\newtheorem{thma}{Theorem} 
\newtheorem{thmaa}{Theorem} 
\newtheorem{thm}{Theorem}[section]
\newtheorem{cor}[thm]{Corollary}
\newtheorem{lem}[thm]{Lemma}
\newtheorem{prop}[thm]{Proposition}
\theoremstyle{definition}
\newtheorem{defn}[thm]{Definition}
\theoremstyle{remark}
\newtheorem{rem}[thm]{Remark}
\newtheorem{con}[thm]{Convention}
\DeclareMathOperator{\Aut}{Aut}
\DeclareMathOperator{\Out}{Out}
\DeclareMathOperator{\Inn}{Inn}
\DeclareMathOperator{\diam}{diam}
\DeclareMathOperator{\CV}{CV}
\DeclareMathOperator{\cv}{cv}
\DeclareMathOperator{\st}{st}
\DeclareMathOperator{\val}{val}
\DeclareMathOperator{\PF}{PF}
\DeclareMathOperator{\rk}{rk}
\DeclareMathOperator{\BCC}{BCC}
\DeclareMathOperator{\Isom}{Isom}
\title{non-rigidity of cyclic automorphic orbits in free groups}
\author{Brian Ray}
\address{Department of Mathematics, University of Illinois at Urbana-Champaign, 1409 West Green Street, Urbana, IL 61801, USA}
\email{ray8@illinois.edu}
\subjclass[2000]{20F65}
\thanks{The author acknowledges support from National Science Foundation grants 
DMS 0838434 ÓEMSW21MCTP: Research Experience for Graduate StudentsÓ, and DMS-0904200.}
\begin{document}

\begin{abstract}
We say a subset $\Sigma \subseteq F_N$ of the free group of rank $N$ is \emph{spectrally rigid} if whenever $T_1, T_2 \in \cv_N$ are $\mathbb{R}$-trees in (unprojectivized) outer space for which $\| \sigma \|_{T_1} = \| \sigma \|_{T_2}$ for every $\sigma \in \Sigma$, then $T_1 = T_2$ in $\cv_N$.  The general theory of (non-abelian) actions of groups on $\mathbb{R}$-trees establishes that $T \in \cv_N$ is uniquely determined by its translation length function $\| \cdot \|_T \colon F_N \to \mathbb{R}$, and consequently that $F_N$ itself is spectrally rigid.  Results of Smillie and Vogtmann \cite{MR1182503}, and of Cohen, Lustig, and Steiner \cite{MR1105334} establish that no finite $\Sigma$ is spectrally rigid.  Capitalizing on their constructions, we prove that for any $\Phi \in \Aut(F_N)$ and $g \in F_N$, the set $\Sigma = \{ \Phi^n(g) \}_{n \in \mathbb{Z}}$ is not spectrally rigid.
\end{abstract}

\maketitle

\section{Introduction}\label{Introduction}

The notion of a ``marked length spectrum'' initially arose in the context of Riemannian and metric geometry, specifically in the setting of a group $G$ acting by isometries on a metric space $X$ with some negative curvature properties.  We view such an action as a homomorphism $\rho \colon G \to \Isom(X)$.  To each nontrivial $g \in G$, we associate the minimal displacement of the isometry $\rho(g) \colon X \to X$, thus obtaining a function $\| \cdot \|_\rho \colon G \to \mathbb{R}_{\geq 0}$ defined by $\| g \|_\rho := \inf \{ d(x,gx), x \in X \}$.  This situation often appears in the context of Riemannian geometry.  If $M$ is a closed connected Riemannian manifold of (not necessarily constant) negative curvature, then the Riemannian metric $\rho$ determines a homomorphism $\rho \colon \pi_1(M) \to \Isom(\widetilde{M})$, where $\widetilde{M}$ is the universal cover of $M$.  In this situation, the minimal displacement function $\| \cdot \|_\rho \colon \pi_1(M) \to \mathbb{R}_{\geq 0}$ is called the \emph{marked length spectrum} of $\rho$.  The extent to which the marked length spectrum of $\rho$ determines the isometry type of $(M,\rho)$ is at the center of much research (see, for example, the introduction in \cite{2010arXiv1001.1729K}).

The classic \emph{marked length spectrum rigidity conjecture} asserts that knowing the marked length spectrum of $\rho$ --- where $\rho$ is a smooth Riemannian metric of negative curvature on a closed manifold, $M$ --- does in fact determine the isometry type of $(M,\rho)$.  In settings where the marked length spectrum rigidity conjecture holds, one can ask whether or not the isometry type of $(M,\rho)$ is determined by the restriction $\| \cdot \|_\rho \colon H \to \mathbb{R}_{\geq 0}$ where $H$ is a \emph{proper} subset of $\pi_1(M)$.  Naturally, we say such an $H$ is a ``spectrally rigid'' subset of $\pi_1(M)$.

In this paper, we consider the setting where $G = F_N$, a finitely generated free group, and $X$ is an $\mathbb{R}$-tree.  More specifically, we work in \emph{Culler-Vogtmann outer space}, denoted $\cv_N$, which is the space of all free minimal discrete isometric actions of $F_N$ on $\mathbb{R}$-trees, up to $F_N$-equivariant isometry.  Equivalently, such a $T \in \cv_N$ can be described by means of its quotient metric graph $G = T / F_N$ (with marking isomorphism $F_N \cong \pi_1(G)$), where we record the lengths of the edges in $G$ so that $T$ is identified with its universal cover $\widetilde{G}$.  For each $T \in \cv_N$, we have a \emph{translation length function} $\| \cdot \|_T \colon F_N \to \mathbb{R}_{\geq 0}$.  As with the marked length spectrum associated to Riemannian manifolds, the translation length function is a class function, and $\| g \|_T$ is equal to the translation length of the isometry $g \colon T \to T$.

Following Kapovich \cite{2010arXiv1001.1729K}, we define a subset $\Sigma \subseteq F_N$ to be \emph{spectrally rigid} if whenever $T_1, T_2 \in \cv_N$ are such that $\|\sigma\|_{T_1} = \|\sigma\|_{T_2}$ holds for every $\sigma \in \Sigma$, it follows that $T_1 = T_2$ in $\cv_N$.  Results of Smillie and Vogtmann \cite{MR1182503} and Cohen, Lustig, and Steiner \cite{MR1105334} show that no finite subset of $F_N$ is spectrally rigid; however, since the action of $F_N$ on $T$ is free it is non-abelian ($\| ghg^{-1} h^{-1} \|_T \neq 0$, for some $g, h \in F_N$), and so the translation length function uniquely determines the action (see \cite[\S3, Theorem 4.1]{MR1851337}).  Thus $F_N$ itself is spectrally rigid.  Motivated by these results, Kapovich \cite{2010arXiv1001.1729K} initiated the search for (non) spectrally rigid infinite but ``sparse'' subsets of $F_N$, proving in \cite{2010arXiv1001.1729K} for $N \geq 2$ that almost every trajectory of a simple non-backtracking random walk is spectrally rigid.  Furthermore, Carette, Francaviglia, Kapovich, and Martino \cite{2011arXiv1106.0688F}, show that for $N \geq 2$, the set of primitive elements in $F_N$ is spectrally rigid.  They also show in \cite{2011arXiv1106.0688F} that for any subgroup $H \leq \Aut(F_N)$ $(N \geq 3)$ which projects to an infinite normal subgroup in $\Out(F_N)$, the orbit $Hg$ of any nontrivial $g \in F_N$ is spectrally rigid.  Kapovich \cite{2010arXiv1001.1729K} remarks without proof that in the case of an atoroidal fully irreducible automorphism $\Phi \in \Aut(F_N)$ ($N \geq 3$), one can show --- using a previous result of Smillie and Vogtmann \cite{MR1182503} together with train track tecnhiques --- that for any nontrivial $g \in F_N$, the orbit $\langle \Phi \rangle g$ is not spectrally rigid.  These results naturally lead to the following question, posed in \cite{2011arXiv1106.0688F}: Is it true that for any $H \leq \Aut(F_N)$, either for all nontrivial $g \in F_N$ the orbit $Hg$ is spectrally rigid, or for all nontrivial $g \in F_N$ the orbit $Hg$ is not spectrally rigid.  Carette, Francaviglia, Kapovich, and Martino conjectured (\cite[Conjecture 7.5]{2011arXiv1106.0688F}) that in the case $H \leq \Aut(F_N)$ is cyclic, the orbit $Hg$ is never spectrally rigid.  Our main result, Theorem \ref{thma}, verifies this conjecture.

\begin{thma}\label{thma}
Let $N \geq 2$.  Let $\Phi \in \Aut(F_N)$.  Let $g \in F_N$ be arbitrary.  Then the set $\Sigma = \{ \Phi^n(g) \}_{n \in \mathbb{Z}}$ is not spectrally rigid.
\end{thma}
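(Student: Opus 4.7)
The plan is to split into two cases based on whether the orbit of conjugacy classes $\{[\Phi^n(g)]\}_{n \in \mathbb{Z}}$ is finite or infinite. Since translation length on a tree $T \in \cv_N$ is a class function, if this orbit is finite then the spectral constraints imposed by $\Sigma$ are identical to those imposed by any finite set of conjugacy-class representatives $\Sigma_0 \subseteq F_N$; the cited results of Cohen--Lustig--Steiner and Smillie--Vogtmann then give non-rigidity immediately. Henceforth assume $\{[\Phi^n(g)]\}_{n \in \mathbb{Z}}$ is infinite.

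The main technical step is to produce, via Bestvina--Handel theory, a suitable (relative) train track representative $f \colon G \to G$ of $\Phi$, with $G$ a marked metric graph defining a simplex $\Delta$ of $\cv_N$. Parametrize $\Delta$ by its edge-length vector $\ell \in \mathbb{R}^{E}_{>0}$, where $E$ is the number of edges of $G$. Writing $v_n \in \mathbb{Z}^{E}_{\geq 0}$ for the edge-frequency vector of the cyclically reduced realization of $\Phi^n(g)$ in $G$, we have $\|\Phi^n(g)\|_{T_\ell} = v_n \cdot \ell$ for every $n \in \mathbb{Z}$. Thus showing non-rigidity within $\Delta$ reduces to showing that the span $V := \mathrm{Span}_{\mathbb{R}}\{v_n : n \in \mathbb{Z}\}$ is a proper subspace of $\mathbb{R}^{E}$: any nonzero $\ell' \perp V$ yields a one-parameter family $T_{\ell + \varepsilon \ell'} \in \Delta$ of trees that are distinct from $T_\ell$ in $\cv_N$ but carry matching translation lengths on $\Sigma$.

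The main obstacle is establishing this rank deficiency. One avenue is to exploit the transition matrix $M$ of $f$ together with the bounded cancellation constant $\BCC$: for a train track map, $v_n$ is closely related to $M^n v_0$ up to error controlled by $\BCC$, and Perron--Frobenius together with Cayley--Hamilton constrain the cyclic $\mathbb{R}[M]$-submodule generated by $v_0$ inside $\mathbb{R}^{E}$. One then selects $G$ so that $E$ exceeds the dimension of this cyclic submodule --- for instance, by working with a non-rose topological representative that carries extra graph symmetries compatible with $f$ --- forcing $\dim V < E$. For polynomially growing $\Phi$, an analogous analysis using the upper-triangular structure of unipotent train tracks yields the same rank drop. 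Handling both growth regimes uniformly, and simultaneously controlling negative iterates $\Phi^{-n}$ (where $f$ itself need not be invertible and a separate train track for $\Phi^{-1}$ must be brought into play), is the most delicate part of the argument.
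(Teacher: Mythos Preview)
Your outline has a genuine gap at its core. The linear-algebra reduction you propose does not go through: the relation $v_n \approx M^n v_0$ holds only up to a bounded additive error coming from cancellation, and a bounded error does \emph{not} force the exact vectors $v_n$ to lie in the cyclic $\mathbb{R}[M]$-submodule generated by $v_0$. A toy example makes this clear: with $E=2$, $M=2I$, $v_0=(1,0)$ one has $M^n v_0=(2^n,0)$ spanning a line, yet $v_n=(2^n,1)$ (a uniformly bounded perturbation) already span all of $\mathbb{R}^2$. So Cayley--Hamilton applied to $M$ tells you nothing about $\dim V$. Your proposed remedy---enlarging $E$ by choosing a non-rose $G$ with extra symmetries---does not address this, and in any case you give no mechanism for producing such a $G$ compatible with a train-track structure for a general $\Phi$. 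The treatment of negative iterates is likewise only gestured at; a train track for $\Phi^{-1}$ lives on a different graph $G'$, so you would need a single simplex accommodating both directions, and you do not indicate how.

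The paper takes a different route that sidesteps the linear-algebra issue entirely. Rather than trying to show rank deficiency on the train-track graph, it proves a purely combinatorial statement (Property~$\mathcal{W}$): there is a free basis $\mathcal{A}$ and a letter $a\in\mathcal{A}$ such that the exponent of any $a$-block in $[[\Phi^n(g)]]_{\mathcal{A}}$ is uniformly bounded over all $n\in\mathbb{Z}$. Establishing this is the real work (Property~$\mathcal{P}$, handled separately for EG and NEG top strata, with the two-sided version obtained by constructing a single primitive $a$ whose realization crosses the top stratum in good relative train tracks for both $\varphi$ and $\varphi^{-1}$). Once Property~$\mathcal{W}$ is in hand, the Smillie--Vogtmann and Cohen--Lustig--Steiner constructions are invoked as black boxes: they build a \emph{specific} marked metric graph (not the train-track graph) on which the bounded-$a$-power condition translates into the desired length-function degeneracy. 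In other words, the graph where the perturbation happens is chosen after, and in response to, the combinatorial bound---not the graph on which the dynamics of $\Phi$ was analyzed.
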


The proof of Theorem \ref{thma} requires two main ingredients: improved relative train track machinery established by Bestvina, Feighn, and Handel (see \cite{MR1765705}, \cite{MR1147956}); and previous results regarding outer space by Smillie and Vogtmann \cite{MR1182503}, and by Cohen, Lustig, and Steiner \cite{MR1105334}.  The latter results show that no finite subset of $F_N$ is spectrally rigid (\cite{MR1182503} handles $N \geq 3$, and \cite{MR1105334} $N=2$).  A critical observation of Kapovich \cite{2010arXiv1001.1729K} reveals that these arguments hold for more general subsets of $F_N$, not necessarily finite.  In particular, we use the argument in \cite{MR1182503} to show if $\Sigma \subseteq F_N$ ($N \geq 3$) satisfies Property $\mathcal{W}$ (see Definition \ref{W}), then $\Sigma$ is not spectrally rigid.  Similarly, we formulate Property $\mathcal{W}^*$ (see Definition \ref{Ws}) and use the arguments in \cite{MR1105334} to show that any subset $\Sigma \subseteq F_2$ which satisfies property $\mathcal{W}^*$ is not spectrally rigid.  We further show that Property $\mathcal{W}^*$ follows from Property $\mathcal{W}$ (see Proposition \ref{WW}).  As mentioned above, obtaining the result for iwip automorphisms is straightforward.  It is the extension of these ideas to non-iwip automorphisms which occupies the majority of this paper.

The main outline of our argument is as follows.  After setting the framework in Section \ref{Preliminaries}, we define (for a given $\Phi \in \Aut(F_N)$) a property (Property $\mathcal{P}$, see Definition \ref{P}) so that if $\Phi \in \Aut(F_N)$ has Property $\mathcal{P}$, then for any $g \in F_N$ the set $\Sigma = \{ \Phi^n(g) \}_{n \in \mathbb{Z}}$ will have Property $\mathcal{W}$, and therefore be non-spectrally rigid.  Verification of Property $\mathcal{P}$ for a given $\Phi \in \Aut(F_N)$ will require detailed analysis of a (relative) train track $f \colon G \to G$ representing $[\Phi] \in \Out(F_N)$.  In Section \ref{EG-Automorphisms}, we handle those $[\Phi]$ that can be equipped with relative train tracks with exponentially growing top stratum; Section \ref{NEG-Automorphisms} handles those with non-exponentially growing top stratum.  In Section \ref{Fully Irreducible Automorphisms} we cover the iwip case in order to elucidate the arguments which are central to the main result.  In particular, we show that given an iwip $\Phi \in \Aut(F_N)$ with train track $f \colon G \to G$, $g \in F_N$, and a primitive $a \in F_N$, there is a uniform bound $M$ so that if $(\tau(a))^k$ occurs as a subword in the reduced form of $f^n(\tau(g))$ (for any $n \geq 1$), then $|k| \leq M$.  Here $\tau \colon R_N \to G$ is the marking.  In Section \ref{EG-Automorphisms} and Section \ref{NEG-Automorphisms}, we show that appropriately modified versions of the above statement hold.  Handling the case $N=2$ involves showing that Property $\mathcal{W}$ suffices for Property $\mathcal{W}^*$, and is deferred to Section \ref{The Case of F2}.

\subsubsection*{Acknowledgements} The author is indebted to his advisor, Ilya Kapovich, for direction, counsel, and financial support.  The author is grateful to Patrick Reynolds and Brent Solie for several invaluable discussions.

\section{Preliminaries}\label{Preliminaries}

\subsection{Graphs, Fundamental Groups, and Automorphisms}\label{Graphs, Fundamental Groups, and Automorphisms}

As for the general objects of geometric group theory, we will  follow the expositions in \cite{MR2396717} and \cite{MR2365352}.  By a \emph{graph}, we mean a five-tuple $G = (V(G), E(G), o,t,-)$, where $V(G)$ is a nonempty set of vertices, $E(G)$ is a set of edges, and $o, t \colon E(G) \to V(G)$, $- \colon E(G) \to E(G)$ are functions whose image of an edge is its origin, terminus, and inverse, respectively.  Furthermore, we require $\overline{\overline{e}} = e, \overline{e} \neq e$, and $o(e) = t(\overline{e})$ for every $e \in E(G)$.  The \emph{$N$-rose}, denoted $R_N$, is the graph with $V(R_N) = \{ \ast \}$ and $\#(E(R_N)) = 2N$.  A \emph{subgraph} $H$ of a graph $G$ is a graph for which $V(H) \subseteq V(G)$, $E(H) \subseteq E(G)$, and the maps $o,t,-$ are restrictions to $E(H)$.  The \emph{star} of a vertex, denoted $\st_G v$ is defined as $\st_G v := \{ e \in E(G) : o(e) = v \}$; the \emph{valency} of $v$, denoted $\val_G v$, is the cardinality of its star.  An \emph{orientation} of $G$ is a decomposition $E(G) = E(G)^+ \sqcup E(G)^-$ where for each pair $\{e,\overline{e}\}$ of mutually inverse edges, exactly one edge belongs to $E(G)^+$.

By an \emph{path}, we mean either a vertex $v \in V(G)$ (in which case $p$ is a \emph{degenerate} path), or a sequence $p = e_0e_1 \cdots e_{n-1}$ of edges with $n \geq 1$ for which $t(e_i) = o(e_{i+1})$ for $0 \leq i \leq n-2$.  The \emph{inverse} of $p$, denoted $\overline{p}$, is defined to be either the vertex $v$ or the path $\overline{p} = \overline{e_{n-1}} \cdots \overline{e_1} \ \overline{e_0}$, depending on whether or not $p$ is degenerate.  By a \emph{subpath}, $p'$ of the path $p = e_0 e_1 \cdots e_{n-1}$, we mean an path $p' = e_i e_{i+1} \cdots e_{i+m}$, where $i \in \{0,1, \dots, n-1\}$ and $m \leq n-1-i$; we write $p' \Subset p$.  The \emph{length} of the path $p = e_0e_1 \cdots e_{n-1}$, denoted $|p|$, is $n$; a degenerate path has length zero.  The set of all paths in $G$ is denoted by $P(G)$.  For $p \in P(G)$ we define $o(p) := o(e_0)$, $t(p) := t(e_{n-1})$.  A \emph{closed} path is one for which $o(p) = t(p)$; in this case, we say $o(p) = t(p)$ is the \emph{basepoint} of the path $p$.  By a \emph{cyclic path}, we mean the equivalence class (under cyclic permutation) of a given closed path.  When dealing with cyclic paths, we modify the definition of subpath accordingly.  For $v \in V(G)$, let $P(G,v) := \{ p \in P(G)  :  o(p) = t(p) = v \}$.  An path is \emph{reduced} if it is \emph{degenerate} or $\overline{e_i} \neq e_{i+1}$ for $0 \leq i \leq n-2$.  A closed path is \emph{cyclically reduced} if it is reduced, and $\overline{e_0} \neq e_{n-1}$.  The process by which one removes subpaths of the form $\overline{e} e$ is called \emph{(cyclic) reduction}.  A \emph{forest} is a graph without nontrivial cyclically reduced paths; a connected forest is a \emph{tree}.  In case $T \subseteq G$ is a tree with $v, v' \in V(T)$, we denote by $[v,v']_T$ the unique path $p \subseteq T$ with $o(p) = v$ and $t(p) = v'$.

We say two paths $p_1, p_2$ are \emph{homotopic (relative endpoints)} if there is an path $p$ which can be obtained from both $p_1$ and $p_2$ by reduction.  Consequently, two such paths satisfy $o(p_1) = o(p_2)$, $t(p_1) = t(p_2)$.  Similarly, we say two cyclic paths $p_1, p_2$ are \emph{freely homotopic} if there is a cyclic path $p$ which can be obtained from both $p_1$ and $p_2$ by cyclic reduction.  For a path $p$, we denote by $[p]_h$ the \emph{homotopy class} of the path $p$; that is, the set of all paths homotopic to $p$ (relative endpoints).  By $[p]$ we mean the reduced form of $p$.  In the case that $p$ is a cyclic path, we denote its \emph{free homotopy class} by $[[p]]_h$, and by $[[p]]$ the equivalence class containing the cyclically reduced form of $p$.  We extend our notation and terminology to elements in $F_N$ as follows.  Given a (not necessarily reduced) word $w \in F_N$ expressed over some free basis $\mathcal{A}$ of $F_N$, consider now the path $w$ in the $N$-rose, $R_N$, where each edge $e \in E(R_N)^+$ is labelled by a basis element $a \in \mathcal{A}$.  Additionally, given a free basis $\mathcal{A}$ of $F_N$, and a (not necessarily reduced) word $w$, we write $[w]_\mathcal{A}$ for the reduced form of $w$ where $w$ is expressed as a word over $\mathcal{A}$, and we write $[[w]]_{\mathcal{A}}$ for the equivalence class containing the cyclically reduced form of $w$ over $\mathcal{A}$.

The \emph{fundamental group} of $G$ with respect to the vertex $v$, $\pi_1(G,v) := \{ [p]_h : p \in P(G,v) \}$.  By a \emph{graph map}, we mean a function $f \colon V(G_1) \cup E(G_1) \to V(G_2) \cup P(G_2)$ which sends $V(G_1)$ to $V(G_2)$, $E(G_1)$ to $P(G_2)$, and for which
\[
f(o(e)) = o(f(e)) \quad \text{and} \quad f(\overline{e}) = \overline{f(e)}
\]
for every $e \in E(G_1)$.  We write $f \colon G_1 \to G_2$; if $v_1 \in V(G_1)$ with $f(v_1) = v_2 \in V(G_2)$, we sometimes write $f \colon (G_1,v_1) \to (G_2,v_2)$ for emphasis. We extend graph maps to paths $p \in P(G_1)$ as follows:
\[
f(p) = f(e_0 e_1 \cdots e_{n-1}) := f(e_0) f(e_1) \cdots f(e_{n-1})
\]
If for each $e \in E(G_1)$, we have that $f(e)$ is a nondegenerate reduced path, then we say that the map $f$ is \emph{tight}.  For a graph map $f \colon (G_1,v_1) \to (G_2,v_2)$ with $G_1, G_2$ connected, there is an induced homomorphism $f_\# \colon \pi_1(G_1,v_1) \to \pi_1(G_2,v_2)$ given by the rule $[p]_h \mapsto [f(p)]_h$ for $p \in P(G_1,v_1)$.  In the case that $f_\#$ is an isomorphism, we say $f$ is a \emph{homotopy equivalence}, and that the isomorphism $f_\#$ is \emph{realized} by the homotopy equivalence $f$.  

Given a graph map $f \colon (G,v) \to (G,f(v))$, there is a family of induced homomorphisms $f^u_\# \colon \pi_1(G,v) \to \pi_1(G,v)$ given by $[p]_h \mapsto [u f(p) \overline{u}]_h$ where $u$ is any path with $o(u) = v$, $t(u) = f(v)$.  If $f^u_\# \in \Aut(\pi_1(G_1,v))$ for some $u$, then $f^u_\# \in \Aut(\pi_1(G_1,v))$ for all $u$.  Furthermore, $f^u_\#$ and $f^{u'}_\#$ differ only by conjugation and thus represent the same outer automorphism of $\pi_1(G_1,v)$; we denote this outer automorphism by $f_{\circledast}$.  

By a \emph{marked graph} we mean a pair $(G,\tau)$, where $G$ is a graph and $\tau \colon R_N \to G$ is a homotopy equivalence.  Choose a homotopy equivalence $\sigma \colon G \to R_N$ so that $(\tau \circ \sigma)_\# \in \Inn(\pi_1(G,v))$ (for some choice of $v$) and $(\sigma \circ \tau)_\# \in \Inn(\pi_1(R_N)) = \Inn(F_N)$.  To a given homotopy equivalence $f \colon G \to G$, we associate $(\sigma \circ f \circ \tau)_\circledast \in \Out(F_N)$, the outer automorphism \emph{determined} by $f$.  Given $\varphi \in \Out(F_N)$, we say that a homotopy equivalence $f \colon G \to G$ is a \emph{topological representative} of $\varphi$ with respect to $\tau$ if $f$ determines $\varphi$ and $f$ is tight.  For example, take $\Phi \in \Aut(F_N)$, and let $F_N = F(x_0,x_1, \dots, x_{N-1}), E(R_N)^+ = \{ e_0,e_1, \dots, e_{N-1} \}$.  Write $\Phi(x_i) = w_i(x_0,x_1, \dots, x_{N-1})$.  Let $\tau , \sigma \colon R_N \to R_N$ be the identity graph maps, and $f \colon R_N \to R_N$ be $f(e_i) = p_i(e_0,e_1, \dots, e_{N-1})$ where $p_i$ is obtained from $w_i$ by replacing $x_j$ by $e_j$.  Evidently $f$ is a topological representative of $\varphi$ with respect to the identity marking.

\subsection{Train Tracks}\label{Train Tracks}

We follow \cite{MR2396717}, \cite{MR1147956}, and \cite{MR1765705} for our exposition regarding relative train tracks.  A \emph{turn} in a connected graph $G$ is an unordered pair (say $e, e'$) of (not necessarily distinct) edges in $E(G)$, for which $o(e) = o(e')$.  A turn is \emph{degenerate} if $e=e'$ and \emph{non-degenerate} otherwise.  The set of all turns in $G$ is denoted by $T(G)$.  Given a tight graph map $f \colon G \to G$, we define a function $Df \colon E(G) \to E(G)$ by $Df(e) := e^*$ where $e^*$ is the initial edge in the path $f(e)$.   Define a function $Tf \colon T(G) \to T(G)$ by $Tf(e,e') := (Df(e),Df(e'))$.  A \emph{legal} turn is one for which $(Tf)^n(e,e')$ are non-degenerate for all $n \geq 0$; a turn is \emph{illegal} if it is not legal.  We say that an path $p = e_0 e_1 \cdots e_{n-1}$ is \emph{legal} if the turns $\overline{e_i},e_{i+1}$ are legal for $0 \leq i \leq n-2$.  These notions culminate in the following definition.

\begin{defn}[Train track map]\label{tt}
A graph map $f \colon G \to G$ is called a \emph{train track map} if for every $e \in E(G)$, $f(e)$ is nondegenerate and legal.
\end{defn}

\begin{rem}\label{ttr}
For every $e \in E(G)$, a train track map satisfies $[f^n(e)] = f^n(e)$, for all $n \geq 1$.
\end{rem}

Let $f \colon G \to G$ be a graph map.  Choose an orientation for $G$, and write $E(G)^+ = \{ e_0, e_1, \dots, e_{k-1} \}$.  We define a $k \times k$ matrix called the \emph{transition matrix} of $f$, denoted $M(f)$, as follows: $(M(f))_{ij}$ equals the number of occurrences of $e_i$ or $\overline{e_i}$ in $f(e_j)$.  We say that $\varphi \in \Out(F_N)$ is \emph{reducible} if there is a free factorization $F_N = F^0 \ast \cdots \ast F^{l-1} \ast H$, with $l \geq1$ and $1 \leq \rk (F^0) < N$, so that $\varphi$ permutes the conjugacy classes of the $F^i$; we allow $H = 1$.  Otherwise, we say $\varphi$ is \emph{irreducible}.  There is an alternate definition given in terms of irreducibility of transition matrices (see \cite{MR2396717} or \cite{MR1147956}).    We say that $\varphi \in \Out(F_N)$ is \emph{fully irreducible} (or \emph{irreducible with irreducible powers}, \emph{iwip} for short) if $\varphi^n$ is irreducible for every $n \geq 1$.  Thus $\varphi \in \Out(F_N)$ is not an iwip if and only if there is $n \geq 1$ so that $\varphi^n$ leaves invariant the conjugacy class of a proper free factor of $F_N$.  A result of Bestvina and Handel \cite{MR1147956} states that every irreducible $\varphi \in \Out(F_N)$ is topologically represented by a train track map.  Furthermore, every automorphism can be topologically represented by a improved relative train track map, a notion we recall below.

Let $f \colon G \to G$ be a topological representative for $\varphi \in \Out(F_N)$.  A \emph{filtration} for $f$ is a series of $f$-invariant subgraphs: $\varnothing = G_0 \subsetneq G_1 \subsetneq \cdots \subsetneq G_m = G$.  The \emph{$i^{th}$-stratum}, denoted $H_i$, is defined by $H_i := G_i \backslash G_{i-1}$, whereby (for a graph $G$ and a subgraph $H$) we mean the following:
\[
E(G \backslash H) := E(G) \backslash E(H)
\]
\[
V(G \backslash H) := \{ v \in V(G) : \exists e \in E(G \backslash H) \ \text{for which} \ o(e) = v \}
\]
By abuse of notation, if $S \subset E(G)$, we define graphs $G \backslash S$ and $G \cup S$ in the obvious manner.  A turn for which one edge is in $H_i$ and the other in $G_{i-1}$ is called \emph{mixed}.  A path $p \subseteq G_i$ is called \emph{$i$-legal} if its only illegal turns lie in $G_{i-1}$.  By only considering edges from $E(H_i)$ we obtain the \emph{transition submatrix} for $H_i$, denoted $M_i(f)$.  If $M_i(f)$ is the zero matrix, we say that $H_i$ is a \emph{zero} stratum.  Recall (\cite[Appendix]{MR2396717}) that a non-negative irreducible integer matrix has an associated Perron-Frobenius eigenvalue $\lambda \geq 1$.  If $\PF(M_i(f)) > 1$ (resp. $=1$) we say that $H_i$ is \emph{exponentially-growing} (resp. \emph{non-exponentially-growing}).  We follow \cite{MR1765705} for our definition of a relative train track.

\begin{defn}[Relative train track]\label{rtt}
A topological representative $f \colon G \to G$ of $\varphi \in \Out(F_N)$ is a \emph{relative train track} with respect to the filtration $\varnothing = G_0 \subsetneq G_1 \subsetneq \cdots \subsetneq G_m = G$ if the following criteria hold:
\begin{enumerate}
\item $G$ has no valence one vertices.
\item Each non-zero $M_i(f)$ is irreducible.
\item Each exponentially growing $H_i$ satisfies:
\begin{enumerate}
\item If $e \in E(H_i)$, then $Df(e) \in E(H_i)$.
\item If $p$ is a nontrivial path in $G_{i-1}$ for which $o(p) \in V(H_i)$ and $t(p) \in V(H_i)$, then $[f(p)]$ is nontrivial.
\item If $p$ is a legal path in $H_i$, then $f(p)$ is an $i$-legal path in $G_i$.
\end{enumerate}
\end{enumerate}
\end{defn}

It follows immediately from Definition \ref{rtt} that mixed turns are legal.  Furthermore, we note the following property of relative train tracks.

\begin{lem}[{Bestvina, Feighn, and Handel \cite[Lemma 2.5.2]{MR1765705}}]\label{BFH2.5.2}
Suppose that $f \colon G \to G$ is a relative train track map, that $H_r$ is an exponentially growing stratum and that $\sigma = a_1b_1a_2 \cdots b_l$ is a decomposition of an $r$-legal path into subpaths where each $a_i \subseteq H_r$ and each $b_j \subseteq G_{r-1}$.  (Allow the possibility that $a_1$ or $b_l$ is trivial, but assume that the other subpaths are nontrivial.)  Then
\[
[f(\sigma)] = f(a_1) [f(b_1)] f(a_2) \cdots [f(b_l)]
\]
and $[f(\sigma)]$ is $r$-legal.
\end{lem}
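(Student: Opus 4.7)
The plan is to exploit the three defining properties of the exponentially growing stratum $H_r$ to control what happens at each interface between an $H_r$-subpath and a $G_{r-1}$-subpath, and thereby to show that the reduction of $f(\sigma)$ happens only inside the $G_{r-1}$ blocks. The strategy breaks into four small steps: pin down where $f(a_i)$ begins and ends; show each $[f(b_j)]$ is a nontrivial reduced path in $G_{r-1}$; rule out cancellation across every interface; and then check $r$-legality of the resulting concatenation.

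First, I would observe that for each $e \in E(H_r)$, both the first and last edges of $f(e)$ lie in $H_r$: the first edge by property (3a), and the last edge by applying (3a) to $\overline{e} \in E(H_r)$ and using $f(\overline{e}) = \overline{f(e)}$. Each $a_i$ is a subpath of the $r$-legal path $\sigma$ with all of its turns lying in $H_r$, so each $a_i$ is legal; property (3c) then gives that $f(a_i)$ is $r$-legal in $G_r$, hence already reduced, with first and last edges in $H_r$. Second, since $G_{r-1}$ is $f$-invariant, each $f(b_j)$ is a path in $G_{r-1}$, so $[f(b_j)] \subseteq G_{r-1}$; and whenever $b_j$ is nondegenerate its endpoints coincide with endpoints of adjacent $a_i$'s, hence lie in $V(H_r)$, so property (3b) forces $[f(b_j)]$ to be nontrivial.

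Third, I would analyze a typical interface $f(a_i) \cdot [f(b_i)]$: the last edge of $f(a_i)$ lies in $H_r$ while the first edge of $[f(b_i)]$ lies in $G_{r-1}$; these come from disjoint edge-sets and so cannot be mutually inverse, which rules out cancellation, and the turn they form is mixed and hence legal by the remark following Definition \ref{rtt}. The same analysis applies to every $[f(b_i)] \cdot f(a_{i+1})$ interface (and the boundary cases where $a_1$ or $b_l$ is allowed to be trivial are vacuous). Stringing these observations together yields $[f(\sigma)] = f(a_1)[f(b_1)]f(a_2)\cdots[f(b_l)]$. For the $r$-legality claim: turns interior to each $f(a_i)$ are $r$-legal by (3c); turns interior to each $[f(b_j)]$ lie entirely in $G_{r-1}$ and so are harmless for $r$-legality; and all interface turns are mixed, hence legal.

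The lemma is essentially a bookkeeping exercise rather than a deep argument, and the main obstacle is simply recognizing that the combination of (3a) (forcing $H_r$-edges at both ends of each $f(a_i)$) with the $f$-invariance of $G_{r-1}$ produces the stratum-mismatch that blocks every cancellation. Property (3b) then plays the secondary but essential role of keeping the $[f(b_j)]$ blocks nondegenerate, which prevents adjacent $f(a_i)$'s from merging and possibly creating an illegal turn inside $H_r$.
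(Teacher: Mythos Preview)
The paper does not supply its own proof of this lemma: it is quoted from \cite{MR1765705} and followed only by a \qed. Your argument is correct and is essentially the standard proof one finds in the Bestvina--Feighn--Handel paper itself: use (3a) to pin the first and last edges of each $f(a_i)$ inside $H_r$, use $f$-invariance of $G_{r-1}$ to keep each $[f(b_j)]$ inside $G_{r-1}$, conclude that every interface turn is mixed (hence legal, with no cancellation), and invoke (3b) to keep the interior $[f(b_j)]$ nondegenerate so that consecutive $f(a_i)$'s cannot collide.

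One small wording issue: you assert that for every nondegenerate $b_j$ both endpoints lie in $V(H_r)$, but for the terminal block $b_l$ only the initial endpoint is guaranteed to meet $H_r$. This does not damage the argument, since nontriviality of $[f(b_l)]$ is not needed for the interface analysis or the displayed formula (a degenerate tail is harmless), and (3b) is genuinely required only for the interior $b_j$ with $1 \le j \le l-1$. You might adjust that sentence accordingly.
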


\qed

Furthermore, if we allow ourselves to work with an iterate of  $\varphi$, we can arrange for additional properties, thus obtaining an \emph{improved relative train track} (see \cite{MR1765705}).  As we will not be invoking the full power of the improved relative train track, we only state the properties of interest.

\begin{thm}[{Bestvina, Feighn, and Handel \cite[Theorem 5.1.5]{MR1765705}}]\label{BFH5.1.5}
For every outer automorphism $\varphi$ there is a relative train track map $f \colon G \to G$ representing a positive iterate of $\varphi$ with the following properties.
\begin{enumerate}
\item For every exponentially growing $H_i$, we have $M_i(f)$ is aperiodic; that is, there exists $k \geq 1$ for which all entries in $M_i^k(f)$ are positive.
\item If $H_i$ is a zero stratum, then $H_i$ is not a top stratum (i.e. $G_i \neq G)$.
\item If $H_i$ is a non-exponentially-growing stratum, then
\begin{enumerate}
\item $E(H_i) = e_0$, a single edge.
\item $f(e_0) = e_0 u$ where $u$ is a closed path in $G_{i-1}$ whose basepoint $t(e_0) = o(u) = t(u)$ is fixed by $f$.
\end{enumerate}
\end{enumerate}
\end{thm}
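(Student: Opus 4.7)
The starting point is the Bestvina--Handel relative train track theorem \cite{MR1147956}, which already produces for each $\varphi \in \Out(F_N)$ a relative train track representative $f_0 \colon G_0 \to G_0$ satisfying Definition \ref{rtt}. The plan is to modify $(f_0, G_0)$ by a finite sequence of elementary operations --- subdivision at preimages of vertices, tightening and folding to remove cancellation, collapsing $f$-invariant forests contained in $G_{i-1}$, and splitting strata by inserting single-edge strata at new filtration levels --- combined with replacement of $\varphi$ by a suitable positive iterate, so that each of the three additional properties is established stratum by stratum, working from the bottom up.

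Property (1) is the softest of the three. For an exponentially growing $H_i$, condition (3)(a) of Definition \ref{rtt} ensures that iterating $f_0$ does not mix $H_i$ with strictly higher strata at the level of transition submatrices, so the transition submatrix of $f_0^k$ on $H_i$ is exactly $M_i(f_0)^k$. Since $M_i(f_0)$ is non-negative and irreducible with $\PF(M_i(f_0))>1$, the Perron--Frobenius theory recalled in \cite[Appendix]{MR2396717} produces $k_i$ such that $M_i(f_0)^{k_i}$ has all entries positive. Taking a common multiple across the finitely many exponentially growing strata, and noting that all three clauses of Definition \ref{rtt}(3) are preserved under iteration of $f_0$, yields (1).

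Property (3) uses the fact that a non-negative irreducible integer matrix with Perron--Frobenius eigenvalue $1$ is a permutation matrix. After passing to a further power, every such permutation becomes the identity, so each edge $e \in E(H_i)$ of a non-exponentially-growing stratum satisfies $Df(e) = e$ and $f(e) = e\cdot u_e$ for some $u_e \subseteq G_{i-1}$. A separation-of-variables move promotes each edge of $H_i$ to its own one-edge stratum, reordering the filtration so that (3)(a) holds. Iterating $f$ then shows that $t(e_0)$ eventually lands in a finite periodic set of vertices, and passing to another iterate makes it genuinely fixed, which forces the tail $u = u_{e_0}$ to be a closed path at $t(e_0)$, giving (3)(b). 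For property (2), a top zero stratum $H_m$ would consist of edges carried entirely into $G_{m-1}$ by $f$ and not traversed by the image of any other edge; such edges can be absorbed or deleted (using that $G$ has no valence-one vertices, Definition \ref{rtt}(1)) to give a strictly shorter filtration, contradicting maximality.

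The main obstacle is coordinating these moves. Any single correction designed to fix one condition in one stratum --- say promoting edges to single-edge strata for (3) --- can easily introduce new illegal turns higher up, break the $i$-legality condition (3)(c) of Definition \ref{rtt} for an exponentially growing stratum above, or reintroduce a zero top stratum that was just eliminated. The argument in \cite{MR1765705} navigates this by a carefully staged induction on the filtration together with a bookkeeping invariant (involving counts of illegal turns, stratum-wise edge lengths, and growth rates under iteration) that strictly decreases under each corrective move. Verifying the termination of this induction while simultaneously preserving all three listed properties across every stratum is the substantive technical work, and is where I expect the bulk of the effort to lie.
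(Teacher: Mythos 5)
The paper offers no proof of this statement: it is quoted (in weakened form, as the author notes) from Bestvina--Feighn--Handel \cite[Theorem 5.1.5]{MR1765705}, so there is no internal argument to compare yours against. You are in effect sketching a proof of a theorem whose actual proof occupies a large portion of the BFH paper, and your sketch contains one concrete error and one structural gap.

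The error is in your treatment of property (1). A non-negative irreducible integer matrix with Perron--Frobenius eigenvalue greater than $1$ need \emph{not} be primitive, so Perron--Frobenius theory alone does not produce $k_i$ with $M_i(f_0)^{k_i}$ all positive: the matrix $M=\left(\begin{smallmatrix}0&2\\2&0\end{smallmatrix}\right)$ is irreducible with $\PF(M)=2$, yet no power of it is positive. If $M_i(f_0)$ has period $p>1$, then $M_i(f_0)^{p}$ is a \emph{reducible} block matrix, so arranging aperiodicity forces you both to pass to a power and to refine the filtration, splitting $H_i$ into several exponentially growing strata corresponding to the primitive diagonal blocks --- exactly the kind of move that can disturb conditions already arranged elsewhere, feeding the coordination problem you flag at the end. (A smaller issue of the same flavor occurs in (3): $Df(e)=e$ only tells you $f(e)$ \emph{begins} with $e$ after you have separately arranged that the unique occurrence of $e^{\pm 1}$ in $f(e)$ is an initial, unbarred one, which requires an orientation/sliding adjustment you do not describe; and your property (2) argument appeals to "contradicting maximality" when no maximality is assumed --- the correct move is that a top zero stratum forces $H_m$ to lie in a collapsible invariant forest since $f$ is a homotopy equivalence with image in $G_{m-1}$.) The structural gap is that the coordination problem \emph{is} the proof: you correctly identify that a staged induction with a strictly decreasing complexity invariant is needed, but you neither define the invariant nor verify that any of your moves decreases it, so what you have is a strategy outline rather than a proof. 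For the purposes of this paper the right course is the one the author takes: cite \cite[Theorem 5.1.5]{MR1765705} and do not reprove it.
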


\begin{defn}[Good relative train track]\label{grtt}
A relative train track satisfying the conclusion of Theorem \ref{BFH5.1.5} will be called a \emph{good relative train track}.  
\end{defn}

\begin{rem}\label{iterate}
If $f \colon G \to G$ is a good relative train track representing $\varphi^n$ for $n \geq 1$, then $f^k \colon G \to G$ is a good relative train track representing $\varphi^{nk}$.
\end{rem}

By an \emph{EG-automorphism} we mean an automorphism $\varphi$ for which there exists $n \geq 1$ so that $\varphi^n$ can be equipped with a good relative train track with exponentially-growing top stratum.  We similarly define a \emph{NEG-automorphism}.  Note that iwip automorphisms are EG-automorphisms, but the converse does not hold.

\subsection{Culler-Vogtmann outer space}\label{Culler-Vogtmann outer space}

There are multiple equivalent descriptions of Culler-Vogtmann outer space.  Our subsequent definition of spectral rigidity treats outer space as a space of actions on trees.  The expositions in \cite{MR1182503} and \cite{MR1105334}, however, lend themselves nicely to the description of outer space in terms of marked metric graphs.  We will briefly describe both notions, explaining how one can move easily between the two.  We will follow \cite{MR1950871} and \cite{2010arXiv1001.1729K}.

\emph{Culler-Vogtmann outer space}, denoted by $\cv_N$, is the space of free minimal discrete isometric actions of $F_N$ on $\mathbb{R}$-trees, up to equivalence, where $T_1 \sim T_2$ if there exists an $F_N$-equivariant isometry $f \colon T_1 \to T_2$.  By \emph{projectivized Culler-Vogtmann outer space}, we mean the subset $\CV_N \subseteq \cv_N$ consisting of those trees $T$ for which the quotient metric graph $T / F_N$ has volume 1.  For each $g \in F_N$ and $T \in \cv_N$, we define \emph{translation length}, denoted $\|g\|_T$, by $\|g\|_T := \inf \{ d(x,gx) \colon x \in T \}$.  This gives a \emph{translation length function} $\| \cdot \|_T \colon F_N \to \mathbb{R}_{\geq 0}$.  We remark that $T \in \cv_N$ is uniquely determined (up to $F_N$-equivariant isometry) by $\| \cdot \|_T \colon F_N \to \mathbb{R}_{\geq 0}$ (see \cite[\S3, Theorem 4.1]{MR1851337}).

Alternatively, a point in $\cv_N$ is the equivalence class of a triple $(G,\tau,l)$ where
\begin{enumerate}
\item $G$ is a marked graph with respect to the marking $\tau \colon R_N \to G$.
\item For every $v \in V(G)$, we have $\val_G v \geq 3$.
\item Each $e \in E(G)$ is assigned a positive real number, its \emph{length}, denoted $l(e)$.  We require $l(e) = l(\overline{e})$.
\end{enumerate}

We refer to such a triple as a \emph{marked metric graph structure}.  Condition 3 allows us to treat $G$ as a metric space via the path metric.  The equivalence relation is as follows: $(G,\tau,l) \sim (G',\tau',l')$ if there is an isometry $h \colon G \to G'$ for which the following holds: there exists an path $u$ with $o(u) = (h \circ \tau)(\ast)$ and $t(u) = \tau'(\ast)$ for which $(h \circ \tau)_\# = c_u \circ \tau'_\#$.  Here $c_u \colon \pi_1(G',\tau'(\ast)) \to \pi_1(G',(h \circ \tau)(\ast))$ is $[p]_h \mapsto [u p \overline{u}]_h$.

Given such a $(G,\tau,l)$, the marking gives an isomorphism $F_N \cong \pi_1(G)$ and thus induces an action of $F_N$ on $\widetilde{G}$ by deck transformation.  This action is free, minimal, discrete, and by isometries, the metric on $\widetilde{G}$ being that lifted from $G$.  Furthermore, $\| g \|_T$ is the length of the shortest loop in the free homotopy class determined by $g$ in the quotient metric graph $T / F_N$.

\subsection{Bounded Cancellation}\label{Bounded Cancellation}

Suppose $f \colon G \to G$ and $f' \colon G' \to G'$ are relative train tracks for $\varphi$ and $\varphi^{-1}$, respectively.  In Section \ref{EG-Automorphisms}, we construct a primitive $a \in F_N$, so that the realizations $[[\tau(a)]]$ and $[[\tau'(a)]]$ in $G$ and $G'$, respectively, satisfy certain properties.  Verification of these properties relies on our ability to control cancellation in a manner we make precise in Proposition \ref{graph1}.  A result of Cooper \cite{MR916179} ensures this is possible.  We state a version best fit for our purposes.

\begin{lem}[{Bestvina, Feighn, and Handel \cite[Lemma 2.3.1]{MR1765705}}]\label{BFH2.3.1}
For any homotopy equivalence $f \colon G \to G'$ of marked graphs there is a constant $\BCC(f) \geq 0$ for which the following holds.  If $p = \alpha \beta$ is a path in $G$, then $[f(p)]$ is obtained from $[f(\alpha)]$ and $[f(\beta)]$ by concatenating and by cancelling $c \leq \BCC(f)$ edges from the terminal end of $[f(\alpha)]$ with $c$ edges from the initial end of $[f(\beta)]$.
\end{lem}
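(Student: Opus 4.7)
The plan is to pass to universal covers and reinterpret the cancellation as a distance-to-geodesic in a tree, then bound it by exploiting that a homotopy equivalence of finite graphs lifts to a quasi-isometry. First I would lift $f$ to an $F_N$-equivariant map $\tilde f\colon \tilde G \to \tilde G'$, noting that $\tilde G$ and $\tilde G'$ are simplicial $\mathbb{R}$-trees under the path metrics inherited from $G$ and $G'$. Given a path $p = \alpha\beta$, choose a lift $\tilde p = \tilde\alpha\tilde\beta$ and let $P, Q, R$ be respectively the initial vertex, the midpoint, and the terminal vertex of $\tilde p$; set $A := \tilde f(P)$, $B := \tilde f(Q)$, $C := \tilde f(R)$. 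Since $\tilde G'$ is a tree, the geodesics $[A,B]$ and $[B,C]$ share a common initial segment emanating from $B$ of length
\[
c \;=\; \tfrac{1}{2}\bigl(d(A,B) + d(B,C) - d(A,C)\bigr) \;=\; d\bigl(B,\,[A,C]\bigr),
\]
and this number $c$ is exactly the amount cancelled when $[f(\alpha)]$ and $[f(\beta)]$ are concatenated and tightened to $[f(p)]$. Thus it suffices to bound $d(B, [A,C])$ uniformly over all choices of $p$ and $\tilde p$.

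Two ingredients would do this. On the one hand, $\tilde f$ is Lipschitz: $f$ sends each edge to a path in $G'$ of length at most $L_0 := \max_{e \in E(G)} |f(e)|$, so $d_{\tilde G'}(\tilde f(x),\tilde f(y)) \le L_0\, d_{\tilde G}(x,y)$. On the other hand, since $f$ is a homotopy equivalence one can fix a homotopy inverse $h\colon G' \to G$ together with cellular homotopies $hf \simeq \mathrm{id}_G$ and $fh \simeq \mathrm{id}_{G'}$; lifting these $F_N$-equivariantly and using compactness of $G$ and $G'$ produces a constant $D \ge 0$ such that $d_{\tilde G}(x,\,\tilde h\tilde f(x)) \le D$ for all $x \in \tilde G$, and symmetrically on $\tilde G'$. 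Combined with the Lipschitz constant for $\tilde h$, this promotes $\tilde f$ to a $(\lambda,\mu)$-quasi-isometry between trees for explicit constants depending only on $f$.

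The last step would be the elementary Morse lemma for trees: any $(\lambda,\mu)$-quasi-geodesic in an $\mathbb{R}$-tree lies within Hausdorff distance at most $R = R(\lambda,\mu)$ of the actual geodesic joining its endpoints, since the quasi-isometric lower bound forbids a quasi-geodesic from penetrating deeply into a ``dead-end'' branch. Because $\tilde p$ is reduced and $\tilde G$ is a tree, $\tilde p$ is itself a geodesic through $Q$, and hence $\tilde f(\tilde p)$ is a $(\lambda,\mu)$-quasi-geodesic in $\tilde G'$ from $A$ to $C$ passing through $B$. Therefore $d(B, [A,C]) \le R$, and one may take $\BCC(f) := \lceil R \rceil$. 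I expect the main obstacle to be the second step: producing honest quasi-isometry constants for $\tilde f$ from the homotopy data, which is conceptually routine but requires careful bookkeeping of the lifted homotopy track-lengths against the edge-length metrics on $G$ and $G'$. The tree Morse lemma and the initial translation of cancellation into $d(B,[A,C])$ are essentially immediate once one is inside an $\mathbb{R}$-tree.
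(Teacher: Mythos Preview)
The paper does not supply its own proof of this lemma: the statement is quoted from Bestvina--Feighn--Handel \cite[Lemma~2.3.1]{MR1765705} (with the underlying idea attributed to Cooper \cite{MR916179}) and is stated without argument. So there is nothing in the paper to compare your proposal against.

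That said, your approach is correct and is essentially Cooper's original argument. The translation of the cancellation $c$ into the tree quantity $d(B,[A,C])$ is exactly right, the lift of a homotopy equivalence between finite graphs to an equivariant quasi-isometry of universal covers is routine (your sketch via a homotopy inverse and bounded homotopy tracks is the standard route), and the Morse lemma in a $0$-hyperbolic space finishes it. Two minor remarks. First, your $Q$ should be the concatenation vertex $t(\tilde\alpha) = o(\tilde\beta)$ rather than the literal midpoint of $\tilde p$; this is clearly what you intend, but the word ``midpoint'' is misleading. Second, your argument uses that $\tilde p$ is a geodesic, i.e.\ that $p$ itself is reduced. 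This is the intended reading of the lemma --- the conclusion fails for arbitrary paths, as $\alpha = \overline{\beta}$ with $|\alpha|$ large shows --- and it is precisely how the lemma is invoked later in the paper (see the proof of Lemma~\ref{red-prim}).
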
  \qed

\subsection{Spectral Rigidity}\label{Spectral Rigidity}

We follow the exposition in \cite{2010arXiv1001.1729K}.  

\begin{defn}[Spectrally rigid]\label{Spectrally rigid}
We say $\Sigma \subseteq F_N$ is \emph{spectrally rigid} if whenever $T_1, T_2 \in \cv_N$ are such that $\|g\|_{T_1} = \|g\|_{T_2}$ for every $g \in \Sigma$, then $T_1 = T_2$ in $\cv_N$.
\end{defn}

In \cite{MR1182503}, Smillie and Vogtmann proved for $N \geq 3$ that no finite subset of $F_N$ is spectrally rigid.  It was remarked by Kapovich \cite{2010arXiv1001.1729K} that their arguments apply to a more general $\Sigma \subseteq F_N$, not necessarily finite.  In particular, if $\Sigma \subseteq F_N$ satisfies a``weak aperiodicity'' property, then $\Sigma$ is not spectrally rigid.  We will refer to this as property $\mathcal{W}$ (see Definition \ref{W} and Proposition \ref{SV}).  The argument in \cite{MR1182503} involved constructing a particular free basis $\mathcal{A}$ of $F_N$, and a particular marked metric graph $G$ with $\pi_1(G) \cong F_N$, so that the realization (in $G$) of each $\sigma \in \Sigma$ satisfy certain properties.  These properties allow us to perturb the length data of $G$ in a way that does not change the translation lengths of elements in $\Sigma$, yet does produce different trees in outer space.  We remark that these marked metric graphs have volume 1, and so represent points in $\CV_N$.

\begin{defn}[Property $\mathcal{W}$]\label{W}
Let $\Sigma \subseteq F_N$.  We say $\Sigma$ has \emph{property} $\mathcal{W}$ if there exist a free basis $\mathcal{A}$ of $F_N$, $a \in \mathcal{A}$, and $M \geq 1$ so that for any $\sigma \in \Sigma$, if $a^k \Subset [[\sigma]]_\mathcal{A}$ then $|k| \leq M$.
\end{defn}

In view of the argument in \cite{MR1182503}, the following is immediate.

\begin{prop}\label{SV}
Suppose $N \geq 3$, and $\Sigma \subseteq F_N$ has property $\mathcal{W}$.  Then $\Sigma$ is not spectrally rigid.
\end{prop}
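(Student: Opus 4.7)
My plan is to observe that the construction of Smillie and Vogtmann in \cite{MR1182503} uses the finiteness of $\Sigma$ only to guarantee a uniform bound on the powers of a distinguished basis element appearing in cyclically reduced forms, and this is exactly what property $\mathcal{W}$ provides. Consequently the same construction, applied verbatim, should prove the statement.

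Concretely I would proceed as follows. Using the basis $\mathcal{A} = \{a, x_1, \ldots, x_{N-1}\}$ and constant $M$ supplied by property $\mathcal{W}$, identify $F_N$ with $\pi_1(R_N)$ via the marking sending $a$ and the $x_i$ to the corresponding petals. I would then invoke the explicit construction from \cite{MR1182503}, which produces a particular marked graph $G$ (with $\pi_1(G) \cong F_N$ via a prescribed marking) together with a one-parameter family $\{\ell_t\}_{t \in I}$ of length functions on its edges determining pairwise distinct points of $\cv_N$. The hypothesis $N \geq 3$ enters here in providing enough flexibility in $G$ to carry out the perturbation so that distinct values of $t$ give inequivalent trees.

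The core computation carried out in \cite{MR1182503} expresses the translation length $\|w\|_{(G,\ell_t)}$ of any cyclically reduced word $w$ over $\mathcal{A}$ as a sum of contributions along the cyclic loop representing $w$ in $G$; the $t$-dependent part of this sum is arranged to vanish whenever no subword of the form $a^k$ with $|k|>M$ occurs in $[[w]]_\mathcal{A}$. Applying this to each $\sigma \in \Sigma$, property $\mathcal{W}$ forces $\|\sigma\|_{(G,\ell_t)}$ to be independent of $t$. Choosing any $t_1 \neq t_2$ in $I$ therefore yields two distinct trees $T_1, T_2 \in \cv_N$ with $\|\sigma\|_{T_1} = \|\sigma\|_{T_2}$ for every $\sigma \in \Sigma$, so $\Sigma$ is not spectrally rigid.

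The main obstacle, and indeed the only real work, is the construction of $G$ and the family $\{\ell_t\}$, together with the verification that the length contributions have the claimed $t$-dependence and that $(G,\ell_{t_1})$ and $(G,\ell_{t_2})$ are separated in $\cv_N$ (the latter being confirmed by computing the translation length of a word like $a^K$ with $K$ large, which does detect $t$). All of this is carried out in \cite{MR1182503}; since that argument never invokes finiteness of $\Sigma$ beyond the uniform power bound, property $\mathcal{W}$ suffices to conclude.
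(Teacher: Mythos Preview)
Your proposal is correct and matches the paper's approach exactly: the paper simply declares the proposition immediate from the argument in \cite{MR1182503}, having already remarked (following Kapovich) that finiteness of $\Sigma$ is used there only to obtain the uniform power bound that property $\mathcal{W}$ supplies. Your write-up is, if anything, more detailed than the paper's own treatment, which gives no proof beyond the citation.
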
 \qed

Cohen, Lustig, and Steiner \cite{MR1105334} showed that no finite subset of $F_2$ is spectrally rigid.  Broadly speaking, their argument was similar to that in \cite{MR1182503}; however, it was necessary to perturb the volumes of the quotient graphs.  Definition \ref{Spectrally rigid} allows for such a modification since $(G,\tau, l) \neq (G,\tau,\lambda l)$ in $\cv_N$ for $\lambda \neq 1$.  Our investigation of the argument in \cite{MR1105334} reveals that property $\mathcal{W}$ does indeed suffice for non spectral rigidity in the case $N = 2$.  To make this connection clear, we introduce an intermediate property (property $\mathcal{W}^*$ (see Definition \ref{Ws}).  We defer the details to Section \ref{The Case of F2}.

\section{Property $\mathcal{P}$}\label{}

We begin with a definition which is central to the entire paper.  For convenience, we write $\mathbb{N} = \{0,1,2,\dots\}$.

\begin{defn}[Property $\mathcal{P}$]\label{P}
Given $\Phi \in \Aut(F_N)$, we say $\Phi$ has \emph{property} $\mathcal{P}^{+}(\mathcal{A},a)$ if there exist a free basis $\mathcal{A}$ of $F_N$ and $a \in \mathcal{A}$, so that for any $g \in F_N$, there exists $M = M_g \geq 1$ so that if $a^k \Subset h \in \{ [[\Phi^n(g)]]_\mathcal{A} \}_{n \in \mathbb{N}}$ then $|k| \leq M$.  We say $\Phi$ has property $\mathcal{P}^{-}(\mathcal{A},a)$ if the above holds with $\mathbb{N}$ replaced by $-\mathbb{N}$, and we say $\Phi$ has property $\mathcal{P}(\mathcal{A},a)$ if the above holds with $\mathbb{N}$ replaced by $\mathbb{Z}$.  We say $\Phi$ has $\mathcal{P}$, (respectively $\mathcal{P}^+, \mathcal{P}^-$) if $\Phi$ has $\mathcal{P}(\mathcal{A},a)$, (respectively $\mathcal{P}^+(\mathcal{A},a), \mathcal{P}^-(\mathcal{A},a)$) for some pair $(\mathcal{A},a)$.  We say $\varphi \in \Out(F_N)$ has $\mathcal{P}(\mathcal{A},a)$ (respectively $\mathcal{P}^+(\mathcal{A},a), \mathcal{P}^-(\mathcal{A},a)$) if for some (equivalently, for all) $\Psi \in \varphi$, we have that $\Psi$ has $\mathcal{P}(\mathcal{A},a)$ (respectively $\mathcal{P}^+(\mathcal{A},a), \mathcal{P}^-(\mathcal{A},a)$).  Finally, we say $\varphi$ has $\mathcal{P}$, (respectively $\mathcal{P}^+, \mathcal{P}^-$) if $\varphi$ has $\mathcal{P}(\mathcal{A},a)$, (respectively $\mathcal{P}^+(\mathcal{A},a), \mathcal{P}^-(\mathcal{A},a)$) for some pair $(\mathcal{A},a)$.
\end{defn}

\begin{rem}\label{pos-neg}
If $\Phi$ has both $\mathcal{P}^+(\mathcal{A},a)$, and $\mathcal{P}^-(\mathcal{A},a)$, then $\Phi$ has $\mathcal{P}(\mathcal{A},a)$.
\end{rem}

Definition \ref{P} and Definition \ref{W} immediately yield the following proposition.

\begin{prop}\label{PW}
If $\varphi \in \Out(F_N)$ has $\mathcal{P}$, then for any $\Phi \in \varphi$ and for any $g \in F_N$ the set $\Sigma = \{ \Phi^n(g) \}_{n \in \mathbb{Z}}$ has $\mathcal{W}$.
\end{prop}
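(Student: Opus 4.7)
The plan is to observe that Property $\mathcal{P}$ is essentially already an iterated (and outer-class invariant) version of Property $\mathcal{W}$ applied to a single $g$, so the proof should amount to unpacking definitions plus one standard observation about how conjugate elements interact with cyclic reduction.

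First I would record the basic fact underlying the parenthetical in Definition \ref{P}: if $\Psi, \Phi \in \varphi$, then $\Psi = i_h \circ \Phi$ for some $h \in F_N$, and an easy induction gives that $\Psi^n(g)$ and $\Phi^n(g)$ are conjugate in $F_N$ for every $n \in \mathbb{Z}$. Since conjugation does not change the cyclically reduced form over the basis $\mathcal{A}$, we have
\[
[[\Psi^n(g)]]_{\mathcal{A}} = [[\Phi^n(g)]]_{\mathcal{A}} \quad \text{for every } n \in \mathbb{Z}.
\]
This justifies the ``equivalently, for all'' clause in the definition and lets me transfer Property $\mathcal{P}(\mathcal{A},a)$ from whichever representative witnesses it to the given $\Phi$.

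Next, fix an arbitrary $\Phi \in \varphi$ and an arbitrary $g \in F_N$. Since $\varphi$ has $\mathcal{P}$, there exist a free basis $\mathcal{A}$ of $F_N$ and $a \in \mathcal{A}$ such that $\Phi$ has $\mathcal{P}(\mathcal{A},a)$. Applying this to our chosen $g$ produces a constant $M := M_g \geq 1$ with the property that for every $n \in \mathbb{Z}$ and every $k$ for which $a^k \Subset [[\Phi^n(g)]]_{\mathcal{A}}$, one has $|k| \leq M$.

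Finally, $\Sigma = \{\Phi^n(g)\}_{n \in \mathbb{Z}}$, so every $\sigma \in \Sigma$ has the form $\Phi^n(g)$ for some $n$; hence the conclusion of the previous step says precisely that if $a^k \Subset [[\sigma]]_{\mathcal{A}}$, then $|k| \leq M$. This is exactly Property $\mathcal{W}$ for $\Sigma$, witnessed by the triple $(\mathcal{A}, a, M)$. The only step that is not a pure restatement is the conjugacy invariance in the first paragraph, and even that is standard and requires no control beyond noting that cyclic reduction is a conjugacy invariant; so there is no real obstacle here — the proposition is essentially a definition chase that pins down why the quantifier ``$\forall g\ \exists M_g$'' in $\mathcal{P}$ is strong enough to yield the uniform bound ``$\exists M\ \forall \sigma$'' in $\mathcal{W}$ once one fixes the single element $g$ generating the orbit $\Sigma$.
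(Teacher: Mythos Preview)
Your proof is correct and follows the same approach as the paper, which simply states that the proposition is immediate from Definitions~\ref{W} and~\ref{P} and gives no further argument. You have supplied the definition chase in full (including the conjugacy-invariance remark that justifies the ``for some, equivalently for all'' clause in Definition~\ref{P}), but there is no substantive difference in strategy.
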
 \qed

As indicated in Theorem \ref{BFH5.1.5}, good relative train tracks often represent a positive iterate of a given outer automorphism.  The following lemma and its corollaries ensure that this does not pose a problem.

\begin{lem}\label{ptp}
Suppose $\Psi \in \Aut(F_N)$ has $\mathcal{P}$ and $\Phi^l = \Psi$ for some $l \geq 1$.  Then $\Phi$ has $\mathcal{P}$.
\end{lem}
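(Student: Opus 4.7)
The plan is to use the same pair $(\mathcal{A},a)$ that witnesses $\mathcal{P}$ for $\Psi$ and to show it also witnesses $\mathcal{P}$ for $\Phi$, by reducing each iterate of $\Phi$ to an iterate of $\Psi = \Phi^l$ applied to one of finitely many ``base'' elements. The only content is division with remainder in $\mathbb{Z}$ together with the observation that $[[\cdot]]_\mathcal{A}$ is a conjugacy invariant.

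More precisely, fix $(\mathcal{A},a)$ such that $\Psi$ has $\mathcal{P}(\mathcal{A},a)$: for every $h \in F_N$ there is $M_h \geq 1$ with the property that $a^k \Subset [[\Psi^q(h)]]_\mathcal{A}$ (for some $q \in \mathbb{Z}$) forces $|k| \leq M_h$. Given any $g \in F_N$, form the finite family
\[
g_r := \Phi^r(g), \qquad r = 0, 1, \ldots, l-1,
\]
and set $M_g := \max_{0 \leq r \leq l-1} M_{g_r}$. This $M_g$ is the constant I will attach to $g$ for $\Phi$.

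For arbitrary $n \in \mathbb{Z}$, write $n = ql + r$ with $q \in \mathbb{Z}$ and $0 \leq r \leq l-1$. Since $\Phi^l = \Psi$, automorphism composition gives $\Phi^n(g) = \Phi^{ql}\bigl(\Phi^r(g)\bigr) = \Psi^q(g_r)$, and therefore $[[\Phi^n(g)]]_\mathcal{A} = [[\Psi^q(g_r)]]_\mathcal{A}$. Hence whenever $a^k \Subset [[\Phi^n(g)]]_\mathcal{A}$ we also have $a^k \Subset [[\Psi^q(g_r)]]_\mathcal{A}$, and property $\mathcal{P}(\mathcal{A},a)$ for $\Psi$ applied to $g_r$ yields $|k| \leq M_{g_r} \leq M_g$. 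Since $g$ and $n$ were arbitrary, $\Phi$ has $\mathcal{P}(\mathcal{A},a)$, and in particular $\mathcal{P}$.

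There is no real obstacle here: the argument is a one-line euclidean division once one notices that the family $\{g_0,\dots,g_{l-1}\}$ is finite, so the maximum $M_g$ exists. The same proof pattern will also handle the one-sided variants $\mathcal{P}^+$ and $\mathcal{P}^-$ after one observes that $n \geq 0$ forces $q \geq 0$ in the decomposition $n = ql + r$ with $0 \leq r < l$ (and symmetrically for $n \leq 0$), so the corresponding corollaries go through verbatim.
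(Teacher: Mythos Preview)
Your proof is correct and is essentially identical to the paper's own argument: both use division with remainder to write $\Phi^n(g)=\Psi^q(\Phi^r(g))$ with $0\le r<l$, then take the maximum of the finitely many constants $M_{\Phi^r(g)}$ furnished by $\mathcal{P}(\mathcal{A},a)$ for $\Psi$. Your aside about the conjugacy invariance of $[[\cdot]]_\mathcal{A}$ is harmless but unnecessary here, since $\Phi^n(g)$ and $\Psi^q(g_r)$ are literally equal; your remark on the one-sided variants matches the paper's Corollary for $\mathcal{P}^\pm$.
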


\begin{proof}
For any $m \in \mathbb{Z}$, write $m = ql + r$ with $0 \leq r < l$.  Then for $g \in F_N$, we have
\[
\Phi^m (g) = \Phi^{ql + r} (g) = \Phi^{ql} ( \Phi^r (g) ) = \Psi^q ( \Phi^r (g) )
\]
Let $\Gamma = \{ \Phi^r(g) \}_{r=0}^{l-1}$.  Say $\Psi$ has $\mathcal{P} = \mathcal{P}(\mathcal{A},a)$.  For each $\Phi^r(g) \in \Gamma$, we obtain $M_r$ for which
\[
\text{if} \quad a^k \Subset h \in \{ [[ \Psi^n(\Phi^r(g)) ]]_{ \mathcal{A}} \}_{n \in \mathbb{Z}} \quad \text{then} \quad |k| \leq M_r
\]
Set $M' := \max_{0 \leq r < l-1} \{ M_r \}$.  Thus,
\[
\text{if} \quad a^k \Subset h \in \{ [[ \Phi^n(g) ]]_{\mathcal{A}} \}_{n \in \mathbb{Z}} \quad \text{then} \quad |k| \leq M'
\]
\end{proof}

Lemma \ref{ptp} immediately implies the following.

\begin{cor}\label{ptp-one-sided}
Suppose $\Psi \in \Aut(F_N)$ has $\mathcal{P}^{+}$ (respectively $\mathcal{P}^{-}$) and $\Phi^l = \Psi$ for some $l \geq 1$.  Then $\Phi$ has $\mathcal{P}^{+}$ (respectively $\mathcal{P}^{-}$).
\end{cor}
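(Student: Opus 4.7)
The plan is to adapt the proof of Lemma \ref{ptp} verbatim, observing that the decomposition $n = ql + r$ with $0 \leq r < l$ preserves the sign of $n$: if $n \geq 0$ then $q \geq 0$, and if $n \leq 0$ then $q \leq 0$ (this is automatic from the choice of remainder range, since $q = (n-r)/l$ with $0 \leq r < l$). Suppose first that $\Psi$ has $\mathcal{P}^{+}(\mathcal{A},a)$. For $n \in \mathbb{N}$, writing $n = ql + r$ gives $q \in \mathbb{N}$, so the identity
\[
\Phi^n(g) = \Psi^q(\Phi^r(g))
\]
realizes each $\Phi^n(g)$ ($n \in \mathbb{N}$) as $\Psi^q(h)$ for some $q \in \mathbb{N}$ and some $h$ drawn from the finite set $\Gamma = \{ \Phi^r(g) : 0 \leq r \leq l-1 \}$.

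Next, I apply $\mathcal{P}^{+}$ of $\Psi$ to each of the $l$ elements $h \in \Gamma$, obtaining constants $M_0,\dots,M_{l-1}$ bounding the exponents $k$ for which $a^k \Subset [[\Psi^q(h)]]_\mathcal{A}$ for all $q \in \mathbb{N}$. Setting $M := \max_r M_r$ yields a uniform bound valid for all $n \in \mathbb{N}$, which is exactly $\mathcal{P}^{+}(\mathcal{A},a)$ for $\Phi$. The $\mathcal{P}^{-}$ case is entirely symmetric: for $n \in -\mathbb{N}$ the quotient $q$ is non-positive by the sign-preservation remark above, so $\mathcal{P}^{-}$ of $\Psi$ applied to each $\Phi^r(g) \in \Gamma$ produces the required constants in the same way.

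I anticipate no real obstacle; this is pure bookkeeping on the signs of quotients under integer division. The one point worth isolating explicitly is that the division algorithm with remainder in $\{0,1,\dots,l-1\}$ forces the quotient to inherit the sign of the dividend, so each of the one-sided conclusions ($\mathcal{P}^{+}$ or $\mathcal{P}^{-}$) for $\Psi$ translates cleanly into the corresponding one-sided conclusion for $\Phi$ without ever invoking the other half of the property.
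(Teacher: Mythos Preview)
Your proposal is correct and is precisely the argument the paper has in mind: the paper records only that the corollary is immediate from Lemma~\ref{ptp}, meaning that the same division-algorithm argument goes through once one notes that the quotient $q$ inherits the sign of $n$. You have simply made that implicit step explicit, and your sign check is accurate in both directions.
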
  \qed

Lemma \ref{ptp} and Definition \ref{P} imply the following two corollaries.

\begin{cor}\label{ptp-out}
Suppose $\psi \in \Out(F_N)$ has $\mathcal{P}$ and $\varphi^l = \psi$ for some $l \geq 1$.  Then $\varphi$ has $\mathcal{P}$.
\end{cor}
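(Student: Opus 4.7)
The plan is to bootstrap from Lemma \ref{ptp}, which already handles the corresponding statement at the level of $\Aut(F_N)$. The outer‐automorphism version follows by simply lifting to $\Aut(F_N)$, invoking the automorphism version, and then descending using the equivalence built into Definition \ref{P}.

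In more detail, first I would pick any lift $\Phi \in \varphi \subseteq \Aut(F_N)$, i.e.\ an automorphism representing the outer class $\varphi$. Then $\Phi^l \in \Aut(F_N)$ is a lift of $\varphi^l = \psi$, so $\Phi^l \in \psi$. By hypothesis $\psi$ has $\mathcal{P}$, and Definition \ref{P} tells us that this is equivalent to \emph{every} $\Psi \in \psi$ having $\mathcal{P}$. In particular, $\Phi^l$ has $\mathcal{P}$.

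Next I would apply Lemma \ref{ptp} directly with $\Psi := \Phi^l$, which yields that $\Phi$ has $\mathcal{P}$. Finally, invoking the same "for some (equivalently, for all) $\Psi \in \varphi$" clause of Definition \ref{P} in the other direction shows that the outer automorphism $\varphi$ itself has $\mathcal{P}$, completing the proof.

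The argument is essentially a bookkeeping exercise; there is no serious obstacle. The only subtlety worth flagging is making sure one uses the "equivalently, for all" half of Definition \ref{P} (rather than only the "for some" half) in the step where we conclude that the specific lift $\Phi^l$ has $\mathcal{P}$, since Lemma \ref{ptp} needs the hypothesis applied to the particular automorphism $\Phi^l$ and not just to some unspecified lift of $\psi$. An analogous statement for $\mathcal{P}^+$ and $\mathcal{P}^-$ would follow by the same argument using Corollary \ref{ptp-one-sided} in place of Lemma \ref{ptp}.
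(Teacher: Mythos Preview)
Your proposal is correct and matches the paper's approach exactly: the paper simply records that Corollary \ref{ptp-out} follows from Lemma \ref{ptp} together with Definition \ref{P}, which is precisely the lift--apply--descend argument you wrote out.
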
 \qed

\begin{cor}\label{ptp-one-sided-out}
Suppose $\psi \in \Out(F_N)$ has $\mathcal{P}^{+}$ (respectively $\mathcal{P}^{-}$) and $\varphi^l = \psi$ for some $l \geq 1$.  Then $\varphi$ has $\mathcal{P}^{+}$ (respectively $\mathcal{P}^{-}$).
\end{cor}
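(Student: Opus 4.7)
The plan is to mimic the derivation of Corollary \ref{ptp-out} from Lemma \ref{ptp}, but starting instead from the one-sided version Corollary \ref{ptp-one-sided}. The only ingredient that was needed to pass from the $\Aut$-level statement to the $\Out$-level statement in the two-sided case was the fact (built into Definition \ref{P}) that having $\mathcal{P}$ is invariant under passing between representatives of a given outer automorphism, and Definition \ref{P} asserts this invariance equally for $\mathcal{P}^{+}$ and $\mathcal{P}^{-}$.

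Concretely, I would argue as follows. Suppose $\psi \in \Out(F_N)$ has $\mathcal{P}^{+}$ and $\varphi^{l} = \psi$ for some $l \geq 1$. Pick any $\Phi \in \varphi$. Then $\Phi^{l} \in \varphi^{l} = \psi$, so since $\psi$ has $\mathcal{P}^{+}$, by the ``for some (equivalently, for all)'' clause of Definition \ref{P} the particular representative $\Phi^{l} \in \psi$ has $\mathcal{P}^{+}$. Setting $\Psi := \Phi^{l}$ and applying Corollary \ref{ptp-one-sided} to $\Phi$ and this $\Psi$ yields that $\Phi$ itself has $\mathcal{P}^{+}$. One more appeal to Definition \ref{P} promotes this back to the statement that $\varphi$ has $\mathcal{P}^{+}$. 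The $\mathcal{P}^{-}$ case is identical, replacing every occurrence of $\mathcal{P}^{+}$ by $\mathcal{P}^{-}$ and invoking the $\mathcal{P}^{-}$ half of Corollary \ref{ptp-one-sided}.

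There is no genuine obstacle here; the statement is a formal consequence of the preceding lemma and corollary together with the definition. The only subtle point to keep straight is that Definition \ref{P} bundles the choice of basis $(\mathcal{A},a)$ into the property, but since $\mathcal{P}^{+}$ (without specifying a pair) only requires the existence of \emph{some} such pair, and Corollary \ref{ptp-one-sided} supplies the same pair for $\Phi$ as was supplied for $\Psi = \Phi^{l}$ (indeed, the proof of Lemma \ref{ptp} shows that the bound $M' = \max_{0 \leq r \leq l-1} M_{r}$ works with the pair $(\mathcal{A},a)$ coming from $\Psi$), the existential quantifier on $(\mathcal{A},a)$ poses no difficulty.
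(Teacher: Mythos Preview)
Your argument is correct and is precisely the unpacking the paper intends: the paper merely asserts that this corollary follows from Lemma \ref{ptp} and Definition \ref{P}, and your proof does exactly that, routing through the intermediate Corollary \ref{ptp-one-sided} and the representative-independence clause of Definition \ref{P}. Your parenthetical remark about the pair $(\mathcal{A},a)$ being preserved is a useful clarification but not an additional ingredient.
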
 \qed

As $1 \in \Aut(F_N)$ has $\mathcal{P}$, we also obtain the following.

\begin{cor}\label{fin-ord}
If $\varphi \in \Out (F_N)$ is such that $\varphi^l = 1 \in \Out(F_N)$, then $\varphi$ has $\mathcal{P}$.
\end{cor}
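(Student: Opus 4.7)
The plan is to apply Corollary \ref{ptp-out} with $\psi = 1 \in \Out(F_N)$, so the only thing to verify is that the identity outer automorphism has property $\mathcal{P}$; given this, the hypothesis $\varphi^l = 1$ finishes the argument immediately.

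To confirm that $1 \in \Out(F_N)$ has $\mathcal{P}$, I fix any free basis $\mathcal{A}$ of $F_N$ and any $a \in \mathcal{A}$, and take an arbitrary representative $\Psi \in 1 \in \Out(F_N)$. Such a $\Psi$ is inner, so $\Psi(x) = hxh^{-1}$ for some $h \in F_N$, and therefore $\Psi^n(g) = h^n g h^{-n}$ for every $g \in F_N$ and $n \in \mathbb{Z}$. Passing to cyclically reduced forms over $\mathcal{A}$ kills the conjugating factors, so
\[
[[\Psi^n(g)]]_{\mathcal{A}} = [[g]]_{\mathcal{A}} \qquad \text{for all } n \in \mathbb{Z}.
\]
Hence the family $\{[[\Psi^n(g)]]_{\mathcal{A}}\}_{n \in \mathbb{Z}}$ is the single cyclic word $w_g := [[g]]_{\mathcal{A}}$. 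Setting $M_g$ to be (say) the word length of $w_g$ over $\mathcal{A}$ plus one, any occurrence $a^k \Subset w_g$ trivially satisfies $|k| \leq M_g$. Thus $\Psi$ has $\mathcal{P}(\mathcal{A},a)$, and since this holds for any representative, $1 \in \Out(F_N)$ has $\mathcal{P}(\mathcal{A},a)$.

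Finally, by Corollary \ref{ptp-out} applied to $\psi = 1$, the hypothesis $\varphi^l = 1$ yields that $\varphi$ has $\mathcal{P}$. There is no substantive obstacle: the content of the corollary is entirely absorbed by the previously established Corollary \ref{ptp-out} together with the observation that cyclic reduction is invariant under conjugation.
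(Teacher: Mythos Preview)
Your proof is correct and follows precisely the paper's approach: the paper simply remarks that $1 \in \Aut(F_N)$ has $\mathcal{P}$ and then invokes the earlier result (Lemma \ref{ptp}/Corollary \ref{ptp-out}) to conclude. You have merely supplied the easy details behind the assertion that the identity has $\mathcal{P}$, which the paper leaves implicit.
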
  \qed

We'll prove Theorem \ref{thma} by showing that for every $\Phi \in \Aut(F_N)$, there is some $l \geq 1$ so that $\Phi^l$ has $\mathcal{P}$ and then apply Lemma \ref{ptp}, Proposition \ref{PW} and Proposition \ref{SV}.  Incidently, Corollary \ref{fin-ord} establishes Theorem \ref{thma} for those $\Phi \in \Aut(F_N)$ for which $[\Phi]^l = 1 \in \Out(F_N)$ for some $l \geq 1$.  In view of Theorem \ref{BFH5.1.5}, Remark \ref{iterate}, and Lemma \ref{ptp} we establish the following convention.

\begin{con}\label{edge-image}
For an EG-automorphism with good relative train track $f$, we will assume that the $f$ image of any edge from $H_t$ contains all edges in $H_t$.  This is achieved by passing to a power as per Condition 1 in Theorem \ref{BFH5.1.5}.  Furthermore, we will speak of a good relative train track ``representing $\varphi$'' when we actually mean a good relative train track representing an \emph{iterate} of $\varphi$.
\end{con}

The following proposition enables us to use topological representatives and their associated marked graphs in order to verify property $\mathcal{P}$ for a given automorphism.  It makes use of properties of Gromov-hyperbolic spaces; in particular, the $F_N$-equivariant quasi-isometry that exists between universal covers of marked graphs (see, for example, \cite{MR1086649}).

\begin{prop}\label{qi}
Let $\tau \colon R_N \to G$ and $\tau' \colon R_N \to G'$ be marked graphs.  Let $a \in F_N$, and $\Sigma \subseteq F_N$.  Then the following are equivalent.

\begin{enumerate}
\item $\sup \{ |k| : \tau(a)^k \Subset \tau(\sigma), \sigma \in \Sigma \} < \infty$
\item $\sup \{ |k| : \tau'(a)^k \Subset \tau'(\sigma), \sigma \in \Sigma \} < \infty$
\end{enumerate}

\end{prop}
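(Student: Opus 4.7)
The plan is to prove the equivalence by passing to universal covers and transferring combinatorial subword occurrences across an $F_N$-equivariant quasi-isometry between them. I will interpret $\tau(a)^k \Subset \tau(\sigma)$ as the cyclic subword occurrence $[[\tau(a)]]^k \Subset [[\tau(\sigma)]]$ in the cyclically reduced realizations, in the sense of Definition \ref{W}.

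The first step is to set up a dictionary between combinatorial subword occurrences in $G$ and geometric segment intersections in $\widetilde{G}$. For $w \in F_N \setminus \{1\}$, write $A_w \subseteq \widetilde{G}$ for the axis of $w$ acting by deck transformation, and $\|w\|_{\widetilde{G}}$ for its simplicial translation length; the dictionary asserts that $[[\tau(a)]]^k \Subset [[\tau(\sigma)]]$ holds if and only if for some $g \in F_N$, the axes $A_\sigma$ and $gA_a$ share a common geodesic segment of length at least $k \cdot \|a\|_{\widetilde{G}}$. This is because every lift of the cyclically reduced loop $[[\tau(a)]]$ to $\widetilde{G}$ lies on a translate of $A_a$, and conversely such a shared segment projects to $k$ consecutive copies of $[[\tau(a)]]$ along the loop $[[\tau(\sigma)]]$ in $G$. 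The analogous dictionary applies verbatim in $\widetilde{G'}$.

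Second, I will fix a homotopy equivalence $h \colon G \to G'$ with $h \circ \tau$ freely homotopic to $\tau'$, together with a quasi-inverse $h' \colon G' \to G$, and lift both to $F_N$-equivariant $(L,L)$-quasi-isometries $\widetilde{h} \colon \widetilde{G} \to \widetilde{G'}$ and $\widetilde{h}' \colon \widetilde{G'} \to \widetilde{G}$ (see \cite{MR1086649}). Because $\widetilde{G}$ and $\widetilde{G'}$ are $\mathbb{R}$-trees and hence $0$-hyperbolic, stability of quasi-geodesics gives a constant $K = K(L)$ so that $\widetilde{h}(A_w)$ lies within Hausdorff distance $K$ of $A_w'$ for every nontrivial $w \in F_N$, and symmetrically for $\widetilde{h}'$.

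The core of the argument is then a segment-transfer estimate: if $A_\sigma \cap g A_a$ contains a segment of length $\ell$ in $\widetilde{G}$, then $A_\sigma' \cap g A_a'$ contains a segment of length at least $\ell/L - C_0$ in $\widetilde{G'}$, for some constant $C_0 = C_0(L, K)$. I will establish this by observing that the $\widetilde{h}$-image of such a segment has endpoints at distance at least $\ell/L - L$ apart in $\widetilde{G'}$, each lying within $K$ of both $A_\sigma'$ and $gA_a'$, and then analyzing the closest-point projections of these endpoints onto the two axes in the $\mathbb{R}$-tree $\widetilde{G'}$. Feeding this back into the dictionary yields a linear bound $k' \geq k \cdot \|a\|_{\widetilde{G}} / (L \|a\|_{\widetilde{G'}}) - O(1)$ on the corresponding power in $G'$, so the failure of condition (1) forces the failure of (2); the symmetric argument via $\widetilde{h}'$ gives the reverse implication. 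The hardest part will be the case analysis itself: ruling out the ``disjoint axes joined by a short bridge'' configuration in $\widetilde{G'}$ when $\ell$ is large (since two far-apart points each close to both axes are incompatible with a short bridge), and when the axes do meet, extracting the claimed quantitative lower bound on the length of their intersection from the tree-projection geometry.
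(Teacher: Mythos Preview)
Your proposal is correct and follows exactly the route the paper indicates: the paper omits the proof entirely, remarking only that it ``makes use of properties of Gromov-hyperbolic spaces; in particular, the $F_N$-equivariant quasi-isometry that exists between universal covers of marked graphs,'' and your argument via axes in $\widetilde{G}$, stability of quasi-geodesics in $0$-hyperbolic spaces, and the resulting linear segment-transfer estimate is precisely a fleshed-out version of that hint.
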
  \qed

\begin{cor}\label{marked-P}
Let $f \colon G \to G$ be a topological representative for $\varphi \in \Out(F_N)$.  Suppose there exist a free basis $\mathcal{A}$ of $F_N$ and $a \in \mathcal{A}$ such that for any $g \in F_N$, there exists $M \geq 1$ so that if $[[\tau(a)]]^k \Subset h \in \{ [f^n(\tau(g))] \}_{n \in \mathbb{N} (\text{resp } - \mathbb{N} )}$ then $|k| \leq M$.  Then $\varphi$ has $\mathcal{P}^{+}(\mathcal{A},a)$ (respectively $\mathcal{P}^{-}(\mathcal{A},a)$).
\end{cor}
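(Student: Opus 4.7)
My plan is to prove the $\mathcal{P}^+(\mathcal{A},a)$ conclusion; the $\mathcal{P}^-$ case is symmetric. The strategy is to reduce to Proposition \ref{qi} (which transfers bounded power-occurrences between any two marked graphs) while bridging the two asymmetries between the hypothesis and the conclusion: the hypothesis concerns linear subpaths in the reduced path $[f^n(\tau(g))]$, while $\mathcal{P}^+(\mathcal{A},a)$ concerns cyclic subwords in $[[\Phi^n(g)]]_{\mathcal{A}}$; and the hypothesis speaks of $f^n \circ \tau$, while the conclusion speaks of $\Phi^n$. I plan to dissolve both by a doubling trick and by identifying cyclic reductions on either side.

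First I will fix a lift $\Phi \in \varphi$ determined by $f$ (with a fixed basepoint path $u$), so that for every $g \in F_N$ the reduced path $[f^n(\tau(g))]$ in $G$ is a conjugate of the reduced realization $\tau(\Phi^n(g))$; in particular the cyclic reductions agree, $[[f^n(\tau(g))]] = [[\tau(\Phi^n(g))]]$. By Definition \ref{P}, the specific choice of lift of $\varphi$ does not affect $\mathcal{P}^+(\mathcal{A},a)$, so working with this $\Phi$ is sufficient. Next, to pass from linear to cyclic occurrences, I will apply the hypothesis to $g^2 \in F_N$. Writing $[f^n(\tau(g))] = v_n \cdot c_n \cdot v_n^{-1}$ with $c_n := [[f^n(\tau(g))]]$ cyclically reduced, the cyclic reducedness of $c_n$ forces $c_n^2$ to be reduced, so $[f^n(\tau(g^2))] = v_n \cdot c_n^2 \cdot v_n^{-1}$. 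Since every cyclic subpath of $c_n$ appears as a linear subpath of $c_n^2$, hence of $[f^n(\tau(g^2))]$, the hypothesis applied at $g^2$ produces a uniform bound $M := M_{g^2}$ on $|k|$ for every cyclic occurrence $[[\tau(a)]]^k \Subset [[f^n(\tau(g))]]$ with $n \in \mathbb{N}$.

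Finally I will invoke Proposition \ref{qi} with the marked graphs $(G,\tau)$ and $(R_N, \mathrm{id})$ and with $\Sigma = \{\Phi^n(g^2) : n \in \mathbb{N}\}$: the bound in $G$ just established transfers to a uniform bound on $|k|$ for linear occurrences $a^k \Subset [\Phi^n(g^2)]_{\mathcal{A}}$. Since $\Phi^n(g^2) = \Phi^n(g)^2$, the reduced word $[\Phi^n(g^2)]_{\mathcal{A}}$ is a conjugate in $F_N$ of $[[\Phi^n(g)]]_{\mathcal{A}}^2$ and so contains $[[\Phi^n(g)]]_{\mathcal{A}}^2$ as a central linear subword; consequently every cyclic subword $a^k \Subset [[\Phi^n(g)]]_{\mathcal{A}}$ appears linearly in $[\Phi^n(g^2)]_{\mathcal{A}}$, and therefore $|k| \leq M$. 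This gives $\mathcal{P}^+(\mathcal{A},a)$. The main obstacle is the careful bookkeeping between linear and cyclic subword occurrences on both the graph and the word side; the doubling trick $g \mapsto g^2$ neutralizes it uniformly, after which Proposition \ref{qi} does the heavy lifting and the conjugation ambiguity relating $f^n \circ \tau$ to $\tau \circ \Phi^n$ vanishes under cyclic reduction.
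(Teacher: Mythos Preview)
Your proof is correct and follows the same route as the paper's: both reduce the corollary to Proposition~\ref{qi} by comparing the marked graph $(G,\tau)$ with the rose $R_N$ equipped with a marking realizing the basis $\mathcal{A}$. The paper's proof is a terse two lines (``invoke Proposition~\ref{qi}''), whereas you spell out the two bookkeeping issues it leaves implicit --- the passage from $f^n\circ\tau$ to $\tau\circ\Phi^n$ via a choice of lift (which disappears under cyclic reduction), and the passage from linear to cyclic subword occurrences via the doubling $g\mapsto g^2$.
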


\begin{proof}
Let $\tau \colon R_N \to G$ be the marking for $G$, and $\tau' \colon R_N \to R_N$ be a homotopy equivalence realizing the basis $\mathcal{A}$ and invoke Proposition \ref{qi}.
\end{proof}

The following Lemma will be useful in our analysis of forward images of top stratum edges for EG-automorphisms.

\begin{lem}\label{eg-L}
Let $N \geq 2$.  Let $\varphi \in \Out(F_N)$ be an EG-automorphism.  Then there is $L \geq 1$ so that for sufficiently large $n$ and for all $e_i, e_j \in E(H_t)$, we have the following: if $w \Subset [f^n(e_i)]$ is such that $w$ contains more than $L$ edges (counted with repetition) from $E(H_t)$, then $e_j \Subset w$.
\end{lem}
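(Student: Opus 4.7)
The plan is to exploit Convention \ref{edge-image}, which guarantees that for each $e \in E(H_t)$ the path $f(e)$ traverses every edge of $H_t$ as a subpath. Accordingly, I would set
$$D := \max_{e \in E(H_t)} \#_{H_t}\bigl(f(e)\bigr),$$
where $\#_{H_t}(\cdot)$ counts the number of $H_t$-edges (with repetition) in a path, and take $L = 2D$. I claim this works uniformly for all $n \geq 2$ and all $e_i, e_j \in E(H_t)$.

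First I would establish by induction that $[f^n(e_i)]$ is $t$-legal for all $n \geq 1$: the base case uses Definition \ref{rtt}(3c) applied to the single edge $e_i$, and the inductive step is immediate from Lemma \ref{BFH2.5.2}, which asserts that $[f(\sigma)]$ is $t$-legal whenever $\sigma$ is. This is what makes the iterated reduction tractable and permits the decomposition below.

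Next, for $n \geq 2$, I would decompose $[f^{n-1}(e_i)]$ into its maximal $H_t$- and $G_{t-1}$-subpaths,
$$[f^{n-1}(e_i)] = a_1 b_1 a_2 b_2 \cdots a_l b_l,$$
(allowing trivial first or last pieces), with each $a_i \subseteq H_t$ and each $b_j \subseteq G_{t-1}$. Applying Lemma \ref{BFH2.5.2} gives
$$[f^n(e_i)] = f(a_1)\,[f(b_1)]\,f(a_2)\,[f(b_2)] \cdots f(a_l)\,[f(b_l)].$$
Writing each $a_i = e^{(i)}_1 \cdots e^{(i)}_{k_i}$ refines this further into an alternating sequence of blocks of two types: single-edge images $f(e)$ with $e \in E(H_t)$, and reduced images $[f(b)]$ with $b \subseteq G_{t-1}$. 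By $f$-invariance of the filtration one has $[f(b)] \subseteq G_{t-1}$, so every occurrence of an $H_t$-edge in $[f^n(e_i)]$ lies inside some $f(e)$-block.

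Finally, any subpath $w \Subset [f^n(e_i)]$ overlaps a consecutive string of these blocks, partially at the two ends and entirely in the middle. Each of the two end blocks contributes at most $D$ edges of $H_t$, so the hypothesis $\#_{H_t}(w) > 2D = L$ forces some fully contained middle block to contribute at least one $H_t$-edge; that block must then be of the form $f(e)$ for some $e \in E(H_t)$ and sits entirely inside $w$. By Convention \ref{edge-image}, $f(e)$ contains every $e_j \in E(H_t)$ as a subpath, so $e_j \Subset w$ as required. There is no substantial obstacle; the point that needs care is the bookkeeping that the $H_t$-edges of $[f^n(e_i)]$ come \emph{only} from the $f(e)$-blocks (via filtration invariance), together with the uniform bound $D$ on the end-block contributions, which together let the counting argument close cleanly.
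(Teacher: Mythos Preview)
Your argument is correct and follows essentially the same approach as the paper: both define the constant as twice $\max_{e\in E(H_t)}|f(e)|_t$ and argue that a subpath $w$ with more than this many $H_t$-edges must fully contain some block $f(e)$ with $e\in E(H_t)$, whence Convention~\ref{edge-image} finishes. The paper phrases the middle step tersely as ``$[f^{-1}(w)]$ contains an edge from $E(H_t)$,'' whereas you spell out the decomposition via Lemma~\ref{BFH2.5.2}, but the content is the same.
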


\begin{proof}
For any $w \Subset [f^n(e)]$, let $|w|_t$ denote the number of top stratum edges (counted with repetition) in $w$.  Let $K := \max_{e \in E(H_T)} \{ |[f(e)]|_t \}$.  Let $L := 2K$.  Then for $w \Subset [f^n(e_i)]$ with $|w|_t \geq L$, we have that $[f^{-1}(w)] \Subset [f^{n-1}(e_i)]$, and $[f^{-1}(w)]$ contains an edge from $E(H_t)$.  Thus $w$ contains all edges from $E(H_t)$.

\end{proof}

\section{Fully Irreducible Automorphisms}\label{Fully Irreducible Automorphisms}

Although the iwip case follows from the result for EG-automorphisms, obtaining the result for EG-automorphisms is more involved, requiring several arguments that are not necessary to handle the iwip case.  In order to focus on the more essential points, we provide a standalone exposition for iwip automorphisms.

\begin{lem}\label{iwip-edge}
Let $N \geq 2$.  Let $\varphi \in \Out(F_N)$ be fully irreducible.  Let $g \in F_N$ be arbitrary.  Let $a \in F_N$ be any primitive element.  Let $\gamma := [[\tau(g)]]$, and $\alpha := [[\tau(a)]]$.  Let $f \colon G \to G$ be a train track map for $\varphi$.  Let $e_0 \in E(G)$ be arbitrary.  Suppose for all $l \geq 1$, there exists $m = m_l \geq 1$ for which $\alpha^l \Subset [f^{m_l}(\gamma)]$.  Then for all $k \geq 1$, there exists $n = n_k$ for which $\alpha^k \Subset f^{n_k}(e_0)$.  
\end{lem}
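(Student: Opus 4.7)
The overall strategy is a two-stage pigeonhole. First, I would use the train-track property to cut $[f^{m_l}(\gamma)]$ into a bounded number of pieces, each of which lives inside a single $f^{m_l}$-image of an edge of $\gamma$; then pigeonhole on those pieces (and, in a second round, on the finitely many edges occurring in $\gamma$) to produce a single edge $e^* \in E(G)$ in whose forward orbit arbitrarily high powers of $\alpha$ already appear. Finally, I would transfer the conclusion from $e^*$ to an arbitrary $e_0 \in E(G)$ via the aperiodicity built into Convention \ref{edge-image}.

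Concretely, write $\gamma = e_{i_0} e_{i_1} \cdots e_{i_{p-1}}$ as a cyclically reduced edge-path in $G$, so $p$ is fixed. Because $f$ is a train-track, Remark \ref{ttr} gives $[f^{m_l}(e_{i_j})] = f^{m_l}(e_{i_j})$, so each $f^{m_l}(e_{i_j})$ is internally legal and already reduced. Consequently, every cancellation that occurs when reducing $f^{m_l}(\gamma)$ to $[f^{m_l}(\gamma)]$ takes place at one of the $p$ cyclic junctions between consecutive edge-images. It follows that $[f^{m_l}(\gamma)]$ decomposes as a cyclic concatenation of at most $p$ pieces, each of which is a subpath of some $f^{m_l}(e_{i_{j'}})$ (where some indices $j'$ may drop out if a junction eats through a whole edge-image).

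Since $\alpha^l$ has length $l \cdot |\alpha|$ and lies in $[f^{m_l}(\gamma)]$, by pigeonhole over the at most $p$ pieces, some piece contains a subpath of $\alpha^l$ of length $\geq l \cdot |\alpha|/p$, and hence contains $\alpha^{k_l}$ for some $k_l \geq \lfloor l/p \rfloor - 2$. This piece sits inside $f^{m_l}(e_{i_{j'}})$ for some index $j'$, so we obtain $\alpha^{k_l} \Subset f^{m_l}(e_{i_{j'}})$. A second pigeonhole on the finite edge set $\{e_{i_0}, \ldots, e_{i_{p-1}}\}$ then yields a single edge $e^* \in E(G)$ and a subsequence $l_1 < l_2 < \cdots$ along which $k_{l_\nu} \to \infty$ and $\alpha^{k_{l_\nu}} \Subset f^{m_{l_\nu}}(e^*)$.

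To transfer this to an arbitrary $e_0$, I would invoke Convention \ref{edge-image}: after replacing $f$ by a suitable iterate, $f(e_0)$ contains every edge of $H_t = G$ and in particular $e^*$ (up to inverse). Because $f(e_0)$ is legal, so is $f^{m_{l_\nu}+1}(e_0) = f^{m_{l_\nu}}(f(e_0))$, and applying $f^{m_{l_\nu}}$ edgewise preserves the subpath relation on legal paths since no cancellation occurs. Hence $f^{m_{l_\nu}}(e^*) \Subset f^{m_{l_\nu}+1}(e_0)$, and so $\alpha^{k_{l_\nu}} \Subset f^{m_{l_\nu}+1}(e_0)$; setting $n_k := m_{l_\nu}+1$ for the least $\nu$ with $k_{l_\nu} \geq k$ gives the desired $n_k$. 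The main technical point to be careful about is the decomposition step: one must verify that, although $\BCC(f^{m_l})$ grows with $l$, the pieces surviving in $[f^{m_l}(\gamma)]$ remain subpaths of the original edge-images $f^{m_l}(e_{i_j})$, so that the pigeonhole argument succeeds without requiring any uniform control of cancellation.
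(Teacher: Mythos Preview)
Your proposal is correct and follows essentially the same strategy as the paper: use the train-track property to decompose $[f^{m_l}(\gamma)]$ into at most $|\gamma|$ pieces, each a subpath of some edge-image $f^{m_l}(e_{i_j})$, pigeonhole to locate a large power of $\alpha$ inside a single such edge-image, and then transfer to an arbitrary $e_0$ via Convention~\ref{edge-image}. The only cosmetic difference is that the paper first separates the pieces into those of bounded versus unbounded length before running the pigeonhole, whereas you pigeonhole directly over all $p$ pieces (and add a harmless second pigeonhole to pin down a single $e^*$); your bookkeeping is if anything slightly more streamlined.
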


\begin{proof}
We may assume that as $j \to \infty$, we have $|[f^j(\gamma)]| \to \infty$.  Otherwise there exists a uniform bound on $|[f^j(\gamma)]|$ and so the set $\{ [f^j (\gamma)] \}_{j \in \mathbb{N}}$ is finite and the Lemma follows vacuously.  Write $\gamma = e_0e_1 \cdots e_{h-1}$.  We have
\[
[f^j(\gamma)] =  [f^j(e_0) f^j(e_1)\cdots f^j(e_{h-1}) ]
\]
by the train track property.  Note that reduction can only occur between the subpaths $f^j(e_i)$ and $f^j(e_{i+1 \bmod h})$.  For each $j$, write
\[
[f^j(\gamma)] = p^j_{i_1} p^j_{i_2} \cdots p^j_{i_l}
\]
where $0 \leq i_1 < i_2 < \cdots < i_r \leq h-1$, and $p^j_{i_q} \Subset f^j(e_q)$.  We may assume $j$ is large enough so that the set $I = \{ i_1, i_2, \dots, i_r \}$ is fixed, as this is eventually true.  For $q \in I$, let $M_q$ be so that $|p^j_{i_q}| \leq M_q$ for all $j$; if no such bound exists, set $M_i := 0$.  Note since $|[f^j(\gamma)]| \to \infty$, there is a $q$ for which $|p^j_{i_q}| \to \infty$.  Set $M := \sum_{q} M_q$.

Note that if $w \Subset [f^j(\gamma)]$ is such that $|w| > M$, then $w$ meets some $p^j_{i_q}$ for which $|p^j_{i_q}| \to \infty$.  Furthermore, if we let $C := \# \{ q \in I : |p^j_{i_q}| \to \infty \}$, then any $w$ with $|w| = M' > M$ meets at least $(M'-M)/C$ of a $p^j_{i_q}$ for which $|p^j_{i_q}| \to \infty$.  As $M, C$ are fixed, given $k$ we choose $l$ so that
\[
\frac{|\alpha^l| - M}{C} \geq |\alpha|^k
\]
Then $\alpha^k \Subset p^j_{i_q} \Subset f^j(e_{i_q})$ for some $q$.  Furthermore for a fixed $e_0 \in E(G)$ and any $q \in I$, if $a^k \Subset f^{n_k}(e_{i_q})$, then $a^k \Subset f^{n_k+1}(e_0)$.

\end{proof}

\begin{lem}\label{iwip-edge-bound}
Let $N \geq 2$.  Let $\varphi \in \Out(F_N)$ be fully irreducible.  Let $a \in F_N$ be any primitive element.  Let $\alpha := [[\tau(a)]]$.  Let $f \colon G \to G$ be a train track map for $\varphi$.  Let $e_0 \in E(G)$ be arbitrary.  Then there exists $k \geq 1$, such that for all $n \geq 1$, we have $\alpha^k \not \Subset f^n(e_0)$.
\end{lem}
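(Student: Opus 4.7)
My plan is to argue by contradiction. Suppose that for every $k\ge 1$ there exists $n_k\ge 1$ with $\alpha^k\Subset f^{n_k}(e_0)$, and take each $n_k$ minimal. Let $\lambda>1$ be the PF-eigenvalue of $M(f)$ and work in the associated PF-metric $|\cdot|_{PF}$ on $G$, for which $|f(p)|_{PF}=\lambda|p|_{PF}$ on every legal path $p$; then $|f^n(e_0)|_{PF}=\lambda^n|e_0|_{PF}$, and hence $n_k\to\infty$. The first step is a structural observation: by Remark~\ref{ttr}, each $f^{n_k}(e_0)$ is a reduced legal path, so every subpath is legal; the case $k=2$ then forces both $\alpha$ and the turn at which $\alpha$ meets itself to be legal. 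Consequently $f^j(\alpha^k)=f^j(\alpha)^k$ is reduced and legal for every $j,k\ge 0$.

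The core of the argument is a pullback-and-pigeonhole procedure. Writing $f^{n_k}(e_0)=f^j(f^{n_k-j}(e_0))$ and iterating the bounded cancellation lemma (Lemma~\ref{BFH2.3.1}) $j$ times produces a subpath $\beta_k^{(j)}\subseteq f^{n_k-j}(e_0)$ whose $f^j$-image covers $\alpha^k$ with bounded PF-trim of order $\BCC(f)/(\lambda-1)$ at each end. Comparing PF-lengths yields $|\beta_k^{(j)}|_{PF}\le k|\alpha|_{PF}/\lambda^j+O(1)$, so setting $j_k:=\lceil\log_\lambda k\rceil$ bounds $|\beta_k^{(j_k)}|_{PF}$ uniformly in $k$. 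Since $G$ is finite there are only finitely many legal paths of bounded PF-length and only finitely many possible boundary-trim data; repeated pigeonhole then produces an infinite subsequence $k_1<k_2<\cdots$ along which $\beta_{k_l}^{(j_{k_l})}$ is a single fixed legal path $\beta^\ast\subseteq G$ and $f^{j_{k_l}}(\beta^\ast)=P\alpha^{k_l}Q$ holds as an exact path equality for fixed bounded legal paths $P,Q$.

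The final step converts this uniform equality into an algebraic contradiction with the iwip hypothesis. For any indices $l_1<l_2$ in the subsequence, applying $f^m$ with $m:=j_{k_{l_2}}-j_{k_{l_1}}$ to the equation at $l_1$ gives $f^m(P)f^m(\alpha)^{k_{l_1}}f^m(Q)=P\alpha^{k_{l_2}}Q$. Careful PF-length bookkeeping (the identity $\lambda^{j_{k_l}}|\beta^\ast|_{PF}=|P|_{PF}+k_l|\alpha|_{PF}+|Q|_{PF}$ must hold for infinitely many $l$) forces $P=Q=1$ and $\lambda^m k_{l_1}=k_{l_2}$, so $f^m(\alpha)^{k_{l_1}}=\alpha^{k_{l_2}}$ as legal paths; passing to cyclic conjugacy classes yields $\varphi^m(a)^{k_{l_1}}=a^{k_{l_2}}$ up to conjugation. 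Since both $a$ and $\varphi^m(a)$ are primitive (automorphisms preserve primitivity), uniqueness of roots in $F_N$ forces $\varphi^m([a])=[a]$ and $k_{l_1}=k_{l_2}$---i.e., some positive iterate of $\varphi$ preserves the conjugacy class of the rank-one proper free factor $\langle a\rangle$ of $F_N$ (using $N\ge 2$), contradicting the iwip hypothesis on $\varphi$. The hard part will be the PF-length bookkeeping that forces $P=Q=1$, which will likely require a further subsequence controlling the fractional parts $\{\log_\lambda k_l\}$; a cleaner but heavier alternative is to recognize the bi-infinite shift-periodic word $\alpha^\infty$ as a leaf of the attracting lamination $\Lambda^+(\varphi)$ and invoke the Bestvina-Feighn-Handel fact that iwip attracting laminations admit no shift-periodic leaves.
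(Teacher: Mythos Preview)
Your overall plan is reasonable, but there is a real gap at the step where you claim the boundary pieces $P,Q$ can be taken bounded and then fixed by a further pigeonhole. When you pull $\alpha^k\Subset f^{n_k}(e_0)$ back by $j$ steps to the minimal edge-subpath $\beta_k^{(j)}\subseteq f^{n_k-j}(e_0)$ with $f^j(\beta_k^{(j)})\supseteq\alpha^k$, the overhang $P$ (respectively $Q$) is a proper prefix (respectively suffix) of the $f^j$-image of a single edge, and hence has PF-length up to order $\lambda^j$. With $j=j_k\sim\log_\lambda k$ this is of order $k$, not $O(1)$. Bounded cancellation is not the right tool here: all paths involved are legal, so there is no cancellation under $f$ at all, and the constant $\BCC(f)/(\lambda-1)$ has no evident role. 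What \emph{is} bounded is $|\beta_k^{(j_k)}|_{PF}$, and that legitimately gives you a fixed $\beta^\ast$; but the position of $\alpha^{k_l}$ inside the growing path $f^{j_{k_l}}(\beta^\ast)$ is not controlled, so there is no finite set of ``boundary-trim data'' to pigeonhole on. Without bounded $P,Q$ your PF-length identity step has nothing to work with either, as you yourself flag. You are then left only with the lamination alternative, which is correct but imports substantial BFH machinery for what should be an elementary lemma.

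The paper avoids all of this with a one-line nesting trick. Rather than stabilising a $\beta^\ast$, it picks $k_1>|\alpha|$ and then, using Lemma~\ref{eg-L} (any sufficiently long subword of some $f^n(e_0)$ contains every edge), chooses $k_2$ so large that the preimage of $\alpha^{k_2}$ inside $f^{n_{k_2}-n_{k_1}-1}(e_0)$ already contains a copy of $e_0$. Since $\alpha^{k_1}\Subset f^{n_{k_1}}(e_0)$, applying $f^{n_{k_1}+1}$ to that copy of $e_0$ yields $f(\alpha)^{k_1}=f(\alpha^{k_1})\Subset\alpha^{k_2}$ directly, with no boundary terms to manage. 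A simple pigeonhole on the $k_1>|\alpha|$ copies of $f(\alpha)$ sitting inside the $|\alpha|$-periodic word $\alpha^{k_2}$ then gives $f(\alpha)^{m_1}=(\sim\!\alpha)^{m_2}$; primitivity of $a$ (hence of $\varphi(a)$) forces $m_1=\pm m_2$ and thus $|f(\alpha)|=|\alpha|$, contradicting $|f(\alpha)|\ge 2|\alpha|$. The final contradiction is a pure length argument---more elementary than the invariant-free-factor route you propose.
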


\begin{proof}
Suppose for contradiction that for all $k \geq 1$, there exists $n = n_k \geq 1$ such that $a^k \Subset f^{n_k}(e_0)$.  Choose $k_1 \geq |\alpha| + 1$.  Let $L$ be as per Lemma \ref{eg-L}.  Then if $w \Subset f^j(e_0)$ is so that $|w| \geq L$, then $w$ contains all edges in $E(G)$.  Let $\lambda_- := \min_{e \in E(G)} \{ |f(e)| \}$, and $\lambda_+ := \max_{e \in E(G)} \{ |f(e)| \}$.  Choose $k_2$ so that 
\[
\lambda_+^{-(n_{k_1}+1)} |\alpha^{k_2}| \geq L
\]
Then $\alpha^{k_2} \Subset f^{n_{k_2}}(e_0)$ is so that
\[
e_0 \Subset f^{-(n_{k_1}+1)}(\alpha^{k_2}) \Subset f^{n_{k_2} -(n_{k_1}+1)}(e_0)
\]
As $\alpha^{k_1} \Subset f^{n_{k_1}}(e_0)$, we have
\[
f(\alpha^{k_1}) \Subset \alpha^{k_2} \Subset f^{n_{k_2}}(e_0)
\]
Since we are working in the forward image of an edge, the train track property gives
\[
f(\alpha^{k_1}) = f(\alpha)^{k_1}
\]
By our choice of $k_1$, the Pigeonhole Principle gives $m_1, m_2 \geq 1$ for which
\[
f(\alpha)^{m_1} = (\sim \alpha)^{m_2}
\]
where $\sim \alpha$ denotes a cyclic permutation of $\alpha$.  As $\alpha$ represents a primitive $a \in F_N$, we conclude that $m_2 = \pm m_1$.  We now note that $f(\alpha)^{m_1} = (\sim \alpha)^{\pm m_1}$ is impossible since $f$ expands path lengths by at least $\lambda_- \geq \#E(G) \geq 2$.

\end{proof}

We immediately obtain the following.

\begin{cor}\label{false}
Let $N \geq 2$.  Let $\varphi \in \Out(F_N)$ be fully irreducible.  Then for any free basis $\mathcal{A}$ of $F_N$, and for any $a \in \mathcal{A}$, $\varphi$ has $\mathcal{P}^{+}(\mathcal{A},a)$.
\end{cor}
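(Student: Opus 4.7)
The plan is to package Lemma \ref{iwip-edge} and Lemma \ref{iwip-edge-bound} into a single contradiction argument and then invoke Corollary \ref{marked-P} to translate the conclusion into the language of Property $\mathcal{P}^{+}$. Fix any free basis $\mathcal{A}$ of $F_N$ and any $a \in \mathcal{A}$; since basis elements are primitive, both iwip-specific lemmas apply to this $a$. Per Convention \ref{edge-image}, I will replace $\varphi$ by a positive iterate $\varphi^l$ that admits a train track map $f \colon G \to G$, and recover the conclusion for $\varphi$ itself at the end via Corollary \ref{ptp-one-sided-out}, whose proof (via Lemma \ref{ptp}) preserves the pair $(\mathcal{A}, a)$. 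Set $\alpha := [[\tau(a)]]$.

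Given $g \in F_N$, set $\gamma := [[\tau(g)]]$ and suppose for contradiction that no $M = M_g$ bounds those exponents $k$ for which $\alpha^k \Subset [f^n(\gamma)]$ as $n$ ranges over $\mathbb{N}$. Since $\gamma$ has finite length, bounded values of $n$ only contribute bounded exponents, so the failure of boundedness must come from arbitrarily large $n$: for every $l \geq 1$ there must exist $m_l \geq 1$ with $\alpha^l \Subset [f^{m_l}(\gamma)]$. This is precisely the hypothesis of Lemma \ref{iwip-edge}, whose conclusion produces, for any fixed edge $e_0 \in E(G)$ and every $k \geq 1$, an integer $n_k$ with $\alpha^k \Subset f^{n_k}(e_0)$. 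This flatly contradicts Lemma \ref{iwip-edge-bound}, closing the contradiction.

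It follows that such an $M_g$ exists for every $g \in F_N$. Any subword of the linear reduced path $[f^n(\tau(g))]$ is also a subword of its cyclic closure $[f^n(\gamma)]$, so the cyclic bound just obtained immediately yields the linear bound required by the hypothesis of Corollary \ref{marked-P}; applying that corollary gives $\varphi^l$ the property $\mathcal{P}^{+}(\mathcal{A}, a)$, and Corollary \ref{ptp-one-sided-out} then transfers the property to $\varphi$. The only non-routine step is the contradiction in the middle paragraph, and even that is transparent once both lemmas are in hand; essentially all the real content sits in Lemma \ref{iwip-edge} and Lemma \ref{iwip-edge-bound}, which is consistent with the paper's remark that the iwip case is strictly simpler than the general EG/NEG situation.
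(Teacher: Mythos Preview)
Your proposal is correct and follows essentially the same route as the paper: combine Lemma \ref{iwip-edge-bound} (bound on powers of $\alpha$ in edge images) with the contrapositive of Lemma \ref{iwip-edge} (transferring unboundedness in the $\gamma$-orbit to unboundedness in an edge image) to obtain the bound $M_g$, then finish with Corollary \ref{marked-P} and Corollary \ref{ptp-one-sided-out}. Your explicit contradiction framing and your remark about linear versus cyclic subwords are minor expository additions, not a different argument.
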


\begin{proof}
Let $f \colon G \to G$ be a train track map for $\varphi$.  By Lemma \ref{iwip-edge-bound}, for any free basis $\mathcal{A}$ of $F_N$, and any $a \in \mathcal{A}$, there is $M \geq 1$ so that if $[[\tau(a)]]^k \Subset f^n(e_0)$ for any $n \geq 1$, then $|k| \leq M$.  Let $g \in F_N$ be arbitrary.  Then by Lemma \ref{iwip-edge}, there is $M' \geq 1$ so that if $[[\tau(a)]]^k \Subset [f^n([[\tau(g)]])]$ for any $n \geq 1$, then $|k| \leq M'$.  Thus by Corollary \ref{marked-P} and Corollary \ref{ptp-one-sided-out}, $\varphi$ has $\mathcal{P}^{+}(\mathcal{A},a)$.

\end{proof}

We can now prove that iwip automorphisms have property $\mathcal{P}$.

\begin{thm}\label{iwip-main}
Let $N \geq 2$.  Let $\varphi \in \Out(F_N)$ be fully irreducible.  Then $\varphi$ has $\mathcal{P}$.
\end{thm}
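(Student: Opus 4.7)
The plan is to leverage Corollary \ref{false} together with Remark \ref{pos-neg}: Corollary \ref{false} already supplies the ``forward'' property $\mathcal{P}^{+}(\mathcal{A},a)$ for every choice of free basis $\mathcal{A}$ and every $a \in \mathcal{A}$, so to upgrade to the two-sided property $\mathcal{P}(\mathcal{A},a)$ it suffices to establish the mirror property $\mathcal{P}^{-}(\mathcal{A},a)$ for the same pair $(\mathcal{A},a)$.

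First I would record the (routine) symmetry observation that if $\varphi \in \Out(F_N)$ is fully irreducible, then so is $\varphi^{-1}$: a proper free factor whose conjugacy class is fixed by $(\varphi^{-1})^n = \varphi^{-n}$ is equally fixed by $\varphi^n$, contradicting full irreducibility of $\varphi$. Hence Corollary \ref{false} applies to $\varphi^{-1}$ as well, and gives that $\varphi^{-1}$ has $\mathcal{P}^{+}(\mathcal{A},a)$ for every $(\mathcal{A},a)$.

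Next I would unwind Definition \ref{P} to notice that saying $\varphi^{-1}$ has $\mathcal{P}^{+}(\mathcal{A},a)$ is literally the same statement as saying $\varphi$ has $\mathcal{P}^{-}(\mathcal{A},a)$: both assert that for each $g \in F_N$ there is a bound $M_g$ on the exponents $k$ with $a^k \Subset [[\varphi^{-n}(g)]]_{\mathcal{A}}$ for $n \in \mathbb{N}$. Consequently $\varphi$ has $\mathcal{P}^{-}(\mathcal{A},a)$ for every $(\mathcal{A},a)$.

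Finally I would combine $\mathcal{P}^{+}(\mathcal{A},a)$ and $\mathcal{P}^{-}(\mathcal{A},a)$ via Remark \ref{pos-neg} (applied to any representative $\Phi \in \varphi$) to conclude $\varphi$ has $\mathcal{P}(\mathcal{A},a)$, and in particular $\mathcal{P}$. There is no real obstacle here; all of the geometric work — the bounded-cancellation/pigeonhole arguments in Lemmas \ref{iwip-edge} and \ref{iwip-edge-bound} — has already been done in Corollary \ref{false}, and the only new ingredient needed to convert the one-sided statement into the two-sided one is the elementary fact that full irreducibility is preserved under inversion.
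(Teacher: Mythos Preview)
Your proposal is correct and follows essentially the same argument as the paper: note that $\varphi$ is fully irreducible if and only if $\varphi^{-1}$ is, apply Corollary~\ref{false} to both to obtain $\mathcal{P}^{+}(\mathcal{A},a)$ and $\mathcal{P}^{-}(\mathcal{A},a)$ for the same pair, and combine via Remark~\ref{pos-neg}.
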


\begin{proof}
Note that $\varphi$ is an iwip if and only if $\varphi^{-1}$ is an iwip.  Thus by Corollary \ref{false} for any free basis $\mathcal{A}$, and for any $a \in \mathcal{A}$, both $\varphi$ and $\varphi^{-1}$ have $\mathcal{P}^{+}(\mathcal{A},a)$; that is, $\varphi$ has both $\mathcal{P}^{+}(\mathcal{A},a)$ and $\mathcal{P}^{-}(\mathcal{A},a)$, and so has $\mathcal{P}(\mathcal{A},a)$.  
\end{proof}

\section{EG-Automorphisms}\label{EG-Automorphisms}

The arguments in Lemma \ref{iwip-edge} also apply, \emph{mutadis mutandis}, to the case where $\varphi$ is an EG-automorphism.  We therefore omit the proof of the following Lemma.

\begin{lem}\label{red-edge}
Let $N \geq 2$.  Let $\varphi \in \Out(F_N)$ be an EG-automorphism and $f \colon G \to G$ a good relative train track.  Let $g \in F_N$ be arbitrary.  Let $a \in F_N$ be any primitive element for which $\alpha := [[\tau(a)]]$ crosses the exponentially growing top stratum.  Let $\gamma := [[\tau(g)]]$.  Let $e_0 \in E(H_t)$ be arbitrary.  Suppose $\forall l \geq 1, \exists m = m_l \geq 1$ for which $\alpha^l \Subset [f^{m_l}(\gamma)]$.  Then $\forall k \geq 1, \exists n = n_k$ for which $\alpha^k \Subset [f^{n_k}(e_0)]$.  
\end{lem}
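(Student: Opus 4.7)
The plan is to adapt the proof of Lemma \ref{iwip-edge} almost verbatim, with modifications to accommodate the fact that $f$ satisfies the train track property only on the top stratum $H_t$ rather than on all of $G$.

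First I would dispose of the trivial case where $|[f^n(\gamma)]|$ stays bounded, in which case $\{[f^n(\gamma)]\}_n$ is finite and the conclusion holds vacuously. Next, by passing to $[f^{n_0}(\gamma)]$ for sufficiently large $n_0$ (which merely reindexes the sequence $m_l$ from the hypothesis), I would reduce to the case where $\gamma$ is $t$-legal: mixed turns are automatically legal, and for a good relative train track any residual illegal turns can be absorbed into $G_{t-1}$-subpaths after enough iterations.

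With $\gamma$ now $t$-legal, write $\gamma = \eta_1 \eta_2 \cdots \eta_h$ as a concatenation of edges. Iterated application of Lemma \ref{BFH2.5.2} yields, for all sufficiently large $n$, a stable block decomposition
\[
[f^n(\gamma)] = q^n_{j_1} q^n_{j_2} \cdots q^n_{j_r}
\]
where each $q^n_{j_q} \Subset [f^n(\eta_{j_q})]$ and the index set $J = \{j_1, \ldots, j_r\} \subseteq \{1, \ldots, h\}$ is independent of $n$. Partition $J$ into indices $J_B$ whose blocks stay uniformly bounded (by some $M_i$) and indices $J_G$ whose blocks tend to infinity; set $M := \sum_{i \in J_B} M_i$ and $C := \# J_G$. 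The pigeonhole argument from Lemma \ref{iwip-edge} then applies verbatim: choosing $l$ so that $(|\alpha^l| - M)/C \geq k |\alpha|$ forces $\alpha^k \Subset q^{m_l}_{j^*} \Subset [f^{m_l}(\eta_{j^*})]$ for some growing index $j^* \in J_G$.

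Since $\alpha$ crosses $H_t$ and $G_{t-1}$ is $f$-invariant, $\eta_{j^*}$ must lie in $E(H_t)$: otherwise $[f^{m_l}(\eta_{j^*})] \subseteq G_{t-1}$ would contain no $H_t$-edges, contradicting $\alpha^k \Subset [f^{m_l}(\eta_{j^*})]$. Finally, by Convention \ref{edge-image}, $f(e_0)$ contains $\eta_{j^*}$ as an edge, and one more application of Lemma \ref{BFH2.5.2} to the $t$-legal path $f(e_0)$ shows that $f^{m_l}(\eta_{j^*})$ appears without cancellation as a subpath of $[f^{m_l+1}(e_0)]$, so $\alpha^k \Subset [f^{m_l+1}(e_0)]$, and we take $n_k := m_l + 1$. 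The main obstacle is the block decomposition step: verifying that reductions at the boundaries between the successive $[f^n(\eta_j)]$ stabilize and do not grow unboundedly with $n$, which is precisely where $t$-legality of $\gamma$ and Lemma \ref{BFH2.5.2} are indispensable. This is the sole substantive departure from the iwip argument.
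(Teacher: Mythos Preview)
The paper omits this proof entirely, asserting only that the argument of Lemma~\ref{iwip-edge} carries over \emph{mutatis mutandis}. Your outline is in that spirit, but step~2---reducing to a $t$-legal $\gamma$ by passing to $[f^{n_0}(\gamma)]$---has a genuine gap. The claim that ``residual illegal turns can be absorbed into $G_{t-1}$-subpaths after enough iterations'' is false in general: if $\gamma$ contains an indivisible Nielsen path for the top stratum, the single illegal $H_t$-turn in that Nielsen path persists under every iterate of $f$, so no $[f^{n_0}(\gamma)]$ is ever $t$-legal.

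The good news is that this reduction is unnecessary, and once it is removed your argument goes through. Writing $\gamma = e_0 e_1 \cdots e_{h-1}$, one always has a decomposition $[f^j(\gamma)] = p^j_0 p^j_1 \cdots p^j_{h-1}$ with each $p^j_i$ a (possibly empty) subpath of $[f^j(e_i)]$; this requires no legality hypothesis, only that free reduction of a concatenation of reduced words assigns each surviving edge to a unique block. Since there are at most $h$ blocks, any subword $\alpha^l \Subset [f^j(\gamma)]$ meets at most $h$ of them and hence contains a segment of length at least $|\alpha^l|/h$ lying inside a single $[f^j(e_i)]$; for $l$ large enough this segment contains $\alpha^k$. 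Because $\alpha$ crosses $H_t$ while $G_{t-1}$ is $f$-invariant, this forces $e_i \in E(H_t)$. Your final step---Convention~\ref{edge-image} together with the $t$-legality of $f(e_0)$ and Lemma~\ref{BFH2.5.2}---then correctly embeds $[f^j(e_i)]$ in $[f^{j+1}(e_0)]$. In short, the fixed bound $h$ on the number of blocks already supplies everything the pigeonhole needs, so neither stabilization of the index set nor $t$-legality of $\gamma$ is required.
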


\qed

Lemma \ref{red-edge-bound} is the analogue of Lemma \ref{iwip-edge-bound}.

\begin{lem}\label{red-edge-bound}
Let $N \geq 2$.  Let $\varphi \in \Out(F_N)$ be an EG-automorphism and $f \colon G \to G$ a good relative train track.  Let $a \in F_N$ be any primitive element for which $\alpha := [[\tau(a)]]$ crosses the exponentially growing top stratum.  Let $e_0 \in E(H_t)$ be arbitrary.  Then there exists $k \geq 1$, such that for all $n \geq 1$, we have $\alpha^k \not \Subset [f^n(e_0)]$.
\end{lem}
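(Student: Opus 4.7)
The plan is to adapt the iwip argument of Lemma \ref{iwip-edge-bound} by working systematically at the level of the top stratum pattern. I would argue by contradiction, supposing that for every $k \ge 1$ there exists $n_k \ge 1$ with $\alpha^k \Subset [f^{n_k}(e_0)]$. Since $e_0 \in E(H_t)$, every iterate $[f^n(e_0)]$ is $t$-legal by induction from Definition \ref{rtt}(3c), and hence so is every subpath; in particular each $\alpha^k$ is $t$-legal, and Lemma \ref{BFH2.5.2} ensures that reductions in forming $[f(p)]$ from $f(p)$ never involve top stratum edges for $t$-legal $p$. Writing $\pi$ for the projection that records the sequence of top stratum edges of a path, and $\tilde f$ for its multiplicative extension to $E(H_t)^*$, this yields the clean identity $\pi([f(p)]) = \tilde f(\pi(p))$; setting $\alpha_t := \pi(\alpha)$ and $c := |\alpha_t| \ge 1$, the hypothesis becomes $\alpha_t^k \Subset \tilde f^{n_k}(e_0)$ for every $k$.

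Next, I would mirror the iwip proof step by step at this top-stratum level. Fixing $k_1 \ge c+1$, letting $L$ be as in Lemma \ref{eg-L}, and setting $\mu := \max_{e \in E(H_t)}|\tilde f^{n_{k_1}+1}(e)|$, I would take $k_2$ with $c k_2/\mu > L+2$ and $n_{k_2} > n_{k_1}+1$. A straightforward preimage count in the decomposition $\tilde f^{n_{k_2}}(e_0) = \tilde f^{n_{k_1}+1}(w)$, where $w := \tilde f^{n_{k_2}-n_{k_1}-1}(e_0)$, locates $\alpha_t^{k_2}$ as the $\tilde f^{n_{k_1}+1}$-image of a contiguous subword $w' \Subset w$ of length exceeding $L+2$; applying Lemma \ref{eg-L} to $w'$ minus its two boundary letters places $e_0$ strictly inside $w'$, so $\tilde f^{n_{k_1}+1}(e_0) \Subset \alpha_t^{k_2}$. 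Chaining with $\tilde f(\alpha_t^{k_1}) \Subset \tilde f^{n_{k_1}+1}(e_0)$ gives $\tilde f(\alpha_t)^{k_1} \Subset \alpha_t^{k_2}$, and standard pigeonhole on starting positions modulo $c$ produces integers $d, e \ge 1$ with $\tilde f(\alpha_t)^d = (\sim \alpha_t)^e$ in the free monoid $E(H_t)^*$.

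The main obstacle is lifting this free-monoid relation to a conjugacy relation in $F_N$. By iterated application of Lemma \ref{BFH2.3.1} at the junctions, the subpath of $[f(\alpha^{k_1})]$ spanning the $i$-th through $(j-1)$-th copies of $\tilde f(\alpha_t)$ (with $d = j-i$) is the reduced path $[f(\alpha^d)]$ with at most $\BCC(f)$ edges trimmed from each end; but qua subpath of $\alpha^{k_2}$ it also consists of $e$ consecutive cyclic copies of $\alpha$ beginning at the vertex $o(a_s)$ supplied by pigeonhole, so it is in fact a closed loop at $o(a_s)$. The two trim lengths thus conspire to realize a conjugation in $F_N$, yielding $\varphi(a)^d \sim_{\mathrm{conj}} a^e$; since $a$, and hence $\varphi(a)$, is primitive, this forces $d = e$, whereupon the length identity $d\,|\tilde f(\alpha_t)| = ec$ collapses to $|\tilde f(\alpha_t)| = c$. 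To finish, letting $v$ be the count-vector of $\alpha_t$ over $E(H_t)^+$, the identity $\sum_{e'} v_{e'}(|\tilde f(e')|-1) = 0$ together with $|\tilde f(e')| \ge 1$ (Definition \ref{rtt}(3a)) and the primitivity of $M_t(f)$ (Theorem \ref{BFH5.1.5}(1)) forces $M_t(f)$ to be a permutation matrix with Perron--Frobenius eigenvalue $1$, contradicting $\PF(M_t(f)) > 1$.
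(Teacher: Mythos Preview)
Your overall strategy---argue by contradiction, locate a copy of $e_0$ in $w=[f^{n_{k_2}-n_{k_1}-1}(e_0)]$ whose forward image sits inside $\alpha^{k_2}$, extract a periodicity relation between $[f(\alpha)]$ and $\alpha$, and contradict primitivity plus exponential growth---is exactly the paper's. The difference is that you route everything through the top-stratum projection $\pi$ and then attempt to ``lift back'' in your third paragraph, whereas the paper stays at the level of full paths throughout. That detour leaves a real gap: you establish $\tilde f(\alpha_t)^{k_1}\Subset\alpha_t^{k_2}$ only at the level of top-stratum words, but then treat the corresponding piece of $[f(\alpha^{k_1})]$ ``qua subpath of $\alpha^{k_2}$'' in $G$. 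Your appeal to $\BCC(f)$ does not supply this: bounded cancellation controls what happens when you concatenate and reduce $f$-images, not why two paths with the same top-stratum pattern must have the same $G_{t-1}$ segments. In fact the full-path inclusion you need holds for a reason you never invoke: since $e_0$ is a single legal $H_t$-edge, condition~(3c) for the relative train track $f^{n_{k_1}+1}$ gives that $f^{n_{k_1}+1}(e_0)$ is already reduced, so $\alpha^{k_1}\Subset f^{n_{k_1}}(e_0)$ yields $f(\alpha^{k_1})\Subset f^{n_{k_1}+1}(e_0)\Subset\alpha^{k_2}$ as honest subpaths, with no trimming at all.

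The paper exploits exactly this observation and thereby avoids both the projection and the $\BCC$ appeal. Writing $[f(\alpha)]=uvu^{-1}$ with $v$ cyclically reduced, one has $[f(\alpha^{k_1})]=uv^{k_1}u^{-1}\Subset\alpha^{k_2}$, and pigeonhole on the $k_1>|\alpha|$ starting positions of the copies of $v$ inside $\alpha^{k_2}$ gives $v^{m_1}=(\sim\!\alpha)^{m_2}$ directly as reduced paths in $G$. Primitivity of $a$ (hence of $v$, which represents $\varphi(a)$ up to conjugacy) forces $m_1=m_2$, and comparing $H_t$-edge counts using $\lambda_-=\min_{e\in E(H_t)}|f(e)|_t\ge 2$ (a consequence of Convention~\ref{edge-image}) finishes in one line. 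Your endgame reaches the same contradiction via the longer route of showing the columns of $M_t(f)$ indexed by edges in $\alpha_t$ are standard basis vectors and then invoking aperiodicity; this works, but is more than is needed once the full-path inclusion is in hand.
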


\begin{proof}
As in Lemma \ref{iwip-edge-bound} , we suppose for contradiction that for all $k \geq 1$, there exists $n = n_k \geq 1$ for which $\alpha^k \Subset [f^{n_k}(e_0)]$.  Choose $k_1 \geq |\alpha| + 1$.  Let $L$ be as per Lemma \ref{eg-L}.  Thus if $w \Subset [f^j(e_0)]$ is so that $|w|_t \geq L$, then $w$ contains all edges in $E(H_T)$.  Choose $k_2$ so that $|[f^{-(n_{k_1}+1)}(\alpha^{k_2})]|_t \geq L$.  Then $\alpha^{k_2} \Subset [f^{n_{k_2}}(e_0)]$ is so that
\[
e_0 \Subset [f^{-(n_{k_1}+1)}(\alpha^{k_2})] \Subset [f^{n_{k_2} -(n_{k_1}+1)}(e_0)]
\]
As $\alpha^{k_1} \Subset [f^{n_{k_1}}(e_0)]$, we have
\[
[f(\alpha^{k_1})] \Subset \alpha^{k_2} \Subset [f^{n_{k_2}}(e_0)]
\]
Write $[f(\alpha)] = u v u^{-1}$, so that $v = [f(\alpha)]_{c \#}$ as cyclic paths.  Then
\[
[f(\alpha^{k_1})] = (uvu^{-1})^{k_1} = u v^{k_1} u^{-1}
\]
By our choice of $k_1$, the Pigeonhole Principle gives $m_1,m_2 \geq 1$ for which
\[
(\sim[f(\alpha)])^{m_1} = v^{m_1} = (\sim \alpha)^{m_2}
\]
where $\sim$ denotes a cyclic permutation.  As $\alpha$ represents a primitive $a \in F_N$, we conclude that $m_2 = \pm m_1$.  We now note that $(\sim[f(\alpha)])^{m_1} = (\sim \alpha)^{\pm m_1}$ is impossible since $f$ expands path lengths in $H_t$ by at least $\lambda_- :=  \min_{e \in E(H_t)} \{ |f(e)|_t \} \geq 2$.

\end{proof}

The proof of Theorem \ref{iwip-main} does directly not carry over to non-iwip automorphisms.  A priori there is no guarantee that the realization (under the marking) of a given primitive $a \in F_N$ in good relative train tracks for $\varphi$ and $\varphi^{-1}$ must cross top strata in both.  However, we can arrange this for a particular choice of $a$, as is outlined below.

\begin{prop}\label{elt-prim}
Let $\mathcal{A} = \{a_0,a_1, \dots, a_{n-1}\}$ be a free basis of $F_N$.  Let $w \in F_N$ be so that there is exactly one occurrence of $a_{n-1}$ (or $\overline{a_{n-1}})$ in $[w]_\mathcal{A}$.  Then $w$ is a primitive element. 
\end{prop}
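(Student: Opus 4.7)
The plan is to exhibit an explicit free basis of $F_N$ containing $w$; primitivity then follows by definition. First I would fix $\epsilon \in \{+1,-1\}$ and write the reduced form
\[
[w]_{\mathcal{A}} = u_1 \, a_{N-1}^{\epsilon} \, u_2,
\]
where $u_1, u_2$ are reduced words over the sub-basis $\{a_0, \dots, a_{N-2}\}$. The hypothesis that $a_{N-1}^{\pm 1}$ appears exactly once in $[w]_{\mathcal{A}}$ is exactly what guarantees such a factorization exists (and that $u_1, u_2$ involve none of the letters $a_{N-1}^{\pm 1}$); handling the cases $\epsilon = \pm 1$ uniformly and allowing $u_1$ or $u_2$ to be trivial is a matter of bookkeeping, not of substance.

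Next I would introduce the endomorphism $\psi \colon F_N \to F_N$ fixing $a_0, \dots, a_{N-2}$ and sending $a_{N-1} \mapsto w$, and argue that $\psi \in \Aut(F_N)$ by constructing an explicit two-sided inverse. The candidate is the endomorphism $\phi$ that fixes $a_0, \dots, a_{N-2}$ and sends
\[
a_{N-1} \longmapsto \bigl(u_1^{-1} \, a_{N-1} \, u_2^{-1}\bigr)^{\epsilon},
\]
a formula forced by formally solving $w = u_1 a_{N-1}^\epsilon u_2$ for $a_{N-1}$. A short direct calculation on generators shows $\phi \circ \psi = \psi \circ \phi = \mathrm{id}_{F_N}$: both compositions visibly fix $a_0, \dots, a_{N-2}$, and applying $\phi$ to $\psi(a_{N-1}) = u_1 a_{N-1}^\epsilon u_2$ recovers $a_{N-1}$ after the two $u_i^{\pm 1}$ pairs cancel, regardless of the sign of $\epsilon$.

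Since $\psi$ is an automorphism, the image $\psi(\mathcal{A}) = \{a_0, \dots, a_{N-2}, w\}$ is a free basis of $F_N$, so $w$ is primitive. There is no genuine obstacle in this argument; the single-occurrence hypothesis is tailored precisely so that the Nielsen-type substitution $a_{N-1} \mapsto w$ is invertible, and the only minor care needed is to verify the inverse formula in both sign cases.
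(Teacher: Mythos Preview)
Your proof is correct and essentially the same as the paper's. Both show that $\{a_0,\dots,a_{N-2},w\}$ is a free basis by verifying that the endomorphism fixing $a_0,\dots,a_{N-2}$ and sending $a_{N-1}\mapsto w$ is an automorphism; the paper phrases this as a composition of left and right Nielsen transvections determined by $u_1$ and $u_2$, whereas you write down the inverse map explicitly.
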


\begin{proof}
Note that $\mathcal{A}' = \{a_0, a_1, \dots, a_{n-2}\}$ is a free basis of $F_{N-1}$, and write $[w]_\mathcal{A} = \alpha a_{n-1} \beta$ with $\alpha = [\alpha]_{\mathcal{A}'}$, $\beta = [\beta]_{\mathcal{A}'}$.  Then the free basis $\mathcal{A}' \cup \{w\}$ is obtained from the free basis $\mathcal{A} = \mathcal{A}' \cup \{a_{n-1}\}$ by means of right (respectively left) transvections as determined by $\beta$ (respectively $\alpha$).  As transvections lie in $\Aut(F_N)$, the proposition follows.
\end{proof}

\begin{cor}\label{edge-prim}
Let $G$ be a finite graph.  Let $e_0 \in E(G)$.  If $\alpha$ is a cyclicly reduced path $G$ that crosses $e_0$ (in either direction) exactly once, then $\alpha$ is a primitive element in $\pi_1 (G) \cong F_N$.
\end{cor}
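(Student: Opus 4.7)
The plan is to reduce the statement to Proposition \ref{elt-prim} by exhibiting a free basis of $\pi_1(G) \cong F_N$ one of whose generators corresponds naturally to $e_0$. First I would observe that $e_0$ cannot be a bridge of the component of $G$ containing $\alpha$: since $\alpha$ is closed, cyclically reduced, and traverses $e_0$ exactly once, the two endpoints of $e_0$ must remain in the same component of $G \setminus \{e_0, \overline{e_0}\}$ (otherwise $\alpha$ would have no way to close up without crossing $e_0^{\pm 1}$ a second time, contradicting the hypothesis on crossings). Components of $G$ disjoint from $\alpha$ are irrelevant to the conjugacy class of $\alpha$, so I may assume $G$ is connected. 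I then choose a spanning tree $T$ of $G$ with $e_0 \notin E(T)$, which exists precisely because $e_0$ is not a bridge.

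Next, I would fix a basepoint $v \in V(T)$ and select one edge from each non-tree pair, labelling them $e_0, e_1, \dots, e_{N-1}$. The standard spanning-tree construction then yields a free basis $\mathcal{A} = \{b_0, b_1, \dots, b_{N-1}\}$ of $\pi_1(G, v) \cong F_N$ by setting $b_i := [v, o(e_i)]_T \, e_i \, [t(e_i), v]_T$. Traversing $\alpha$ from one of its vertices and recording each non-tree edge encountered (with its orientation) produces a word $w$ over $\mathcal{A}$ representing a conjugate of $\alpha$ in $\pi_1(G,v)$; by the construction of $\mathcal{A}$, the number of occurrences of $b_0^{\pm 1}$ in $w$ equals the number of crossings of $e_0^{\pm 1}$ by $\alpha$, which is exactly one.

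Reducing $w$ to $[w]_\mathcal{A}$ can only eliminate occurrences of $b_0^{\pm 1}$ by cancelling a $b_0$ against a $\overline{b_0}$, but only one of these two letters appears in $w$ at all. Hence $[w]_\mathcal{A}$ still contains exactly one occurrence of $b_0$ or $\overline{b_0}$, and Proposition \ref{elt-prim} (after reindexing so $b_0$ plays the role of $a_{n-1}$) shows that $w$ represents a primitive element of $F_N$. Since primitivity is invariant under conjugation, $\alpha$ itself represents a primitive element of $\pi_1(G) \cong F_N$. The only delicate step is the bridge observation that guarantees the existence of a spanning tree avoiding $e_0$; everything afterward is a routine application of the spanning-tree description of $\pi_1$ of a graph combined with Proposition \ref{elt-prim}.
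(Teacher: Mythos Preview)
Your proof is correct and follows essentially the same approach as the paper: both observe that $e_0$ cannot be separating, choose a maximal tree $T$ in $G \setminus \{e_0\}$, and then use the resulting free basis (equivalently, the quotient $G/T \cong R_N$) to see that $\alpha$ corresponds to a word with a single occurrence of the generator associated to $e_0$, whence Proposition~\ref{elt-prim} applies. Your write-up is more explicit about the word construction, the non-cancellation of the lone $b_0^{\pm 1}$, and conjugation-invariance of primitivity, but the underlying argument is the same.
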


\begin{proof}
Note first that $e_0$ can not be a separating edge.  Choose a maximal tree $T \subset G \backslash \{e_0\}$.  Then $T$ is also a maximal tree for $G$ with $(G \backslash \{e_0\}) / T \cong R_{N-1}$ and $G/T \cong R_N$ as graphs.  Furthermore $(G \backslash \{e_0\})/ T \subset G/T$, the only difference being the additional loop associated to the edge $e_0$.  The cyclic path $[[\tau(\alpha)]]$ in $G/T$ crosses the loop $e_0$ exactly once (in either direction); the remainder of the edge path lies in $(G \backslash \{e_0\})/ T$.  
\end{proof}

The proof Lemma \ref{red-prim} will require us to choose loops in $G$ with very specific properties.  The following two lemmas ensure this is possible.

\begin{prop}\label{graph}
Let $N \geq 2$.  Let $G$ be a finite connected graph with $\pi_1(G) \cong F_N$ for which $G$ has no valence one vertices.  Let $H \subsetneq G$ be a proper subgraph with $\#E(H) \geq 1$.  Let $M \geq 1$, $e_0 \in E(H)$ be given.  Then there is a cyclically reduced path $\alpha$ such that $\alpha$ crosses $e_0$ at least $M$ times (in either direction), and $\alpha$ represents a primitive element in $\pi_1(G)$.
\end{prop}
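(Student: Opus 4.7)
The plan is to invoke Corollary \ref{edge-prim}: any cyclically reduced closed path that traverses some edge exactly once represents a primitive element. So it suffices to construct a cyclically reduced $\alpha$ crossing $e_0$ at least $M$ times and crossing some non-separating edge $e_1 \neq e_0$ exactly once. Because $\pi_1(G) \cong F_N$ with $N \geq 2$, every spanning tree of $G$ has at least $N \geq 2$ non-tree edges, each of which is non-separating in $G$, so there are plenty of candidates for $e_1$.

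I would split into two cases based on whether $e_0$ is non-separating or a bridge. In the non-separating case, choose a spanning tree $T$ of $G$ with $e_0, e_1 \notin T$ and a basepoint $v$; form the standard based loops
\[
\ell_i := [v, o(e_i)]_T \cdot e_i \cdot [t(e_i), v]_T, \qquad i = 0, 1,
\]
and take $\alpha$ to be the cyclically reduced form of $\ell_0^M \ell_1$. The only reductions happen between adjacent tree-path segments (which collapse to tree geodesics), and since neither $e_0$ nor $e_1$ lies in $T$, no occurrence of either edge is cancelled. Hence $\alpha$ crosses $e_0$ exactly $M$ times and $e_1$ exactly once.

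If $e_0$ is a bridge, it belongs to every spanning tree. The no-valence-one hypothesis forces each component of $G \setminus \{e_0\}$ to contain a cycle: a tree component would have at least two leaves, at most one of which (the endpoint of $e_0$) can avoid being valence-one in $G$. Pick non-separating edges $e_1, e_2$ in opposite components, a spanning tree $T$ with $e_0 \in T$ and $e_1, e_2 \notin T$, and a basepoint $v$ on the $e_1$-side. Then $\ell_1$ is supported on the $e_1$-side and does not cross $e_0$, while $\ell_2$ crosses $e_0$ exactly twice. To prevent the adjacent $\bar{e_0}, e_0$ pairs in $\ell_2^M$ from cancelling, I intersperse an auxiliary based loop $\ell_3$ on the $e_1$-side that avoids $e_1$ (available whenever that side has rank $\geq 2$) and take $\alpha := (\ell_2 \ell_3)^M \ell_1$. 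In the degenerate case where both sides have rank exactly $1$ (forcing $N = 2$), I replace this by $\alpha := (\ell_2 \ell_1)^M \ell_1$ and verify primitivity via Proposition \ref{elt-prim} after the Nielsen change of basis to $\{\ell_2 \ell_1, \ell_1\}$, in which $\alpha$ reads as $c^M d$ with $d$ used exactly once.

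The main obstacle is the bridge case: one must verify that both components of $G \setminus \{e_0\}$ carry nontrivial loops (the only essential use of the no-valence-one hypothesis), and organize the construction so that no $e_0$-crossing is lost to cancellation between consecutive bridge excursions. Once $\alpha$ is in place, primitivity follows from Corollary \ref{edge-prim} in the main cases and from Proposition \ref{elt-prim} in the degenerate one, with the remaining verifications reducing to routine tree-path bookkeeping.
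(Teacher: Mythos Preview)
Your proof is correct and follows essentially the same plan as the paper: the same separating/non-separating case split on $e_0$, the same device of building $\alpha$ so that some non-tree edge is crossed exactly once and then invoking Corollary~\ref{edge-prim}, and the same observation that the no-valence-one hypothesis forces each side of a bridge to carry a cycle. The one substantive difference is in the degenerate bridge subcase where both sides have rank~$1$ (so $N=2$): the paper manufactures its primitive element by iterating a pair of Nielsen transformations $(\eta'\circ\eta_1)^{M'}(l_1)$ and arguing that the number of $e_0$-crossings grows without bound, whereas your explicit word $(\ell_2\ell_1)^M\ell_1$, read in the Nielsen-transformed basis $\{\ell_2\ell_1,\ell_1\}$ as $c^Md$, gives primitivity in one step via Proposition~\ref{elt-prim} and makes the $e_0$-count transparent. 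This is a genuine simplification of that subcase.

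One small point to make explicit: in your bridge case with one side of rank $\geq 2$, you need the auxiliary loop $\ell_3$ to live on the \emph{$e_1$-side}, so you should say up front that you place the basepoint $v$ and the edge $e_1$ on whichever side has rank $\geq 2$; as written, $e_1$ and $e_2$ are chosen before the rank condition is invoked.
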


\begin{proof}
Suppose first that $e_0 \in E(H)$ is a $G$-non-separating edge.  Choose a maximal tree $T \subset G \backslash \{e_0\}$.  Then $T$ is a maximal tree for $G$ with $\#E(G) - \#E(T) = N \geq 2$, and $e_0 \not \in E(T)$.  Choose $e \neq e_0 \in E(G) \backslash E(T)$.  Choose $v \in V(G)$.  By Corollary \ref{edge-prim} the cyclically reduced form of the following path suffices
\[
\{ [v,o(e_0)]_T \ e_0 \ [t(e_0), v]_T \}^M \ [v,o(e)]_T \ e \ [t(e),v]_T
\]
since all $M$ occurrences of $e_0$ remain after cyclic reduction, along with the occurrence of $e$.  Now suppose that $e_0 \in E(H)$ is a $G$-separating edge.  Let $\Gamma, \Gamma'$ be the two components of $G \backslash \{e_0\}$.  There are two cases to consider.  Note first that neither of $\Gamma, \Gamma'$ are trees, else $G$ would have valence one vertices. If either $\pi_1(\Gamma) \cong F_M$ or $\pi_1(\Gamma') \cong F_M$ with $M \geq 2$, then we proceed as follows.  Without loss suppose $\pi_1(\Gamma) \cong F_M$ with $M \geq 2$.  Choose maximal trees $T, T'$ in $\Gamma, \Gamma'$ respectively.  Note that $T^* := T \cup T' \cup \{e_0\}$ is a maximal tree for $G$.  Choose $e_1 \in E(\Gamma) \backslash E(T)$ and $e_2 \neq e_1 \in E(\Gamma) \backslash E(T)$.  Choose $e' \in E(\Gamma') \backslash E(T')$.  Choose $v \in V(\Gamma)$.  Then by Corollary \ref{edge-prim}, the cyclically reduced form of the following path suffices
\[
\{ [v,o(e_1)]_T \ e_1 \ [t(e_1) , o(e_0)]_T \ e_0 \ [t(e_0),o(e')]_{T'} \ e' \  [t(e'),t(e_0)]_{T'}
\]
\[
\overline{e_0} \ [o(e_0),v]_T \}^{ \lceil \frac{M}{2} \rceil } \ [v,o(e_2)]_T \ e_2 \ [t(e_2),v]_T
\]
since the $M$ occurrences of $e_0$, and $\overline{e_0}$, along with the single occurrence of $e_2$ remain after cyclic reduction.  Now suppose that $\pi_1(\Gamma) \cong \pi_1 (\Gamma') \cong \mathbb{Z}$.  Choose maximal trees $T, T'$ in $\Gamma, \Gamma'$ and let $T^* := T \cup T' \cup \{e_0\}$ be a maximal tree for $G$.  Choose $e_1 \in E(\Gamma) \backslash E(T)$ and $e' \in E(\Gamma') \backslash E(T')$.  Choose $v \in V(\Gamma)$.  As the map $G \to G/T^*$ is a homotopy equivalence, write $\pi_1(G) \cong F_2 = F(l_1, l')$, where
\[
l_1 := [v,o(e_1)]_T \ e_1 \ [t(e_1),v]_T
\]
\[
l' := [v,o(e_0)]_T \ e_0 \ [t(e_0),o(e')]_{T'} \ e' \ [t(e'),t(e_0)]_{T'} \ \overline{e_0} \ [o(e_0),v]_T
\]
Let $\eta_1, \eta' \colon F(l_1, l') \to F(l_1, l')$ be the Nielsen Transformations
\[
\eta_1 := \begin{cases}
l_1 \mapsto l_1 l' \\
l' \mapsto l' \end{cases}
\]
\[
\eta' := \begin{cases}
l_1 \mapsto l_1 \\
l' \mapsto l'l_1 \end{cases}
\]
In the automorphic image of $l_1$ under $(\eta' \circ \eta_1)^{M'}$ the number of occurrences of $l_1 l'$ and $l' l_1$ is unbounded in $M'$.  Each such occurrence crosses $e_0$, as the $e_0$ do not cancel during reduction.  Thus choosing $M'$ large enough, the closed path $(\eta' \circ \eta_1)^{M'}(l_1)$ suffices.

\end{proof}

\begin{prop}\label{graph1}
Let $N \geq 2$.  Let $G$ be a finite connected graph with $\pi_1(G) \cong F_N$ for which $G$ has no valence one vertices.  Let $H \subsetneq G$ be a proper subgraph with $\#E(H) \geq 1$.  Let $\alpha$ be a cyclically reduced path in $G \backslash H$.  Then there is a closed path $\alpha ' $ for which the following hold: 

\begin{enumerate}
\item $\alpha' = \eta \ \alpha$ as a cyclic path for some closed path $\eta$.
\item $[[ \alpha']]$ crosses $H$; in particular $[\eta]$ crosses $H$ and all occurrences of H-edges in $[\eta]$ remain after cyclic reduction to $[[\alpha']]$.
\item $[[\alpha']]$ represents a primitive element in $\pi_1(G)$.
\item There is a bound depending only on $G$ for the amount of cyclic reduction in cyclically reducing $[\eta] \alpha$ to $[[\alpha']]$.
\end{enumerate}
\end{prop}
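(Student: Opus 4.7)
The plan is to exploit the observation that $\alpha \subset G \setminus H$ contains no $H$-edges: since cancellation only occurs between an edge and its inverse (which are both of the same type), no $H$-edge of $[\eta]$ can be cancelled by any edge of $\alpha$ during reduction of $[\eta]\alpha$ to $[[\alpha']]$. This makes property 2 automatic once I arrange for $[\eta]$ to cross $H$, and it reduces property 4 to building $\eta$ whose non-$H$ prefix and suffix (after reduction) have length bounded by a constant depending only on $G$---for instance, the diameter of a maximal tree of $G$. For primitivity (property 3), I will arrange that $[[\eta\alpha]]$ crosses some specific edge exactly once and then invoke Corollary \ref{edge-prim}.

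Letting $v$ denote the basepoint of $\alpha$, I would split on whether some $e^* \in E(H)$ is non-separating in $G$. In the non-separating case I would pick such an $e^*$ together with a maximal tree $T \subset G \setminus \{e^*\}$, and set
\[
\eta := [v, o(e^*)]_T \cdot e^* \cdot [t(e^*), v]_T.
\]
Since $e^* \not\in T$ no internal cancellation occurs, so $[\eta] = \eta$ crosses $e^*$ exactly once; as $\alpha$ avoids $e^*$, so does $[[\eta\alpha]]$, which then satisfies the hypothesis of Corollary \ref{edge-prim} and is primitive. The non-$H$ buffer at each seam has length at most $\diam(T)$, bounding the cancellation uniformly in $\alpha$.

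In the remaining case every $H$-edge separates $G$; I would fix any such $e^*$, write $G \setminus \{e^*\} = \Gamma \sqcup \Gamma'$ with $v \in \Gamma$, and observe that $\alpha \subset \Gamma$ (because $\alpha$ avoids $e^*$ yet is connected and based at $v$). I would then choose a non-tree edge $e' \in E(\Gamma')$, together with maximal trees $T_\Gamma$ of $\Gamma$ and $T'$ of $\Gamma' \setminus \{e'\}$, and define
\[
\eta := [v, o(e^*)]_{T_\Gamma} \cdot e^* \cdot [t(e^*), o(e')]_{T'} \cdot e' \cdot [t(e'), t(e^*)]_{T'} \cdot \overline{e^*} \cdot [o(e^*), v]_{T_\Gamma}.
\]
Both occurrences of $e^*$ are separated by a nontrivial $\Gamma'$-subpath and survive reduction with $\alpha$ via the key observation, while $e'$ appears exactly once in $[\eta]$ and is untouched by $\alpha \subset \Gamma$; so $[[\eta\alpha]]$ crosses $e'$ exactly once and Corollary \ref{edge-prim} again delivers primitivity. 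I expect the main obstacle to lie in this case: one must verify that an $e'$ as above exists, i.e.\ that both $\Gamma$ and $\Gamma'$ have positive rank. This is the only step where the ``no valence-one vertex'' hypothesis on $G$ is genuinely used---a hypothetical leaf of $\Gamma$ or $\Gamma'$ other than the relevant endpoint of $e^*$ would be a valence-one vertex of $G$.
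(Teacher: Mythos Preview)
Your plan is correct and cleaner than the paper's argument. The paper decomposes $G\setminus H$ into components and runs a multi-branch case analysis on how $H$-edges connect the component $\Gamma_1\supseteq\alpha$ to its neighbours (an $H$-edge with both endpoints in $\Gamma_1$; an $H$-edge to a non-tree neighbouring component; all neighbouring components trees, with two further sub-cases). By removing only a \emph{single} $H$-edge $e^*$ and asking whether it separates $G$, you collapse this to two cases; the witness for Corollary~\ref{edge-prim} is then $e^*$ itself or a non-tree edge of the far side $\Gamma'$, and your check that $\Gamma'$ has positive rank is exactly the right use of the valence hypothesis. The cost is that your maximal tree lives in $G\setminus\{e^*\}$ rather than in a component of $G\setminus H$, so the tree-path portions of $[\eta]$ may themselves carry $H$-edges; the paper's tree paths are $H$-free by construction, which makes property~2 immediate there. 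Your key observation still suffices---an $H$-edge of $[\eta]$ cannot cancel against any $\alpha$-edge, and $\alpha$ is never wholly consumed at the two seams since that would force the closed, cyclically reduced path $\alpha$ to coincide with a simple tree path based at $v$---but that second point is worth making explicit, since without it one has not ruled out two $H$-edges of $[\eta]$ cancelling with each other after $\alpha$ disappears.
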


\begin{proof}
Let $\Gamma_1$ be the component of $G \backslash H$ for which $\alpha \subset \Gamma_1$.  Let $\Delta = G \backslash \{ \Gamma_1 \cup H \}$, and write $\Delta = \cup_{i=2}^k \Gamma_i$, where the $\Gamma_i$ are the components of $\Delta$.  Let $\Delta^* \subset \Delta$ consist of those components which satisfy the following: there is an edge $e \in E(H)$ for which $\Gamma_i \cup \{e\}$ is connected and meets $\Gamma_1$.   If there is $e_0 \in E(H)$ for which $o(e_0) \in E(\Gamma_1)$ and $t(e_0) \in E(\Gamma_1)$, then we proceed as follows.  Let $T_1$ be a maximal tree in $\Gamma_1$.  Choose a vertex $v \Subset \alpha$ and write $o(\alpha) = t(\alpha) = v$.  Then the cyclically reduced form of the following path suffices
\[
\alpha' = \underbrace{[v,o(e_0)]_{T_1} \ e_0 \ [t(e_0),v]}_{\eta} \ \alpha
\]
Note the amount of cyclic reduction between $[\eta]$ and $\alpha$ is at most $2 \diam \Gamma_1$, and $e_0 \Subset [[\alpha']]$ occurs exactly once (see Corollary \ref{edge-prim}).  Suppose now that no such $e_0 \in E(H)$ exists; that is if $e \in E(H)$ is such that $o(e) \in V(\Gamma_1)$, then $t(e) \in V(\Delta^*)$.  Let $e_0 \in E(H)$ be such that $o(e_0) \in V(\Gamma_1)$ and write $t(e_0) \in V(\Gamma_j)$ for $j \geq 2$.  Suppose further that $\Gamma_j$ is not a tree.  Let $T_j$ be a maximal tree in $\Gamma_j$, and let $e_j \in E(\Gamma_j) \backslash E(T_j)$.  Then the cyclically reduced form of the following path suffices
\[
\alpha' = \underbrace{[v,o(e_0)]_{T_1} \ e_0 \ [t(e_0),o(e_j)]_{T_j} \ e_j \ [t(e_j),t(e_0)]_{T_j} \ \overline{e_0} \ [o(e_0),v]_{T_1}}_{\eta} \ \alpha
\]
Note the amount of cyclic reduction between $[\eta]$ and $\alpha$ is at most $2 \diam \Gamma_1$.  Also, $[[\alpha']]$ crosses $e_j$ exactly once (see Corollary \ref{edge-prim}).  Finally suppose that for every $e \in E(H)$ for which $o(e) \in V(\Gamma_1)$, we have that $t(e)$ lies in $V(\Delta^*)$, and $\Delta^*$ is a forest.  Since $G$ is connected, either there is an edge $e^* \in E(H)$ for which $o(e^*) \in V(\Gamma_i)$, $t(e^*) \in V(\Gamma_j)$ with $i \neq j$ and both $\Gamma_i, \Gamma_j$ are trees in $\Delta^*$ or there is a pair of edges $e_0, e_1$ with $o(e_0), o(e_1) \in V(\Gamma_1)$ and $t(e_0), t(e_1) \in V(\Gamma_i)$ with $\Gamma_i$ a tree in $\Delta^*$.  In the former case, the cyclically reduced form of the following path suffices
\[
\alpha' = \underbrace{[v,o(e_i)]_{T_1} \ e_i \ [t(e_i),o(e^*)]_{\Gamma_i} \ e^* \ [t(e^*),t(e_j)]_{\Gamma_j} \ \overline{e_j} \ [o(e_j),v]_{T_1}}_{\eta} \ \beta
\]
In the latter case, the cyclically reduced form of the following path suffices
\[
\alpha' = \underbrace{[v,o(e_0)]_{T_1} \ e_0 \ [t(e_0),t(e_1)]_{T_i} \ \overline{e_2} \ [o(e_2),v]_{T_1}}_{\eta} \ \beta
\]
\end{proof}

\begin{lem}\label{red-prim}
Let $N \geq 2$.  Let $\varphi \in \Out(F_N)$ be such that neither $\varphi$ nor $\varphi^{-1}$ is an NEG-automorphism.  Then there is a free basis $\mathcal{A}$ and an $a \in \mathcal{A}$ for which $\alpha := [[\tau(a)]]$ crosses $H_t$ and $\alpha' := [[\tau'(a)]]$ crosses $H'_{t'}$.
\end{lem}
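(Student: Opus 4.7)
The plan is to exhibit a single primitive element $a \in F_N$ whose cyclic realizations $[[\tau(a)]]$ and $[[\tau'(a)]]$ in $G$ and $G'$ respectively cross the EG top strata $H_t$ and $H'_{t'}$; primitivity then lets us extend $\{a\}$ to a free basis $\mathcal{A}$. First, using Theorem \ref{BFH5.1.5} together with Lemma \ref{ptp}, we would fix good relative train tracks $f \colon G \to G$ and $f' \colon G' \to G'$ for positive iterates of $\varphi$ and $\varphi^{-1}$, both having EG top strata by hypothesis. If $G_{t-1} = \varnothing$ then $H_t = G$ and every nontrivial cyclically reduced loop crosses $H_t$ automatically, and analogously for $G'$; the substantive case, which we now handle, is when $G_{t-1}$ and $G'_{t'-1}$ are both proper non-empty subgraphs.

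The construction would proceed in two steps. Step one: apply Proposition \ref{graph} to $(G, H_t, e_0, M)$ for some $e_0 \in E(H_t)$ and a large parameter $M$ to be chosen at the end, producing a primitive $a_1 \in F_N$ whose realization $\alpha_1 := [[\tau(a_1)]]$ crosses $e_0$ — and hence $H_t$ — at least $M$ times. If $[[\tau'(a_1)]]$ already crosses $H'_{t'}$, take $a := a_1$ and we are done. Otherwise $\beta_1 := [[\tau'(a_1)]]$ is a cyclically reduced path in $G' \setminus H'_{t'}$, and step two applies Proposition \ref{graph1} to $(G', H'_{t'}, \beta_1)$, yielding a primitive cyclic path $\beta_2 = \eta' \beta_1$ crossing $H'_{t'}$, with $|\eta'|_{G'}$ bounded by a constant $C'$ depending only on $G'$ (as is visible from the explicit construction of $\eta'$ in the proof of Proposition \ref{graph1}). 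Let $a_2 \in F_N$ be the primitive element represented by $\beta_2$ (primitivity by Property 3 of Proposition \ref{graph1}); by construction $[[\tau'(a_2)]]$ crosses $H'_{t'}$.

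The essential verification, and the main obstacle, will be to show that $[[\tau(a_2)]]$ still crosses $H_t$ in $G$. Writing $a_2 = h' \cdot a_1$ up to conjugacy in $F_N$, where $h'$ is the class represented by $\eta'$ under the marking $\tau'$, the word length of $h'$ in $F_N$ — and therefore $|\tau(h')|_G$ — is bounded by a constant $C_1$ depending only on $G, G', \tau, \tau'$, via Lipschitz constants of fixed homotopy equivalences among $R_N$, $G$, and $G'$. Since $[[\tau(a_2)]]$ equals the cyclic reduction of $\tau(h') \cdot \alpha_1$ in $G$, a standard count shows that at most $|\tau(h')|_G \leq C_1$ edges of $\alpha_1$ can be cancelled at the linear join with $\tau(h')$, and at most $|\tau(h')|_G \leq C_1$ further edges at the cyclic wrap-around, so at most $2C_1$ edges of $\alpha_1$ are lost. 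Choosing $M > 2C_1$ at the outset — which is feasible since $C_1$ depends only on fixed data — ensures that at least $M - 2C_1 > 0$ of $\alpha_1$'s $H_t$-crossings survive in $[[\tau(a_2)]]$, so $[[\tau(a_2)]]$ crosses $H_t$, completing the argument. The delicate point is precisely this cancellation bound: the adjustment $\eta'$ introduced in step two to repair the $G'$-side must not destroy the $H_t$-crossings planted on the $G$-side in step one, which is guaranteed by front-loading $M$ sufficiently large relative to the fixed cancellation constant $C_1$.
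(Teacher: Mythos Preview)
Your approach is essentially the same as the paper's: both start from Proposition~\ref{graph} to plant many $H_t$-crossings in $G$, then invoke Proposition~\ref{graph1} on the $G'$ side if needed, and finally argue that the $G'$-side modification cannot destroy all the $H_t$-crossings because its size is uniformly bounded. The one substantive difference is in how the final cancellation bound is justified. The paper works directly with the difference-of-markings map $\upsilon\colon G' \to G$ and invokes the Bounded Cancellation Lemma (Lemma~\ref{BFH2.3.1}) to control how much of $[\upsilon(\alpha')]$ is lost when one concatenates with $[\upsilon(\eta)]$ and cyclically reduces; the resulting threshold is $M > 2(\diam G' + \BCC(\upsilon))$. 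Your route through $F_N$ via Lipschitz constants is morally equivalent, but the assertion that ``$[[\tau(a_2)]]$ equals the cyclic reduction of $\tau(h')\cdot\alpha_1$'' hides a basepoint issue: $\tau(h')$ is based at $\tau(\ast)$ while $\alpha_1$ need not pass through $\tau(\ast)$, so one actually has $[\tau(a_1)] = p\,\alpha_1'\,\bar p$ for some tail $p$, and your ``standard count'' must absorb $|p|$ as well as $|\tau(h')|$. That $|p|$ is bounded independently of $M$ is exactly what bounded cancellation gives (applied to $\upsilon$ and the cyclically reduced loop $\beta_1$). Once this is made explicit your argument goes through; the paper's formulation via $\upsilon$ and $\BCC(\upsilon)$ simply packages the same estimate more directly.
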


\begin{proof}
Let $f \colon G \to G$, $f' \colon G' \to G'$ be as per hypothesis.  By Proposition \ref{graph}, there exists a cyclically reduced path $\alpha$ in $G$ such that $\alpha$ crosses $e_0 \in E(H_t)$ at least $M$ times ($M$ to be chosen later), and represents a primitive element in $F_N$.  Write $\alpha = [[\tau(a)]]$ for $a \in F_N$.  If $\alpha' := [[\tau'(a)]]$ crosses $H'_{t'}$, then we are done.  Else assume that $\alpha' \subseteq G'_{t'-1}$.  Let $v \Subset \alpha'$ be a vertex, and write $o(\alpha') = t(\alpha') = v$.  Define $\alpha'' := \eta \ \alpha'$ where $\eta$ is as per Proposition \ref{graph1}.  Let $\upsilon \colon G' \to G$ be the difference of markings.  Note $[[\upsilon(\alpha')]] = \alpha$.  Let $\BCC(\upsilon)$ be the bounded cancellation constant for $\upsilon$ (see Lemma \ref{BFH2.3.1}).  Choose $M > 2(\diam G' + \BCC(\upsilon))$ so as to ensure enough of $[\upsilon(\alpha')]$ remains when we cyclically reduce to $[[\upsilon(\alpha'')]]$.  More precisely, write
\[
[[\upsilon(\alpha'')]] = [[\upsilon(\eta \ \alpha')]] = [[ \ [\upsilon(\eta)] \ [\upsilon(\alpha')] \ ]]
\]
By Lemma \ref{graph1}, cyclic reduction between $[\eta]$ and $\alpha$ is at most $2 \diam G'$.  By Lemma \ref{BFH2.3.1}, cyclic reduction between $[\upsilon(\eta)]$ and $[\upsilon(\alpha')]$ is at most $2 \BCC(\upsilon)$.  Since the number of occurrences of $e_0$ in $\alpha = [[\upsilon(\alpha')]]$ is greater than $M$, at least one occurrence of $e_0$ must remain in $[[\upsilon(\alpha'')]]$.

\end{proof}

\begin{rem}\label{rem-red-prim}
The conclusion of Lemma \ref{red-prim} holds for weaker hypotheses, however we only make use of it in its current form.
\end{rem}

\begin{thm}\label{red-eg-main}
Let $N \geq 2$.  Let $\varphi \in \Out(F_N)$ be such that neither $\varphi$ nor $\varphi^{-1}$ is an NEG-automorphism.  Then $\varphi$ has $\mathcal{P}$.
\end{thm}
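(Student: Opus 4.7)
The plan is to adapt the proof of Theorem \ref{iwip-main} to the EG setting, handling $\varphi$ and $\varphi^{-1}$ symmetrically with a single pair $(\mathcal{A}, a)$ and combining via Remark \ref{pos-neg}. The hypothesis that neither $\varphi$ nor $\varphi^{-1}$ is NEG guarantees (by Theorem \ref{BFH5.1.5}, Remark \ref{iterate}, and Convention \ref{edge-image}) that iterates of both can be represented by good relative train tracks $f \colon G \to G$ and $f' \colon G' \to G'$ with exponentially growing top strata $H_t$ and $H'_{t'}$, respectively. Denote the markings by $\tau \colon R_N \to G$ and $\tau' \colon R_N \to G'$.

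First I would invoke Lemma \ref{red-prim} to produce a single free basis $\mathcal{A}$ of $F_N$ and an element $a \in \mathcal{A}$ such that $\alpha := [[\tau(a)]]$ crosses $H_t$ in $G$ and, simultaneously, $\alpha' := [[\tau'(a)]]$ crosses $H'_{t'}$ in $G'$. Fix any $e_0 \in E(H_t)$; Lemma \ref{red-edge-bound} then yields $k^* \geq 1$ with $\alpha^{k^*} \not\Subset [f^n(e_0)]$ for every $n \geq 1$. Given an arbitrary $g \in F_N$, set $\gamma := [[\tau(g)]]$, and suppose toward a contradiction that no uniform bound exists on powers $|k|$ with $\alpha^k \Subset [f^n(\gamma)]$; equivalently, for every $l \geq 1$ there is $m_l \geq 1$ with $\alpha^l \Subset [f^{m_l}(\gamma)]$. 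Then Lemma \ref{red-edge} produces, for every $k$ (in particular $k = k^*$), some $n_k$ with $\alpha^k \Subset [f^{n_k}(e_0)]$, contradicting the choice of $k^*$. Hence a uniform bound $M = M_g$ exists, and by Corollary \ref{marked-P} followed by Corollary \ref{ptp-one-sided-out} the automorphism $\varphi$ has $\mathcal{P}^+(\mathcal{A}, a)$.

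Running the identical argument with $(f', \alpha', H'_{t'})$ in place of $(f, \alpha, H_t)$ yields that $\varphi^{-1}$ has $\mathcal{P}^+(\mathcal{A}, a)$, which is to say $\varphi$ has $\mathcal{P}^-(\mathcal{A}, a)$. Because both halves use the same pair $(\mathcal{A}, a)$, Remark \ref{pos-neg} combines them into $\mathcal{P}(\mathcal{A}, a)$, and so $\varphi$ has $\mathcal{P}$. The main obstacle in this plan has already been absorbed upstream into Lemma \ref{red-prim}: unlike the iwip case, Lemma \ref{red-edge-bound} only constrains subpath powers of $\alpha$ inside forward images of top-stratum edges, so for the $\mathcal{P}^+$ and $\mathcal{P}^-$ halves to be run symmetrically with a common $(\mathcal{A}, a)$, we need a \emph{single} primitive element whose cyclic realization crosses the EG top stratum in $G$ and in $G'$ simultaneously. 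Producing this element is exactly where Propositions \ref{graph} and \ref{graph1}, together with the bounded cancellation bound of Lemma \ref{BFH2.3.1}, do the real work; once $(\mathcal{A}, a)$ is in hand, the remainder is a routine EG-level transcription of the iwip proof.
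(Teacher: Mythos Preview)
Your proposal is correct and follows essentially the same approach as the paper's proof: invoke Lemma \ref{red-prim} to obtain a single pair $(\mathcal{A},a)$ whose realization crosses the EG top stratum in both $G$ and $G'$, combine Lemmas \ref{red-edge-bound} and \ref{red-edge} to bound powers of $\alpha$ in forward iterates, apply Corollaries \ref{marked-P} and \ref{ptp-one-sided-out} to get $\mathcal{P}^+(\mathcal{A},a)$ for each of $\varphi$ and $\varphi^{-1}$, and conclude $\mathcal{P}(\mathcal{A},a)$. Your added commentary on why Lemma \ref{red-prim} is the crux (and where Propositions \ref{graph}, \ref{graph1} and Lemma \ref{BFH2.3.1} enter) is accurate.
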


\begin{proof}
Let $\mathcal{A}$, and $a \in \mathcal{A}$ be as in the conclusion of Lemma \ref{red-prim}.  Let $f \colon G \to G, f' \colon G' \to G'$ be good relative train tracks for $\varphi, \varphi^{-1}$, respectively.  Let $g \in F_N$ be arbitrary.  By Lemma \ref{red-edge-bound} and Lemma \ref{red-edge}, there is $M \geq 1$ so that if $[[\tau(a)]]^k \Subset [f^n([[\tau(g)]])]$ for any $n \geq 1$, then $|k| \leq M$.  Thus by Corollary \ref{marked-P} and Corollary \ref{ptp-one-sided-out}, $\varphi$ has $\mathcal{P}^{+}(\mathcal{A},a)$.  Similarly, Lemma \ref{red-edge-bound}, Lemma \ref{red-edge}, Corollary \ref{marked-P}, and Corollary \ref{ptp-one-sided-out} imply that $\varphi^{-1}$ has $\mathcal{P}^{+}(\mathcal{A},a)$, that is, $\varphi$ has $\mathcal{P}^{-}(\mathcal{A},a)$.  Thus $\varphi$ has $\mathcal{P}(\mathcal{A},a)$.  
\end{proof}

\section{NEG-Automorphisms}\label{NEG-Automorphisms}

\begin{lem}\label{red-edge-neg}
Let $N \geq 2$.  Let $\varphi \in \Out(F_N)$ be an NEG-automorphism and $f \colon G \to G$ a good relative train track for $\varphi$.  Let $g \in F_N$ be arbitrary.  Let $\gamma := [[\tau(g)]]$.  Let $a \in F_N$ be any primitive element for which $\alpha := [[\tau(a)]]$ crosses the top strata, $H_t$.  Then $\exists k \geq 1$, such that $\forall n \geq 1$, we have $\alpha^k \not \Subset [f^n(\gamma)]$.
\end{lem}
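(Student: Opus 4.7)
The plan is to exploit the very restrictive structure of the NEG top stratum supplied by Theorem \ref{BFH5.1.5}(3): $H_t$ consists of a single edge $e_0$, and $f(e_0) = e_0 u$ where $u$ is a closed path in $G_{t-1}$. Since the filtration is $f$-invariant, every edge of $G_{t-1}$ maps under $f$ (and hence under any iterate $f^n$) to a path contained in $G_{t-1}$, which contains no occurrence of $e_0$ or $\overline{e_0}$.

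From here the proof is a bookkeeping count of top-stratum crossings. An easy induction using $f(e_0) = e_0 u$ yields $f^n(e_0) = e_0 \cdot u \cdot f(u) \cdots f^{n-1}(u)$, with each $f^i(u) \subseteq G_{t-1}$; hence the unreduced word $f^n(e_0)$ contains exactly one occurrence of $e_0^{\pm 1}$, namely the leading $e_0$, and symmetrically $f^n(\overline{e_0})$ contains exactly one. Writing the cyclic path $\gamma = e_{i_1} e_{i_2} \cdots e_{i_h}$ and letting $C = C(\gamma)$ denote the number of indices $j$ with $e_{i_j} \in E(H_t)$, the unreduced concatenation $f^n(\gamma) = f^n(e_{i_1}) f^n(e_{i_2}) \cdots f^n(e_{i_h})$ contains exactly $C$ occurrences of $e_0^{\pm 1}$. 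Crucially $C$ depends only on $g$, not on $n$. Cyclic reduction to $[f^n(\gamma)]$ can only cancel pairs of these $e_0^{\pm 1}$ letters against each other, so the number of top-stratum crossings in $[f^n(\gamma)]$ is at most $C$.

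To finish, let $r$ be the number of occurrences of $e_0^{\pm 1}$ in the cyclic path $\alpha$; since $\alpha$ crosses $H_t$ we have $r \geq 1$. Any occurrence of $\alpha^k$ as a subpath contributes $kr$ occurrences of $e_0^{\pm 1}$, so $\alpha^k \Subset [f^n(\gamma)]$ forces $kr \leq C$ and therefore $k \leq C$. Choosing any integer $k > C$ (for instance $k := C+1$) yields the required bound uniformly in $n$. There is no substantive obstacle: the entire argument reduces to the explicit form of $f(e_0)$ supplied by Theorem \ref{BFH5.1.5} together with the $f$-invariance of the filtration, which prevent new top-stratum edges from ever being created under iteration, so the count $C$ is preserved by $f^n$ and can only decrease under reduction.
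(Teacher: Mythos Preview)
Your argument is correct and is essentially the same as the paper's own proof: both exploit the form $f(e_0)=e_0u$ with $u\subseteq G_{t-1}$ from Theorem~\ref{BFH5.1.5}, compute that $f^n(e_0)$ contributes exactly one top-stratum edge, and conclude that the number of $e_0^{\pm1}$ occurrences in $[f^n(\gamma)]$ is bounded by the number in $\gamma$, uniformly in $n$. Your write-up is in fact slightly more explicit than the paper's (you name the constants $C$ and $r$ and spell out the inequality $kr\le C$), but the idea is identical.
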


\begin{proof}
By Theorem \ref{BFH5.1.5}, we may assume that $H_t$ consists of a single edge, $e_0$, and that $f(e_0) = e_0 u$ where $u \subseteq G_{t-1}$ is a closed path whose basepoint is fixed by $f$.  Thus 
\begin{align*}
[f^j(e_0)] = [f^{j-1}(e_0 u)] &= [f^{j-1}(e_0)] [f^{j-1}(u)] \\
&= [f^{j-2}(e_0 u)] [f^{j-1}(u)]  \\
&= [f^{j-2}(e_0)] [f^{j-2}(u)] [f^{j-1}(u)] \\
&\vdots \\
&= e_0 u [f(u)] [f^2(u)] \cdots [f^{j-1}(u)]
\end{align*}
Therefore, the number of occurrences of $e_0$ in $[f^j(\gamma)]$ is bounded by the number of occurrences of $e_0$ in $\gamma$.  The Lemma follows.
\end{proof}

\begin{lem}\label{red-neg-inverse}
Let $N \geq 2$.  Let $\varphi \in \Out(F_N)$ be so that at least one of $\varphi, \varphi^{-1}$ is an NEG-automorphism.  Then there is a good relative train track $f \colon G \to G$ and a topological representative $f' \colon G \to G$ representing $\varphi$ and $\varphi^{-1}$, not necessarily respectively, so that

\begin{enumerate}
\item $f'|_{G \backslash \{e_0\}} \colon G \backslash \{e_0\} \to G \backslash \{e_0\}$ is a homotopy inverse for $f|_{G_{t-1}} \colon G_{t-1} \to G_{t-1}$.
\item $f'(e_0) = e_0 u'$ for some $u' \Subset G \backslash \{e_0\}$.
\end{enumerate}

\end{lem}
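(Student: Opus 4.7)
My plan is to build $f'$ by hand as follows: on $G_{t-1}$, let $f'$ agree with a chosen tight homotopy inverse $h$ of $f|_{G_{t-1}}$, and on $e_0$ set $f'(e_0) := e_0 \cdot [h(\overline{u})]$, where $u$ is given by the NEG structure of $f$. Without loss of generality I assume that $\varphi$ itself is the NEG-automorphism and take $f \colon G \to G$ to be a good relative train track for it (passing to a positive iterate if needed, per Convention \ref{edge-image}); the case where only $\varphi^{-1}$ is NEG is absorbed by the phrase ``not necessarily respectively'', since I simply swap which map is called $f$ and which is called $f'$. By Theorem \ref{BFH5.1.5}, the top stratum $H_t$ is a single edge pair $\{e_0, \overline{e_0}\}$, and $f(e_0) = e_0 u$, where $u \subseteq G_{t-1}$ is a closed path based at $v := t(e_0)$ with $f(v) = v$. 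Comparing $o(f(e_0)) = o(e_0)$ with $f(o(e_0))$ shows $o(e_0)$ is fixed by $f$ too.

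First I would verify that $f|_{G_{t-1}} \colon G_{t-1} \to G_{t-1}$ is a homotopy equivalence: the fact that $f(e_0)$ begins with $e_0$ itself means $\varphi$ preserves the natural splitting in which $\pi_1(G_{t-1})$ appears as a free factor of $\pi_1(G)$ (or the multi-component analogue thereof), and hence its restriction is an automorphism of $\pi_1(G_{t-1})$. Second, I would choose a tight graph map $h \colon G_{t-1} \to G_{t-1}$ which is a homotopy inverse for $f|_{G_{t-1}}$ and satisfies $h(v) = v$ (and fixes $o(e_0)$ in the non-loop case); this is standard. Third, I define $f'$ by $f'|_{G_{t-1}} := h$, prescribe that $f'$ fix $o(e_0)$ and $v$, and set
\[
f'(e_0) \, := \, e_0 \cdot [h(\overline{u})], \qquad f'(\overline{e_0}) := \overline{f'(e_0)}.
\]
The choice $u' := [h(\overline{u})]$ is forced by the requirement $f\circ f'(e_0) \simeq e_0$: computing gives $[f(e_0 \cdot u')] = e_0 \cdot u \cdot [f \circ h(\overline{u})] = e_0 \cdot u \cdot \overline{u} = e_0$. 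Conditions (1) and (2) of the lemma then hold by construction, since $u' = [h(\overline{u})] \subseteq G_{t-1} = G \setminus \{e_0\}$. Finally, to confirm that $f'$ represents $\varphi^{-1}$, I would check that $f' \circ f$ and $f \circ f'$ each induce an inner automorphism on $\pi_1(G)$; on $\pi_1(G_{t-1})$ this is the defining property of $h$, and on the generator arising from $e_0$ it follows from the displayed computation.

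The step I expect to be the main obstacle is the $\pi_1$-level verification in the case where $e_0$ is not a loop, so that $o(e_0) \neq t(e_0)$. There, writing $\pi_1(G)$ in terms of $\pi_1(G_{t-1})$ requires an auxiliary path $p \subseteq G_{t-1}$ from $v$ to $o(e_0)$ (when both endpoints lie in the same component of $G_{t-1}$), or a free-product decomposition if $e_0$ is a separating bridge. The formula $u' = [h(\overline{u})]$ should remain correct, but confirming that the resulting outer automorphism is precisely $\varphi^{-1}$, and not merely a conjugate or twist of it, will require careful bookkeeping with these auxiliary paths and the basepoints that implicitly realize $\varphi^{-1}$ from a graph map. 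A secondary technical point is ensuring that the homotopy inverse $h$ can actually be taken as a tight graph map fixing $v$ (and possibly $o(e_0)$), which follows from standard homotopy-equivalence arguments but must be invoked explicitly.
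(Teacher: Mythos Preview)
Your proposal is correct and follows essentially the same approach as the paper: define $f'$ on $G_{t-1}$ as a homotopy inverse of $f|_{G_{t-1}}$ fixing the relevant basepoints, and set $f'(e_0) = e_0 u'$ with $u'$ the preimage of $\overline{u}$ under that inverse, then verify $(f\circ f')(e_0)$ reduces to $e_0$. The paper differs only cosmetically, explicitly splitting into the cases where $e_0$ is separating or not (and in the former passing to $f^2$ so each component of $G_{t-1}$ is preserved), whereas you handle both at once by choosing a global homotopy inverse $h$.
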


\begin{proof}
Without loss of generality, assume that $\varphi$ is an NEG-automorphism equipped with $f \colon G \to G$.  By Theorem \ref{BFH5.1.5}, we may assume $E(H_t) = e_0$, a single edge and that $f(e_0) = e_0 u$ where $u \subseteq G_{t-1}$ is a closed path whose basepoint is fixed by $f$.  Note also that $o(e_0)$ is also fixed by $f$.  Let us first consider the case where $e_0$ is a $G$-separating edge.  Pass to the square of $f$ so that each component of $G_{t-1} = G_1 \sqcup G_2$ is fixed.  Without loss, say $o(e_0) \in V(G_1)$ and $t(e_0) \in V(G_2)$.  Let $\eta \colon \pi_1(G_1,o(e_0)) \to \pi_1(G_1,o(e_0))$ and $\zeta \colon \pi_1(G_2,t(e_0)) \to \pi_1(G_2,t(e_0))$ be the isomorphisms induced by $f$.  Let $u' \in \pi_1(G_2,t(e_0))$ be so that $\zeta(u') = \overline{u}$.  Let $f'_{\eta^{-1}}, f'_{\zeta^{-1}}$ be topological representatives for $\eta^{-1}$ and $\zeta^{-1}$ which fix the basepoints $o(e_0),t(e_0)$, respectively.  Put
\[
f' \colon e \mapsto \begin{cases}
f'_{\eta^{-1}}(e) &\text{if} \quad e \in \Gamma_{1} \\
f'_{\zeta^{-1}}(e) &\text{if} \quad e \in \Gamma_{2} \\
e u' &\text{if} \quad e = e_0 \end{cases}
\]
Then
\[
(f' \circ f) : e_0 \mapsto e_0 u' \mapsto e_0 u \overline{u}
\]
which reduces to $e_0$.  Evidently $f, f'$ are the desired maps.  The other case follows similarly.

\end{proof}

\begin{rem}\label{neg}
Using the topological representative from Lemma \ref{red-neg-inverse}, one can now obtain the conclusion of Lemma \ref{red-edge-neg} for $\varphi^{-1}$.
\end{rem}

\begin{thm}\label{red-neg-main}
Let $N \geq 2$.  Let $\varphi \in \Out(F_N)$ be so that at least one of $\varphi, \varphi^{-1}$ is an NEG-automorphism.  Then $\varphi$ has $\mathcal{P}$.  
\end{thm}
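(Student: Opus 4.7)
The plan is to combine Lemma \ref{red-edge-neg} for the forward iterates with its counterpart for the inverse, packaged by Remark \ref{neg} and Lemma \ref{red-neg-inverse}, so that a single primitive element produces two-sided bounds. Property $\mathcal{P}$ is symmetric in $\varphi$ and $\varphi^{-1}$ (by Remark \ref{pos-neg}, once both $\mathcal{P}^+$ and $\mathcal{P}^-$ are established for a common pair $(\mathcal{A},a)$), so the first move is to reduce, without loss of generality, to the case where $\varphi$ itself is an NEG-automorphism. I then invoke Lemma \ref{red-neg-inverse} to produce, on a common underlying graph $G$, a good relative train track $f \colon G \to G$ for an iterate of $\varphi$ with top stratum $H_t = \{e_0\}$ and $f(e_0) = e_0 u$, $u \subseteq G_{t-1}$, together with a topological representative $f' \colon G \to G$ for an iterate of $\varphi^{-1}$ of the form $f'(e_0) = e_0 u'$ with $u' \subseteq G_{t-1}$ and $f'|_{G_{t-1}}$ mapping $G_{t-1}$ into itself. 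Using the \emph{same} graph in both directions is what allows a single basis element to serve both $\mathcal{P}^+$ and $\mathcal{P}^-$.

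Next I would select the basis. Applying Proposition \ref{graph} with $M=1$, $H = H_t$, and edge $e_0$ yields a cyclically reduced path $\alpha$ in $G$ that crosses $e_0$ at least once and represents a primitive element $a \in F_N$. Since $a$ is primitive, I can extend $\{a\}$ to a free basis $\mathcal{A}$ of $F_N$; by construction $\alpha = [[\tau(a)]]$ crosses $H_t$, which is precisely the hypothesis required by Lemma \ref{red-edge-neg}. Now for an arbitrary $g \in F_N$ with $\gamma = [[\tau(g)]]$, Lemma \ref{red-edge-neg} applied to $f$ yields a bound $M^+_g$ such that $\alpha^k \not\Subset [f^n(\gamma)]$ whenever $n \geq 1$ and $k > M^+_g$. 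The same lemma applied to $f'$ — justified by Remark \ref{neg}, whose telescoping identity $[f'^j(e_0)] = e_0 u' [f'(u')] \cdots [f'^{j-1}(u')]$ goes through precisely because $u' \subseteq G_{t-1}$ and $f'$ preserves $G_{t-1}$, so no iterate of $u'$ can ever cancel the leading $e_0$ — yields a companion bound $M^-_g$.

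With both bounds in hand, Corollary \ref{marked-P} translates the forward bound into $\mathcal{P}^+(\mathcal{A},a)$ for the relevant iterate and the backward bound into $\mathcal{P}^-(\mathcal{A},a)$; Remark \ref{pos-neg} combines these into $\mathcal{P}(\mathcal{A},a)$ for an iterate of $\varphi$, and Lemma \ref{ptp} (or the out-automorphism version, Corollary \ref{ptp-out}) promotes this to $\varphi$ itself. The main conceptual point is already contained in the preceding lemmas: for an NEG top edge $e_0$ the $e_0$-count of a path is non-increasing under iteration of $f$ (it stays at most the number of $e_0$'s in the original path), so no arbitrarily large power of a loop that crosses $e_0$ can appear as a subpath of any iterate of $\gamma$. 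The only obstacle I anticipate is bookkeeping between $\varphi$ and its iterates, since Lemma \ref{red-neg-inverse} may pass to a square and the good relative train track in Theorem \ref{BFH5.1.5} generally represents a power; but this is exactly what Lemma \ref{ptp} and Corollary \ref{ptp-one-sided-out} absorb, so the argument closes.
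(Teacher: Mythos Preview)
Your proposal is correct and follows essentially the same approach as the paper: reduce without loss of generality to $\varphi$ being NEG, use Lemma \ref{red-edge-neg} for the forward direction, use Lemma \ref{red-neg-inverse} together with Remark \ref{neg} for the backward direction on the same graph $G$ (hence the same $(\mathcal{A},a)$), then apply Corollary \ref{marked-P}, Corollary \ref{ptp-one-sided-out}, and Remark \ref{pos-neg}. Your version is somewhat more explicit---you invoke Proposition \ref{graph} to produce the primitive $a$ whose realization crosses $e_0$, and you spell out why the telescoping works for $f'$---but the argument is the same.
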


\begin{proof}
Without loss, say $\varphi$ is an NEG-automorphism, and let $a \in F_N$ be as per Lemma \ref{red-edge-neg}.  Then by Corollary \ref{marked-P} and Corollary \ref{ptp-one-sided-out} $\varphi$ has $\mathcal{P}^+(\mathcal{A},a)$ where $\mathcal{A}$ is a free basis of $F_N$ with $a \in \mathcal{A}$.  Corollary \ref{marked-P}, Corollary \ref{ptp-one-sided-out} and the topological representative constructed in Lemma \ref{red-neg-inverse} ensure that $\varphi$ has $\mathcal{P}^-(\mathcal{A},a)$ (see Remark \ref{neg}).  Thus $\varphi$ has $\mathcal{P} = \mathcal{P}(\mathcal{A},a)$.

\end{proof}

\section{The Case of $F_2$}\label{The Case of F2}

We now handle the case where $N=2$.  As mentioned in Section \ref{Preliminaries}, Cohen, Lustig, and Steiner \cite{MR1105334} showed that no finite subset of $F_2$ is spectrally rigid.  In fact, their argument works for any $\Sigma \subseteq F_2$ (possibly infinite) which satisfies a property, denoted $\mathcal{W}^*$, which we define below.

\begin{defn}\label{Ws}
Let $\Sigma \subseteq F_2$.  We say $\Sigma$ has property $\mathcal{W}^*$ if there is a free basis $F_2 = F(a,b)$ and $k \geq 1$ so that for all $\sigma \in \Sigma$, we have
\begin{enumerate}
\item $[[\sigma]]_{{\{a,ba^k\}}} \not \in \langle ba^k \rangle$.
\item If $(ba^k)^t \Subset [[\sigma]]_{{\{a,ba^k\}}}$, then $t = \pm1$. 
\end{enumerate}
\end{defn}

In view of the argument in \cite{MR1105334}, the following is immediate.

\begin{prop}\label{CLS}
Suppose $\Sigma \subseteq F_2$ has property $\mathcal{W}^*$.  Then $\Sigma$ is not spectrally rigid.
\end{prop}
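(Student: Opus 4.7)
The plan is to follow the construction of Cohen, Lustig, and Steiner \cite{MR1105334} essentially verbatim, replacing the finite set in their setup by the given $\Sigma$. Their argument produces two distinct points $T_1, T_2 \in \cv_2$ whose translation length functions agree on every word of a particular combinatorial type, and Property $\mathcal{W}^*$ is tailored so that every $\sigma \in \Sigma$ falls into precisely this type. The key observation, already invoked by Kapovich \cite{2010arXiv1001.1729K} for the analogous Smillie--Vogtmann argument that gave Proposition \ref{SV}, is that nothing in the CLS construction actually requires $\Sigma$ to be finite.

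First I would fix the free basis $\{a, b\}$ of $F_2$ and the integer $k \geq 1$ guaranteed by Definition \ref{Ws}. Following \cite{MR1105334}, I would construct $T_1$ as a marked metric rose whose marking realizes the basis $\{a, b\}$ and $T_2$ as a marked metric rose whose marking realizes the basis $\{a, ba^k\}$, with edge lengths taken from the explicit CLS one-parameter family. Because Definition \ref{Spectrally rigid} treats $\cv_2$ rather than $\CV_2$, we are free to let the two graphs have different volumes; this extra freedom is precisely what makes the construction work in rank $2$ (as discussed in Section \ref{Spectral Rigidity}), and it allows $T_1 \neq T_2$ in $\cv_2$.

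Next I would verify, for each $\sigma \in \Sigma$, that $\|\sigma\|_{T_1} = \|\sigma\|_{T_2}$. Writing $[[\sigma]]_{\{a, ba^k\}}$ as $a^{n_1}(ba^k)^{\epsilon_1} \cdots a^{n_p}(ba^k)^{\epsilon_p}$, condition (2) of $\mathcal{W}^*$ forces each $\epsilon_i \in \{\pm 1\}$ and condition (1) ensures $p \geq 1$ together with the presence of at least one nonzero $n_i$. The translation length $\|\sigma\|_{T_2}$ is then immediately $l_a'\sum_i |n_i| + l'_{ba^k}\, p$ for the edge lengths $l_a', l'_{ba^k}$ of $T_2$; re-expanding the $(ba^k)^{\pm 1}$ blocks into the basis $\{a, b\}$ and tracking cancellations between consecutive blocks yields an analogous formula for $\|\sigma\|_{T_1}$. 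The CLS choice of edge lengths is calibrated precisely so that these two expressions coincide as functions of the data $(n_i, \epsilon_i)$.

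The main obstacle is the bookkeeping of boundary cancellation between consecutive blocks $(ba^k)^{\epsilon_i} a^{n_{i+1}} (ba^k)^{\epsilon_{i+1}}$ when passing from the $\{a, ba^k\}$-form to the $\{a, b\}$-form: when $\epsilon_i = -\epsilon_{i+1}$ the would-be collapse $(ba^k)(ba^k)^{-1}$ is blocked only by $a^{n_{i+1}}$, and when $\epsilon_i = \epsilon_{i+1}$ the absence of the forbidden subword $(ba^k)^{\pm 2}$ requires $n_{i+1} \neq 0$. Both of these are exactly the regularity guaranteed by $\mathcal{W}^*$, so the $\{a, b\}$-cyclic reduction of $\sigma$ has the clean alternating form on which the CLS length identity depends. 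Once this is in place, the equality $\|\sigma\|_{T_1} = \|\sigma\|_{T_2}$ is exactly the identity established in \cite{MR1105334}, and $\Sigma$ is not spectrally rigid.
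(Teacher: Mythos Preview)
Your proposal is correct and takes essentially the same approach as the paper: the paper's entire proof is the single sentence ``In view of the argument in \cite{MR1105334}, the following is immediate,'' and you have simply spelled out what that sentence means---namely that Property~$\mathcal{W}^*$ is precisely the combinatorial constraint under which the Cohen--Lustig--Steiner length identity holds word-by-word, with no finiteness required. One small imprecision: condition~(1) of Definition~\ref{Ws} does not by itself force $p \geq 1$ (the pure powers $\sigma = a^m$ have $p = 0$ and still satisfy $\mathcal{W}^*$), but this case is harmless in the CLS construction and your overall outline is unaffected.
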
 \qed

The following proposition allows us to extend our earlier arguments to the case of $F_2$.

\begin{prop}\label{WW}
Suppose $\Sigma \subseteq F_2$ has property $\mathcal{W}$.  Then $\Sigma$ has property $\mathcal{W}^*$.
\end{prop}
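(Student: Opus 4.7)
The plan is to reuse the basis from Property $\mathcal{W}$ directly, simply with a very large Nielsen-transvection exponent. Say Property $\mathcal{W}$ is witnessed on $\Sigma$ by the basis $\{a,b\}$ of $F_2$, distinguished letter $a$, and bound $M$. Set $k := M+1$ and $c := ba^k$; since $\{a,c\}$ is obtained from $\{a,b\}$ by a right transvection it is again a free basis of $F_2$, and I will verify Definition \ref{Ws} for this $k$.

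Fix a nontrivial $\sigma \in \Sigma$ and write $[[\sigma]]_{\{a,b\}}$ as a cyclic word of alternating nontrivial $a$- and $b$-blocks,
\[
a^{i_1} b^{j_1} a^{i_2} b^{j_2} \cdots a^{i_r} b^{j_r}, \quad i_s, j_s \neq 0.
\]
Property $\mathcal{W}$ gives $|i_s| \leq M < k$ for every $s$. Now substitute $b \mapsto ca^{-k}$ and $b^{-1} \mapsto a^k c^{-1}$ and reduce freely and cyclically in the basis $\{a,c\}$. A short case analysis on the four sign-pair possibilities for $(j_s, j_{s+1})$, read cyclically, shows that after reduction every pair of adjacent $c^{\pm 1}$-letters in $[[\sigma]]_{\{a,c\}}$ is separated by an $a$-power whose exponent lies in the set $\{\pm k,\ i_{s+1} \pm k,\ \pm i_{s+1}\}$. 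Since $0 < |i_{s+1}| \leq M < k$, each such exponent is nonzero, so no $c^{\pm t}$ with $|t| \geq 2$ can occur as a cyclic subword. This establishes clause (2) of Definition \ref{Ws}.

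For clause (1), suppose toward a contradiction that $[[\sigma]]_{\{a,c\}} \in \langle c \rangle$. Then $[[\sigma]]_{\{a,c\}} = c^n$ for some $n \neq 0$, whence $[[\sigma]]_{\{a,b\}} = (ba^k)^n$ in cyclically reduced form, a word that plainly contains $a^{\pm k}$ as a cyclic subword. This contradicts Property $\mathcal{W}$ because $k > M$. A handful of degenerate configurations (for instance $\sigma \in \langle a \rangle$, or a cyclic reduction consisting solely of $b$-blocks) are dispatched directly using the same exponent bounds.

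The main obstacle is the case analysis in clause (2): although each of the four sign-pair sub-cases is elementary, one has to be careful about which $a$-power actually survives between consecutive $c$-letters at the boundary of two $b$-blocks and, especially, at the cyclic junction where the last $b$-block meets the first. The key numerical input is simply $|i_s| \leq M < k$, which guarantees no unintended cancellation creates a $c^{\pm 2}$.
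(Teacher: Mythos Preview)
Your proof is correct and follows essentially the same route as the paper: reuse the basis $\{a,b\}$ witnessing Property $\mathcal{W}$, pick $k>M$, rewrite $b=(ba^k)a^{-k}$, and use $|i_s|\le M<k$ to ensure that every pair of consecutive $c^{\pm1}$-letters in $[[\sigma]]_{\{a,ba^k\}}$ is separated by a nontrivial $a$-power, while clause~(1) is handled by the same contradiction. Your sign-pair case analysis is in fact slightly more explicit than the paper's version, which only records the key inequality $|k_i^a-k|\neq0$.
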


\begin{proof}
Let $\mathcal{A}, a \in \mathcal{A}$ and $M$ be as per property $\mathcal{W}$.  Write $\mathcal{A} = \{a,b\}$.  Choose $k > M$.  We first verify property 1 of Definition \ref{Ws}.  Suppose $[[\sigma]]_{\{ a , ba^k \}} = (ba^k)^t$ for $t \neq 0$.  Then $[[\sigma]]_{\mathcal{A}}$ contains $a^k$ (or $a^{-k}$) as a subword.  As $k > M$, this violates property $\mathcal{W}$.  Thus $[[\sigma]]_{{\{a,ba^k\}}} \not \in \langle ba^k \rangle$.  Note that if $[\sigma]_{c \mathcal{A}} = a^l$, then $[\sigma]_{c \{ a,ba^k \}} = a^l$, and so condition 2 is vacuously satisfied.  So write $[[\sigma]]_{\mathcal{A}} = a^{k_0^a} b^{k_0^b} \cdots a^{k_{n-1}^a} b^{k_{n-1}^b}$ where $k_{n-1}^b \neq 0$.  Then
\[
[[\sigma]]_{\{ a,ba^k \}} = a^{k_0^a} ((ba^k)(a^{-k}))^{k_0^b} \cdots a^{k_{n-1}^a} ((ba^k)(a^{-k}))^{k_{n-1}^b}
\]
Property $\mathcal{W}$ gives that $k_i^a \leq M$ for all $1 \leq i \leq n-1$.  Since $k > M$, we have that $|k_i^a - k| \neq 0$ for all $i$.  Thus we conclude that a nontrivial power of $a$ remains between successive occurrences of $ba^k$.  Thus we have property 2 of Definition \ref{Ws}.
\end{proof}

\section{Main Theorem}\label{Main Theorem}

We can now prove Theorem \ref{thma}, whose statement we recall for convenience.

\begin{thmaa}
Let $N \geq 2$.  Let $\Phi \in \Aut(F_N)$.  Let $g \in F_N$ be arbitrary.  Then the set $\Sigma = \{ \Phi^n(g) \}_{n \in \mathbb{Z}}$ is not spectrally rigid.
\end{thmaa}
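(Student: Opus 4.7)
The plan is to combine the case-by-case analyses from Sections \ref{Fully Irreducible Automorphisms}--\ref{NEG-Automorphisms} into a single dichotomy and then transfer from Property $\mathcal{P}$ to non-spectral-rigidity via Properties $\mathcal{W}$ and (if $N=2$) $\mathcal{W}^*$. Set $\varphi := [\Phi] \in \Out(F_N)$. My first step would be to observe that $\varphi$ automatically falls into one of the two hypotheses already treated: by Theorem \ref{BFH5.1.5}, some positive iterate of $\varphi$ admits a good relative train track whose top stratum is either exponentially growing or non-exponentially growing (zero strata are prohibited at the top). Hence either at least one of $\varphi, \varphi^{-1}$ is an NEG-automorphism---in which case Theorem \ref{red-neg-main} applies---or neither is, in which case both are EG-automorphisms and Theorem \ref{red-eg-main} applies. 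In either case, $\varphi$ has Property $\mathcal{P}$.

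Next, I would apply Proposition \ref{PW} to $\Phi$ and the given $g \in F_N$ to conclude that $\Sigma = \{ \Phi^n(g) \}_{n \in \mathbb{Z}}$ has Property $\mathcal{W}$. The final step splits on the rank. For $N \geq 3$, Proposition \ref{SV} (the Smillie--Vogtmann construction, as refined by Kapovich) immediately yields that $\Sigma$ is not spectrally rigid. For $N = 2$, I would first invoke Proposition \ref{WW} to upgrade Property $\mathcal{W}$ to Property $\mathcal{W}^*$, and then conclude via Proposition \ref{CLS} (the Cohen--Lustig--Steiner construction, in its volume-varying form).

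There is essentially no obstacle at this stage, since Theorem A is a clean assembly of the machinery developed in the preceding sections: the EG case is handled by the existence of a primitive element whose realizations cross both top strata (Lemma \ref{red-prim}) combined with the edge-level bounds of Lemma \ref{red-edge-bound} and Lemma \ref{red-edge}; the NEG case is handled by the observation that top-stratum edge occurrences do not proliferate under iteration, together with the homotopy-inverse construction of Lemma \ref{red-neg-inverse}. The only subtle point is the $N = 2$ case, where one must route through the auxiliary property $\mathcal{W}^*$ because the Cohen--Lustig--Steiner argument perturbs edge lengths in a volume-changing way rather than in the volume-preserving way of Smillie--Vogtmann; Proposition \ref{WW} ensures this routing succeeds for any $\Sigma$ with Property $\mathcal{W}$.
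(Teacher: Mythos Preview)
Your proposal is correct and follows essentially the same approach as the paper's own proof: the dichotomy on whether at least one of $\varphi, \varphi^{-1}$ is NEG, the invocation of Theorems \ref{red-eg-main} and \ref{red-neg-main} to obtain Property $\mathcal{P}$, and the passage through Propositions \ref{PW}, \ref{SV}, \ref{WW}, and \ref{CLS} all match exactly. Your explicit justification of why the dichotomy is exhaustive (via Theorem \ref{BFH5.1.5} and the prohibition of zero top strata) is a welcome clarification that the paper leaves implicit.
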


\begin{proof}[Proof of Theorem \ref{thma}]
Suppose that $N \geq 2$ and let $\Phi \in \Aut(F_N)$ be arbitrary.  Let $[\Phi] = \varphi \in \Out(F_N)$.  If neither $\varphi$ nor $\varphi^{-1}$ is an NEG-automorphism, then $\varphi$ has $\mathcal{P}$ by Theorem \ref{red-eg-main}.  If at least one of $\varphi$ or $\varphi^{-1}$ is an NEG-automorphism, then $\varphi$ has $\mathcal{P}$ by Theorem \ref{red-neg-main}.  Thus $\varphi$ has $\mathcal{P}$.  Therefore, by Proposition \ref{PW}, for any $\Psi \in \varphi$ and for any $g \in F_N$ the set $\Sigma = \{ \Psi^n(g) \}_{n \in \mathbb{Z}}$ has $\mathcal{W}$.  Since $\Phi \in \varphi$, the set $\Sigma = \{ \Phi^n(g) \}_{n \in \mathbb{Z}}$ has $\mathcal{W}$.   If $N \geq 3$, then by Proposition \ref{SV}, $\Sigma$ is not spectrally rigid.  If $N=2$, then by Proposition \ref{WW}, $\Sigma$ has $\mathcal{W}^*$.  Thus by Proposition \ref{CLS}, $\Sigma$ is not spectrally rigid.
\end{proof}

\end{document}